\setlist[itemize]{leftmargin=*}
\theoremstyle{plain}
\numberwithin{equation}{subsection}
\newtheorem{theorem}[equation]{Theorem}
\newtheorem{corollary}[equation]{Corollary}
\newtheorem{lemma}[equation]{Lemma}
\theoremstyle{definition}
\newtheorem{example}[equation]{Example}
\newcommand{\highlight}[1]{{\color{blue} #1}}
\newcommand{\Q}{\mathbb{Q}}
\newcommand{\N}{\mathbb{N}}
\newcommand{\K}{\mathbb{K}}
\newcommand{\Z}{\mathbb{Z}}
\newcommand{\C}{\mathbb{C}}
\newcommand{\li}{\operatorname{li}}
\newcommand{\Li}{\operatorname{Li}}
\renewcommand{\pmod}[1]{\,\,(\operatorname{mod}#1)}
\let\oldenumerate=\enumerate
	\def\enumerate{
	\oldenumerate
	\setlength{\itemsep}{5pt}
	}
\let\olditemize=\itemize
	\def\itemize{
	\olditemize
	\setlength{\itemsep}{5pt}
	}
\begin{document}

\title[The Bateman--Horn Conjecture]{The Bateman--Horn Conjecture:  Heuristics, History, and Applications}

	\author[S.L.~Aletheia-Zomlefer]{Soren Laing Aletheia-Zomlefer}

\author[L.~Fukshansky]{Lenny Fukshansky}
\address{Department of Mathematics,
Claremont McKenna College,
850 Columbia Ave,
Claremont, CA 91711,
USA}
\email{lenny@cmc.edu}

	\author[S.R.~Garcia]{Stephan Ramon Garcia}
	\address{Department of Mathematics, Pomona College, 610 N. College Ave., Claremont, CA 91711} 
	\email{stephan.garcia@pomona.edu}
	\urladdr{\url{http://pages.pomona.edu/~sg064747}}
	
\thanks{SRG supported by a David L.~Hirsch III and Susan H.~Hirsch Research Initiation Grant, the Institute for Pure and Applied Mathematics (IPAM) Quantitative Linear Algebra program, and NSF Grant DMS-1800123. LF supported by the Simons Foundation grant \#519058.}

\subjclass[2010]{11N32, 11N05, 11N13}

\keywords{prime number, polynomial, Bateman--Horn conjecture, primes in arithmetic progressions, Landau's conjecture, twin prime conjecture, Ulam spiral}

\begin{abstract}
The Bateman--Horn conjecture is a far-reaching statement about the distribution of the prime numbers.  It implies many known results, such as the prime number theorem and the Green--Tao theorem, along with many famous conjectures, such the twin prime conjecture and Landau's conjecture.  We discuss the Bateman--Horn conjecture, its applications, and its origins.  
\end{abstract}

\maketitle

\def\A{{\mathcal A}}
\def\AA{{\mathfrak A}}
\def\B{{\mathcal B}}
\def\C{{\mathcal C}}
\def\D{{\mathcal D}}
\def\EE{{\mathfrak E}}
\def\F{{\mathcal F}}
\def\G{{\mathcal G}}
\def\x{{\mathcal H}}
\def\I{{\mathcal I}}
\def\II{{\mathfrak I}}
\def\J{{\mathcal J}}
\def\kk{{\mathfrak K}}
\def\L{{\mathcal L}}
\def\LL{{\mathfrak L}}
\def\M{{\mathcal M}}
\def\mm{{\mathfrak m}}
\def\MM{{\mathfrak M}}
\def\O{{\mathcal O}}
\def\OO{{\mathfrak O}}
\def\P{{\mathbb P}}
\def\PP{{\mathfrak p}}
\def\W{{\mathcal W}}
\def\PNR{{\mathcal P_N(\real)}}
\def\PMNR{{\mathcal P^M_N(\real)}}
\def\PdNR{{\mathcal P^d_N(\real)}}
\def\s{{\mathcal S}}
\def\V{{\mathcal V}}
\def\X{{\mathcal X}}
\def\Y{{\mathcal Y}}
\def\H{{\mathcal H}}
\def\cee{{\mathbb C}}
\def\Nn{{\mathbb N}}
\def\pee{{\mathbb P}}
\def\Q{{\mathbb Q}}
\def\QQ{{\mathbb Q}}
\def\real{{\mathbb R}}
\def\RR{{\mathbb R}}
\def\Z{{\mathbb Z}}
\def\ZZ{{\mathbb Z}}
\def\aaa{{\mathbb A}}
\def\ff{{\mathbb F}}
\def\HDelta{\emph{\Delta}}
\def\kk{{\mathfrak K}}
\def\qbar{{\overline{\mathbb Q}}}
\def\kbar{{\overline{K}}}
\def\ybar{{\overline{Y}}}
\def\kkbar{{\overline{\mathfrak K}}}
\def\ubar{{\overline{U}}}
\def\eps{{\varepsilon}}
\def\ahat{{\hat \alpha}}
\def\bhat{{\hat \beta}}
\def\k{{\nu}}
\def\gt{{\tilde \gamma}}
\def\h{{\tfrac12}}
\def\be{{\boldsymbol e}}
\def\bei{{\boldsymbol e_i}}
\def\bc{{\boldsymbol c}}
\def\bdt{{\boldsymbol \delta}}
\def\bff{{\boldsymbol f}}
\def\bm{{\boldsymbol m}}
\def\bk{{\boldsymbol k}}
\def\bi{{\boldsymbol i}}
\def\bl{{\boldsymbol l}}
\def\bq{{\boldsymbol q}}
\def\bu{{\boldsymbol u}}
\def\bt{{\boldsymbol t}}
\def\bs{{\boldsymbol s}}
\def\bv{{\boldsymbol v}}
\def\bw{{\boldsymbol w}}
\def\bx{{\boldsymbol x}}
\def\bbx{{\overline{\boldsymbol x}}}
\def\bX{{\boldsymbol X}}
\def\bz{{\boldsymbol z}}
\def\bwy{{\boldsymbol y}}
\def\bY{{\boldsymbol Y}}
\def\bL{{\boldsymbol L}}
\def\ba{{\boldsymbol a}}
\def\bb{{\boldsymbol b}}
\def\bet{{\boldsymbol\eta}}
\def\bxi{{\boldsymbol\xi}}
\def\bo{{\boldsymbol 0}}
\def\bone{{\boldsymbol 1}}
\def\bol{{\boldsymbol 1}_L}
\def\ep{\varepsilon}
\def\p{\boldsymbol\varphi}
\def\q{\boldsymbol\psi}
\def\rank{\operatorname{rank}}
\def\aut{\operatorname{Aut}}
\def\lcm{\operatorname{lcm}}
\def\sgn{\operatorname{sgn}}
\def\spn{\operatorname{span}}
\def\md{\operatorname{mod}}
\def\Norm{\operatorname{Norm}}
\def\dim{\operatorname{dim}}
\def\det{\operatorname{det}}
\def\Vol{\operatorname{Vol}}
\def\rk{\operatorname{rk}}
\def\ord{\operatorname{ord}}
\def\ker{\operatorname{ker}}
\def\div{\operatorname{div}}
\def\Gal{\operatorname{Gal}}
\def\GL{\operatorname{GL}}
\def\SNR{\operatorname{SNR}}
\def\WR{\operatorname{WR}}
\def\IWR{\operatorname{IWR}}
\def\scg{\operatorname{\langle  \Gamma \rangle }}
\def\swrh{\operatorname{Sim_{WR}(\Lambda_h)}}
\def\ch{\operatorname{C_h}}
\def\cht{\operatorname{C_h(\theta)}}
\def\scgt{\operatorname{\langle  \Gamma_{\theta} \rangle }}
\def\scgmn{\operatorname{\langle  \Gamma_{m,n} \rangle }}
\def\gat{\operatorname{\Omega_{\theta}}}
\def\Obar{\operatorname{\overline{\Omega}}}
\def\Lbar{\operatorname{\overline{\Lambda}}}
\def\mn{\operatorname{mn}}
\def\disc{\operatorname{disc}}
\def\rot{\operatorname{rot}}
\def\Prob{\operatorname{Prob}}
\def\co{\operatorname{co}}
\def\ot{\operatorname{o_{\tau}}}
\def\Aut{\operatorname{Aut}}
\def\Mat{\operatorname{Mat}}
\def\SL{\operatorname{SL}}
\def\id{\operatorname{id}}

\tableofcontents

\section{Introduction}
\label{intro}

Given a collection of polynomials with integer coefficients, how often should we expect their values at integer arguments to be simultaneously prime? This general question 
subsumes a large number of different directions and investigations in analytic number theory. 
A comprehensive answer is proposed by the famous Bateman--Horn conjecture, first formulated by Paul T.~Bateman and Roger A.~Horn in 1962 \cite{Bateman, BatemanHorn-2}. This conjecture is a far-reaching statement about the distribution of the prime numbers.  Many well-known theorems, such as the prime number theorem and the Green--Tao theorem, follow from it. The conjecture also implies a variety of unproven conjectures, such as the twin prime conjecture and Landau's conjecture.  We hope to convince the reader
that the Bateman--Horn conjecture deserves to be ranked among the Riemann hypothesis and $abc$-conjecture as one of the most important unproven conjectures in number theory.

The amount of literature related to the Bateman--Horn conjecture is large: MathSciNet, for example, shows over 100 citations to the original Bateman--Horn papers in which the conjecture was formulated. Somewhat surprisingly, however, we did not find many expository accounts besides a short note by Serge Lang~\cite{lang_bh} with just a quick overview of the conjecture.  It is a goal of this paper to provide a detailed exposition of the conjecture and some of its ramifications. 
We assume no knowledge beyond elementary undergraduate number theory.  We introduce
the necessary algebraic and analytic prerequisites as need arises.
We do not attempt a comprehensive survey of all the literature related to the Bateman--Horn conjecture. 
For example, recent variations of the conjecture, say to multivariate polynomials \cite{moroz,Destagnol} or to polynomial rings over finite fields \cite{Conrad, Conrad2},
are not treated here.

The organization of this paper is as follows. Section~\ref{Section:Preliminaries} introduces asymptotic equivalence, the logarithmic integral, and the prime number theorem.  In Section~\ref{Section:Heuristic}, we go through a careful heuristic argument based upon the Cram\'er model that explains 
most of the key restrictions and predictions of the Bateman--Horn conjecture.  
Before proceeding to various examples and applications of the conjecture, Section~\ref{Section:History} revisits some of the historical background.  In particular, we include many personal recollections
of Roger Horn that have never before been published.

One of the main features of the Bateman--Horn conjecture is an explicit constant in the main term of the asymptotic formula for the number of integers below a given threshold at which a collection of polynomials simultaneously assume prime values. The expression for this constant, however, is complicated and involves an infinite product. It is nontrivial to see that this product converges and we are not aware of a detailed proof of this fact anywhere in the literature.  The original Bateman--Horn paper sketches the main idea of this proof, but omits almost all of the details. We present this argument in detail in Section~\ref{Section:Converge}.

Section~\ref{Section:Single} is devoted to a number of important instances and consequences of the single polynomial case of the conjecture, while ramifications of the multiple polynomial case are discussed in Section \ref{Section:Multiple}. Finally, we discuss some limitations of the Bateman--Horn conjecture in Section \ref{Section:Limitations}. With this brief introduction, we are now ready to proceed.

\medskip\noindent\textbf{Acknowledgments}.  
We thank 
Keith Conrad for many technical corrections,
Harold G.~Diamond for permitting us to use two photographs of Paul Bateman,
Jeff Lagarias for several suggestions about the exposition,
Florian Luca for introducing us to the Bateman--Horn conjecture,
and Hugh Montgomery for his remarks about Bateman.
We especially thank Roger A.~Horn for supplying us with his extensive recollections and several photographs,
and for many comments on an initial draft of this paper.
Special thanks goes to the anonymous referee for suggesting dozens of improvements to the exposition.

\medskip\noindent\textbf{Disclaimer}.
This paper originally appeared on the \texttt{arXiv} under the title ``One conjecture to rule them all:  Bateman--Horn'' (\url{https://arxiv.org/abs/1807.08899}).  

\section{Preliminaries}\label{Section:Preliminaries}
We will often need to compare the rate of growth of two real-valued functions of a real variable as their arguments tend to infinity.  To this end, we require a bit of notation.  Readers familiar with asymptotic equivalence, Big-$O$ and little-$o$ notation, and the prime number theorem should proceed to Section \ref{Section:Heuristic}. A good source of information on classical analytic number theory is \cite{luca}.

\subsection{Asymptotic equivalence}
In what follows, we assume that
$f(x)$ and $g(x)$ are continuous, 
real-valued functions that are defined and nonzero for sufficiently large $x$.
We write $f \sim g$ to mean that
\begin{equation}\label{eq:Asymptotic}
\lim_{x\to\infty} \frac{f(x)}{g(x)} = 1.
\end{equation}
We say that $f$ and $g$ are \emph{asymptotically equivalent} when this occurs.
The limit laws from calculus show that $\sim$ is an equivalence relation; we use this
fact freely.

Two polynomials are asymptotically equivalent if and only if they have the same degree and 
the same leading coefficient.  For example, $2x^2 \sim 2x^2 + x + 1$ since 
\begin{equation*}
\lim_{x\to\infty} \frac{2x^2+x+1}{2x^2} = \lim_{x\to\infty} \left(1 + \frac{1}{x} + \frac{1}{x^2} \right) = 1.
\end{equation*}
It is important to note, however, that asymptotic equivalence does not necessarily mean
that ``$f$ and $g$ are close together'' in the sense that $f-g$ is small.
Although $2x^2 \sim 2x^2+x+1$, their difference
$(2x^2+x+1)-2x^2 = x+1$ is unbounded.

\subsection{Big-$O$ and little-$o$ notation}\label{Section:O}
When we write $f(x) = O(g(x))$, we mean that there is a constant $C$ such that
$|f(x)|\leq C|g(x)|$ for sufficiently large $x$.  For example,
\begin{equation*}
4x^2+7x \log x = O(x^2)
\qquad \text{and}\qquad
\sin x = O(1).
\end{equation*}
What is the relationship between Big-$O$ notation and asymptotic equivalence?
If $f\sim g$, then $f(x) = O(g(x))$ and $g(x) = O(f(x))$.  Indeed, \eqref{eq:Asymptotic} and the definition of limits ensures that 
$|f(x)| \leq 2 |g(x)|$ and $|g(x)| \leq 2|f(x)|$ for sufficiently large $x$ (the number $2$ in the preceding inequalities can be replaced by any constant greater than $1$).
On the other hand, $2x = O(x)$ and $x = O(2x)$, although $x$ and $2x$ are not asymptotically equivalent. Hence the statement ``$f \sim g$" is stronger than the statement ``$f(x) = O(g(x))$ and $g(x) = O(f(x))$", but both of these statements are asymptotic in their nature.

We say $f(x) = o(g(x))$ if
\begin{equation*}
\lim_{x \to \infty} \frac{f(x)}{g(x)} = 0.
\end{equation*}
For instance, $x = o(x^2)$ as $x \to \infty$. Notice that if $f \sim g$, then
\begin{equation*}
1 = \lim_{x \to \infty} \frac{f(x)}{g(x)} = \lim_{x \to \infty} \frac{f(x) - g(x) + g(x)}{g(x)} =  \lim_{x \to \infty} \frac{f(x) - g(x)}{g(x)} + 1,
\end{equation*}
and so $\lim_{x \to \infty} \frac{f(x) - g(x)}{g(x)} = 0$.  Thus,
the error term satisfies $|f(x) - g(x)| = o(g(x))$. On the other hand, the assertion that $f(x) = O(g(x))$ and $g(x) = O(f(x))$ does not guarantee a smaller order error term.
Indeed, $x = O(2x)$ and $2x = O(x)$, but $|x-2x| = |x|$ is not $o(x)$ or $o(2x)$.

\subsection{The logarithmic integral}
In the theory of prime numbers
the \emph{offset logarithmic integral}\footnote{The function \eqref{eq:Li} is a close relative of the standard 
\emph{logarithmic integral} $\operatorname{li}(x)$,
in which the lower limit of integration in \eqref{eq:Li} is $0$ and the singularity 
at $x=1$ is avoided by using a Cauchy principal value.  Since we are interested in large $x$, we use \eqref{eq:Li} instead.} 
\begin{equation}\label{eq:Li}
\Li(x) = \int_2^x \frac{dt}{\log t}
\end{equation}
and its close relatives frequently arise.  Here $\log t$ denotes the natural logarithm\footnote{The
notation $\ln t$ may be more familiar to calculus students.} of $t$.
Unfortunately, the integral \eqref{eq:Li} cannot be evaluated in closed form.
As a consequence, it is convenient to replace $\Li(x)$ and its relatives (see Figure \ref{Figure:LogIntegrals})
with simpler functions that are asymptotically equivalent.

\begin{figure}
      \centering
      \includegraphics[width=0.75\textwidth]{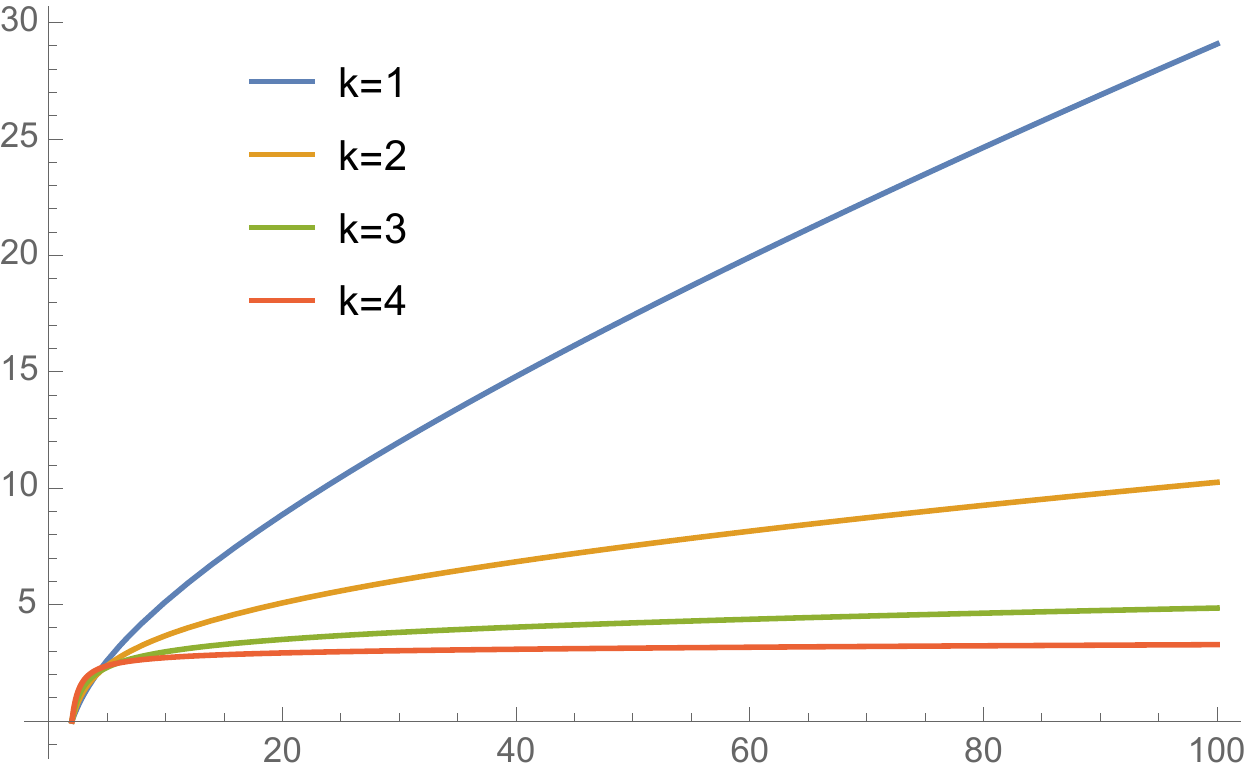}
      \caption{The functions $\displaystyle\int_2^x \frac{dt}{(\log t)^k}$ for $k=1,2,3,4$ for $x \geq 2$.}
      \label{Figure:LogIntegrals}
\end{figure}

\begin{lemma}\label{Lemma:Log}
$\displaystyle \int_2^x \frac{dt}{(\log t)^k} \sim \frac{x}{(\log x)^k}$
for $k=1,2,\ldots$.
\end{lemma}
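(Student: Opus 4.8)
The plan is to prove the limit relation directly with L'Hôpital's rule, applied to the quotient
\[
\frac{\displaystyle\int_2^x \frac{dt}{(\log t)^k}}{\displaystyle \frac{x}{(\log x)^k}} .
\]
First I would verify that this is an $\infty/\infty$ indeterminate form as $x \to \infty$, which is what the $\infty/\infty$ version of L'Hôpital requires. The denominator $x/(\log x)^k$ plainly tends to infinity. For the numerator, note that $\log t \le \log x$ for $2 \le t \le x$, so the integrand satisfies $(\log t)^{-k} \ge (\log x)^{-k}$; integrating gives the lower bound
\[
\int_2^x \frac{dt}{(\log t)^k} \ge \frac{x-2}{(\log x)^k} \to \infty ,
\]
confirming that the numerator diverges as well.

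Next I would carry out the differentiation. By the fundamental theorem of calculus, the derivative of the numerator is simply $1/(\log x)^k$. For the denominator, the product rule gives
\[
\frac{d}{dx}\,\frac{x}{(\log x)^k} = \frac{1}{(\log x)^k} - \frac{k}{(\log x)^{k+1}} = \frac{\log x - k}{(\log x)^{k+1}} .
\]
Forming the ratio of these derivatives and simplifying yields
\[
\frac{(\log x)^{-k}}{(\log x - k)(\log x)^{-(k+1)}} = \frac{\log x}{\log x - k},
\]
which tends to $1$ as $x \to \infty$. Since this limit of the ratio of derivatives exists, L'Hôpital's rule lets me conclude that the original quotient also tends to $1$, which is exactly the asserted asymptotic equivalence.

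Honestly, the computation here is routine, so there is no serious obstacle; the only point demanding a moment's care is the bookkeeping needed to legitimately invoke L'Hôpital. Specifically, one must check that the denominator's derivative $(\log x - k)(\log x)^{-(k+1)}$ is nonzero for all sufficiently large $x$, which holds as soon as $\log x > k$, and that the numerator indeed diverges (handled by the lower bound above). An alternative route via integration by parts produces the recursion
\[
\int_2^x \frac{dt}{(\log t)^k} = \frac{x}{(\log x)^k} - \frac{2}{(\log 2)^k} + k\int_2^x \frac{dt}{(\log t)^{k+1}},
\]
but extracting the asymptotics from this would still require showing the higher-power integral is of smaller order, so I would prefer the cleaner L'Hôpital argument and treat all values of $k=1,2,\ldots$ uniformly in one stroke.
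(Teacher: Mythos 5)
Your argument is correct and is essentially the same as the paper's: both apply L'H\^opital's rule to the quotient $\bigl(\int_2^x (\log t)^{-k}\,dt\bigr)\big/\bigl(x/(\log x)^k\bigr)$, use the fundamental theorem of calculus to differentiate the numerator, and simplify the resulting ratio to $1/(1-k/\log x)\to 1$. The extra bookkeeping you include (verifying the $\infty/\infty$ form and the nonvanishing of the denominator's derivative) is a harmless refinement of the same proof.
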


\begin{proof}
L'H\^opital's rule and the fundamental theorem of calculus imply that
\begin{equation*}
\lim_{x\to\infty} \frac{  \int_2^x \frac{dt}{(\log t)^k} }{x/(\log x)^k}
\overset{L}{=} \lim_{x\to\infty} \frac{1/(\log x)^k}{1/(\log x)^k - k/ (\log x)^{k+1}} 
= \lim_{x\to\infty}  \frac{1}{1 - k/\log x} 
=1.\qedhere
\end{equation*}
\end{proof}

One can be a little more precise than Lemma \ref{Lemma:Log}.
Integration by parts provides:
\begin{equation*}
\Li(x) = \frac{x}{\log x} + O\left( \frac{x}{(\log x)^2} \right)
\end{equation*}
and
\begin{equation*}
\int_2^x \frac{dt}{(\log t)^k} = \frac{x}{(\log x)^k} + O\left( \frac{x}{(\log x)^{k+1}} \right).
\end{equation*}

\subsection{Prime number theorem}\label{Section:PNT}
The first signpost toward the Bateman--Horn conjecture is the prime number theorem,
which describes the gross distribution of the primes.
Let $\pi(x)$ denote the number of primes at most $x$.
For example, $\pi(10.5) = 4$ since $2,3,5,7 \leq 10.5$.
The following result was  proved independently by Hadamard and de la Vall\'ee Poussin in 1896;
see Figure \ref{Figure:PNT_Li}.

\begin{figure}
    \centering
    \begin{subfigure}[t]{0.475\textwidth}
        \centering
        \includegraphics[width=\textwidth]{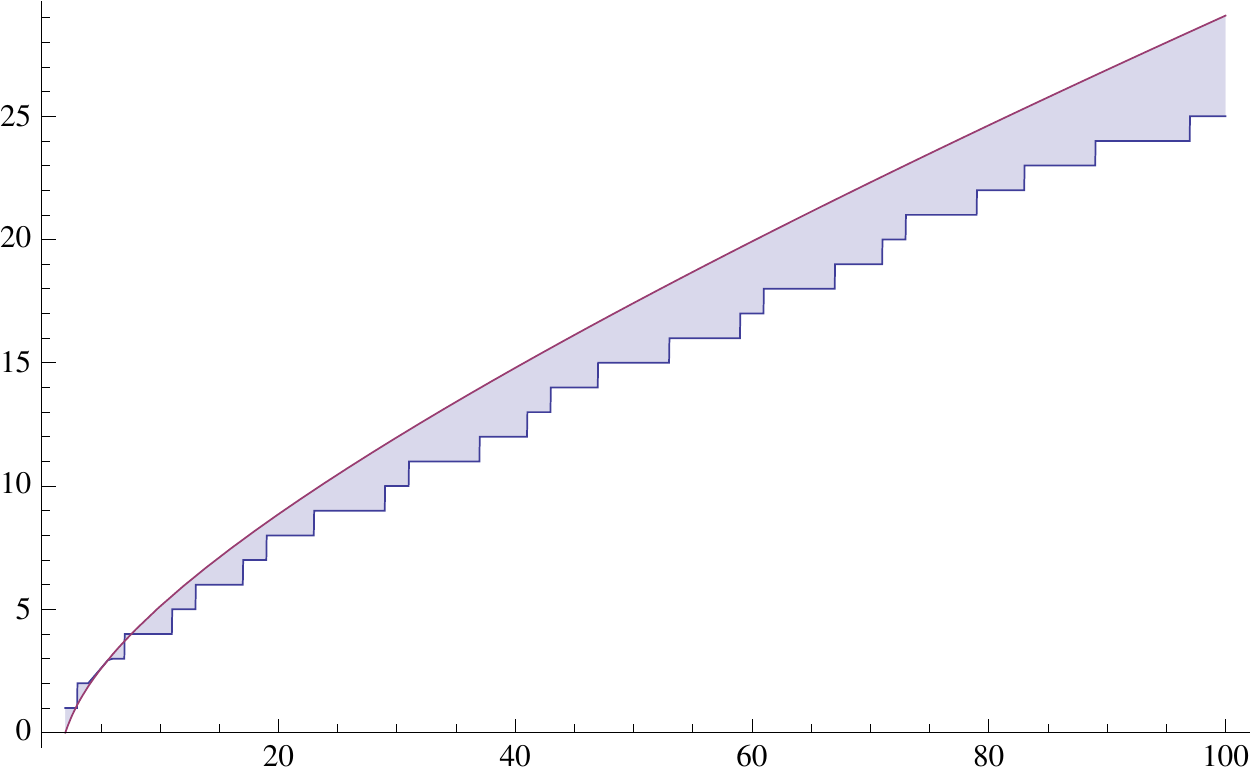}
        \caption{$x \leq 100$}
    \end{subfigure}
    \begin{subfigure}[t]{0.475\textwidth}
        \centering
        \includegraphics[width=\textwidth]{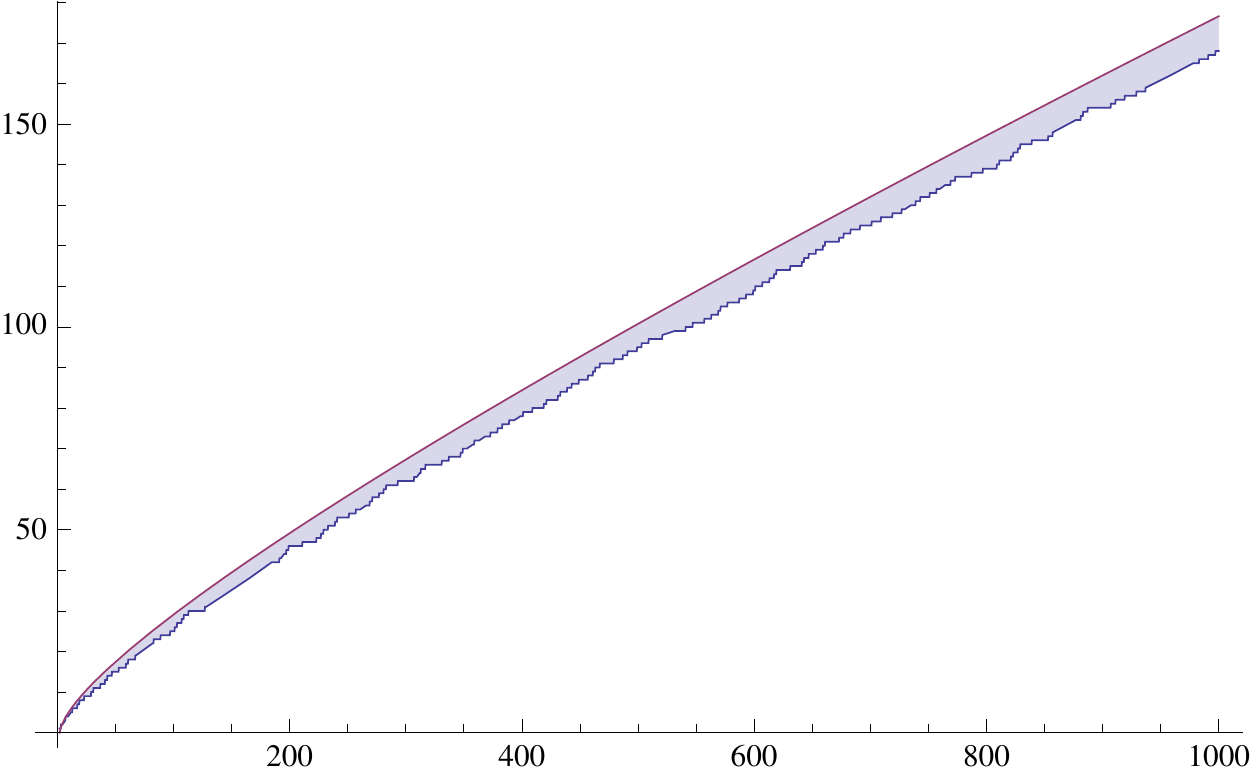}
        \caption{$x \leq 1{,}000$}
    \end{subfigure}
\\
    \begin{subfigure}[t]{0.475\textwidth}
        \centering
        \includegraphics[width=\textwidth]{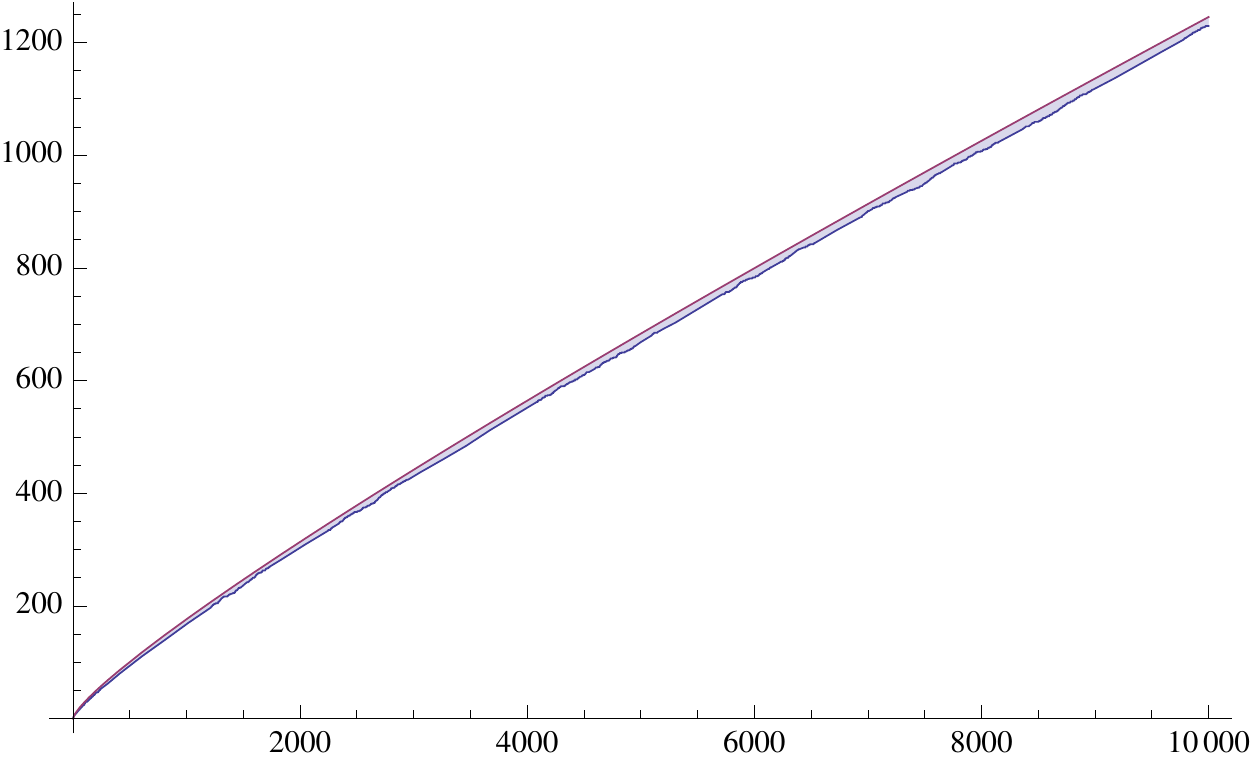}
        \caption{$x \leq 10{,}000$}
    \end{subfigure}
    \begin{subfigure}[t]{0.475\textwidth}
        \centering
        \includegraphics[width=\textwidth]{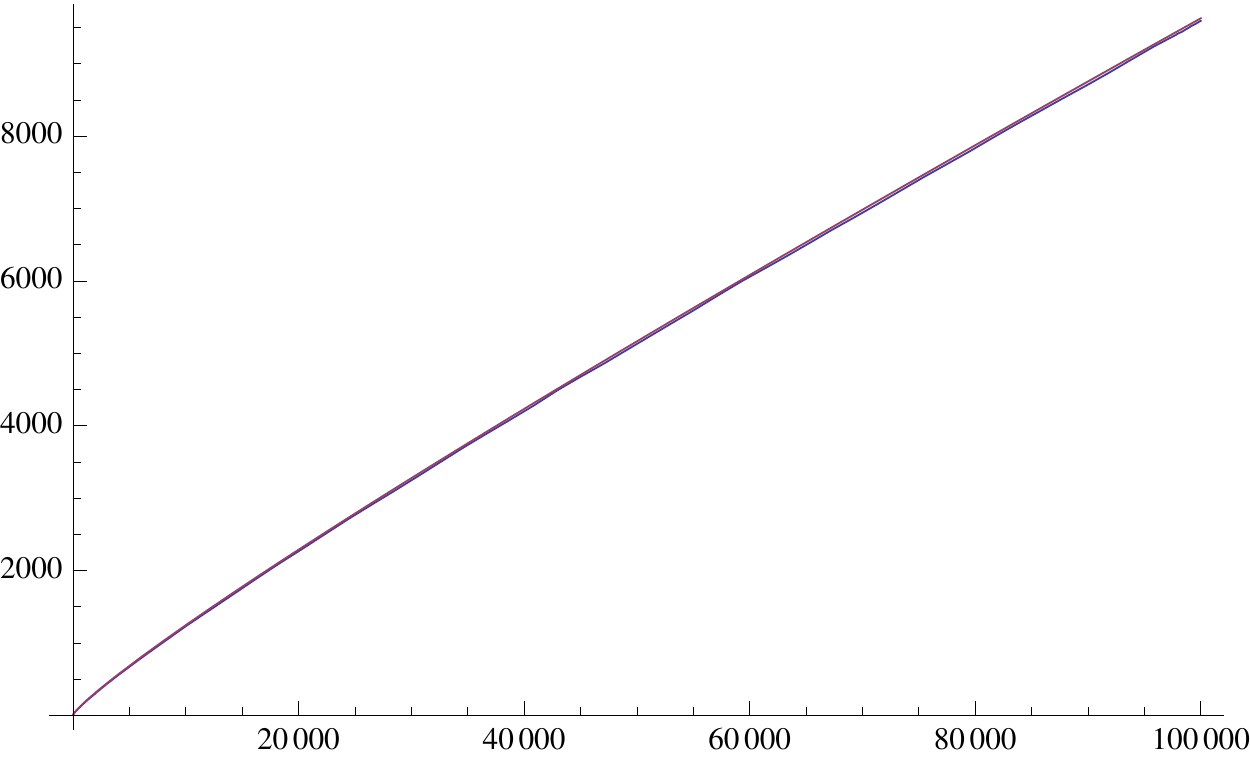}
        \caption{$x \leq 100{,}000$}
    \end{subfigure}
    \caption{Graphs of $\operatorname{Li}(x)$ versus $\pi(x)$ on various scales. }
    \label{Figure:PNT_Li}
\end{figure}

\begin{theorem}[Prime Number Theorem]\label{Theorem:PNT}
$\displaystyle \pi(x) \sim \Li(x)$.
\end{theorem}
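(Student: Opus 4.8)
The plan is to replace the statement about $\pi(x)$ by an equivalent one about Chebyshev's function, prove that, and then transfer back. First I would record the reductions. By the integration-by-parts estimate $\Li(x) = x/\log x + O(x/(\log x)^2)$ noted above, we have $\Li(x) \sim x/\log x$, so it suffices to prove $\pi(x) \sim x/\log x$. Introducing the functions
\[
\theta(x) = \sum_{p \le x} \log p \qquad\text{and}\qquad \psi(x) = \sum_{p^k \le x} \log p,
\]
an Abel summation argument shows that the three assertions $\pi(x) \sim x/\log x$, $\theta(x) \sim x$, and $\psi(x) \sim x$ are all equivalent; a crude estimate shows $0 \le \psi(x) - \theta(x) = O(\sqrt{x}\,\log x)$, which handles the passage between $\theta$ and $\psi$. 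Thus the problem reduces to proving $\psi(x) \sim x$.

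Next I would bring in the Riemann zeta function $\zeta(s) = \sum_{n \ge 1} n^{-s}$, which converges for $\operatorname{Re} s > 1$ and extends meromorphically to $\operatorname{Re} s > 0$ with a single simple pole at $s=1$. The bridge to $\psi$ is the logarithmic-derivative identity
\[
-\frac{\zeta'(s)}{\zeta(s)} = \sum_{n=1}^\infty \frac{\Lambda(n)}{n^s}, \qquad \operatorname{Re} s > 1,
\]
where $\Lambda$ is the von Mangoldt function, so that $\psi(x) = \sum_{n \le x} \Lambda(n)$ is exactly the summatory function of the Dirichlet coefficients of $-\zeta'/\zeta$. The asymptotic $\psi(x) \sim x$ is therefore a statement about the average size of these coefficients, to be extracted from the analytic behavior of $-\zeta'/\zeta$ near the line $\operatorname{Re} s = 1$.

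The crucial analytic input, and the step I expect to be the main obstacle, is that $\zeta(s)$ has no zeros on the line $\operatorname{Re} s = 1$. Granting this, $-\zeta'/\zeta$ is holomorphic on an open set containing $\operatorname{Re} s \ge 1$ save for a simple pole of residue $1$ at $s=1$, and a Tauberian theorem of Wiener--Ikehara (or Newman) type converts this analytic information into $\psi(x) \sim x$. The non-vanishing itself rests on the elementary inequality
\[
3 + 4\cos\varphi + \cos 2\varphi = 2(1+\cos\varphi)^2 \ge 0,
\]
applied to $\log|\zeta(\sigma)^3\,\zeta(\sigma+it)^4\,\zeta(\sigma+2it)|$: letting $\sigma \to 1^+$ shows that a zero at $1+it$ would clash with the simple pole at $s=1$.

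Finally I would reverse the reductions: $\psi(x) \sim x$ yields $\theta(x) \sim x$, Abel summation gives $\pi(x) \sim x/\log x$, and hence $\pi(x) \sim \Li(x)$. The delicate points throughout are analytic rather than combinatorial, namely establishing the meromorphic continuation and pole structure of $\zeta$, proving the boundary non-vanishing, and justifying the Tauberian passage from the Dirichlet series to its partial sums.
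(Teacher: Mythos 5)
The paper does not prove this theorem at all: it is stated as the classical result of Hadamard and de la Vall\'ee Poussin (1896) and used as a black box, so there is no internal proof to compare yours against. Your outline is the standard analytic proof and is correct as a roadmap: the reduction $\Li(x) \sim x/\log x$ matches the integration-by-parts estimate the paper itself records; the equivalences $\pi(x) \sim x/\log x \iff \theta(x) \sim x \iff \psi(x) \sim x$ via Abel summation and the bound $\psi(x)-\theta(x) = O(\sqrt{x}\,\log x)$ are right; the identity $-\zeta'/\zeta = \sum \Lambda(n) n^{-s}$, the non-vanishing of $\zeta$ on $\operatorname{Re} s = 1$ via $3+4\cos\varphi+\cos 2\varphi \ge 0$, and the Wiener--Ikehara/Newman Tauberian step are exactly the classical chain of reasoning. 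You should be aware, though, that what you have written is a proof \emph{architecture} rather than a proof: the three items you yourself flag at the end (meromorphic continuation of $\zeta$ past $\operatorname{Re} s = 1$, the boundary non-vanishing, and especially the Tauberian theorem) each require a page or more of genuine analysis, and the Tauberian step in particular is where all the difficulty of the theorem is concentrated. Invoking Newman's contour-integration argument or Wiener--Ikehara by name is legitimate in an expository context like this paper's, but if the goal were a self-contained proof you would still owe the reader that entire argument. Within that caveat, there is no gap in the logic you have laid out.
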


Although $\Li(x) \sim x/\log x$,
the logarithmic integral provides a more accurate approximation to $\pi(x)$;
see Table \ref{Table:LogInt}.  For simplicity, we work now with the approximation $\pi(x) \sim x/\log x$ and develop
a probabilistic model of the prime numbers that will guide our progress toward the
Bateman--Horn conjecture \cite{TaoCramer}.

\begin{table}
    \begin{tabular}{|c||c|c|c|}
    \hline
    $x$ & $\pi(x)$ & $\operatorname{Li}(x)$ & $x / \log x$ \\
    \hline
    1000 & 168 & 177 & 145 \\
    10{,}000 & 1{,}229 &  1{,}245 & 1{,}086 \\
    100{,}000 &  9{,}592 & 9{,}629 & 8{,}686 \\
    1{,}000{,}000 & 78{,}498 & 78{,}627 & 72{,}382\\
    10{,}000{,}000 & 664{,}579 & 664{,}917 & 620{,}421 \\
    100{,}000{,}000 & 5{,}761{,}455 & 5{,}762{,}208 & 5{,}428{,}681 \\
    1{,}000{,}000{,}000 & 50{,}847{,}534 & 50{,}849{,}234 & 48{,}254{,}942\\
    10{,}000{,}000{,}000 & 455{,}052{,}511 & 455{,}055{,}614 & 434{,}294{,}482 \\
    100{,}000{,}000{,}000 & 4{,}118{,}054{,}813 & 4{,}118{,}066{,}400 &  3{,}948{,}131{,}654 \\
    1{,}000{,}000{,}000{,}000 & 37{,}607{,}912{,}018 & 37{,}607{,}950{,}280 & 36{,}191{,}206{,}825 \\
    \hline
    \end{tabular}
    \smallskip
    \caption{The logarithmic integral $\operatorname{Li}(x)$ is a better approximation to the prime counting function
    $\pi(x)$ than is $x / \log x$.  The entries in the table have been rounded to the nearest integer.}
    \label{Table:LogInt}
\end{table}

For fixed $c > 0$ and large $x$, the prime number theorem tells us to expect about
\begin{equation*}
\frac{x+c x }{\log (x+c x)} - 
\frac{x-c x }{\log (x-c x )} 
\ \sim \ \frac{2c x }{\log x}
\end{equation*}
primes in the interval $[x-c x ,x+c x ]$.  
Dividing by the length $2c x$ of the interval, it follows that
the probability that a natural number in the vicinity of $x$ is prime is roughly $1/\log x$.
We use this repeatedly as a guide in our heuristic arguments.

\section{A heuristic argument}\label{Section:Heuristic}

Now that we know about the gross distribution of the primes, it is natural to ask about
the distribution of primes of certain forms.  For example, are there infinitely many primes of the form $n^2+1$?
This was asked at the 1912 International Congress of Mathematicians by Edmund Landau (1877--1938) 
and remains open today.\footnote{Although commonly known as Landau's conjecture, its first appearance is in a 1752 letter from
Leonhard Euler (1707--1783) to Christian Goldbach (1690--1764) \cite[p.2-3]{Euler}.}  

\subsection{A single polynomial}
We let $\Z[x]$ denote the set of polynomials in $x$ with coefficients in $\Z$, the set of integers.  We denote by $\N$ the set
$\{1,2,\ldots\}$ of natural numbers.
For $f \in \Z[x]$, we define
\begin{equation*}
Q(f;x)\, =\, \#\{n \leq x: \text{$f(n)$ is prime}\},
\end{equation*}
in which $\#S$ denotes the number of elements of a set $S$.
We investigate some conditions that $f$ must satisfy if it is to generate infinitely many distinct primes.
To avoid trivialities, suppose that $f$ is nonconstant and that $Q(f;x)\to\infty$.

\begin{itemize}
\item \textbf{Leading coefficient.}
The degree of $f$, denoted $\deg f$, must be at least one.
Moreover, the leading coefficient of $f$ must be positive.

\item \textbf{Irreducible.}
We claim that $f$ is irreducible; that is, it cannot be factored as a product
of two polynomials in $\Z[x]$, neither of which is $\pm 1$.\footnote{Gauss' lemma
ensures that a primitive $f \in \Z[x]$ is irreducible in $\Z[x]$ if and only if it is irreducible in $\Q[x]$, the ring of polynomials with rational coefficients \cite[Prop.~5, p.~303]{DF}.}  
Suppose that $f = gh$, in which $g,h \in \Z[x]$.  Without loss of generality, we may assume that the leading coefficients of $g$ and $h$ are positive.
Then $g(n) = 1$ or $h(n) = 1$ for infinitely many $n$ since $f$ assumes prime values infinitely often.
Consequently, $g-1$ or $h-1$ is a polynomial with infinitely many roots and hence $g$ or $h$ is identically $1$.
Thus, $f$ is irreducible.

\item \textbf{Nonvanishing modulo every prime.}
A nonconstant $f \in \Z[x]$ may be irreducible, yet fail to be prime infinitely often.
For example, $f(x) = x^2+x +2$ is irreducible, but $f(n)$ is divisible by $2$ for all $n \in \Z$.
Similarly, $f(x) = x^3-x+3$ is irreducible and
\begin{equation*}
x^3 -x + 3 \equiv x^3 - x \equiv x(x-1)(x+1) \equiv 0 \pmod{3},
\end{equation*}
so $f(n)$ is divisible by $3$ for all $n \in \Z$. 
Thus, we must insist that $f$ does not vanish identically modulo any prime.
\end{itemize}

\subsection{Effect of the degree.}
Suppose that $f \in \Z[x]$ is nonconstant, irreducible, and does not vanish identically modulo any prime.
Let $d = \deg f$ and suppose that $f$ has a positive leading
coefficient, $c$.  Then $f(x) \sim cx^d$ and our heuristic from
the prime number theorem suggests that the probability that  $f(n)$ is prime is about
\begin{equation}\label{eq:OneLog}
\frac{1}{\log f(n)} \sim \frac{1}{\log (cx^d)} = \frac{1}{d \log x + \log c} \sim \frac{1}{d \log x}.
\end{equation}
This suggests that 
\begin{equation}\label{eq:WrongQ}
Q(f;x)
\ \sim \ \sum_{n=2}^{\lfloor x \rfloor} \frac{1}{d \log x} 
\ \sim \ 
\frac{1}{\deg f} \int_2^x \frac{dt}{\log t}.
\end{equation}
Is this correct?  We should do some computations to see whether this pans out.

\subsection{A sanity check}

Consider the polynomial 
\begin{equation*}
f(x) = x^2+1,
\end{equation*}
which is nonconstant, irreducible, and has a positive leading coefficient.
Since $f(0) = 1$, it follows that $f$ does not vanish identically modulo any prime.
Landau's conjecture is that $f$ assumes infinitely many prime values; that is,
$Q(f;x) \to \infty$.

According to \eqref{eq:WrongQ}
\begin{equation}\label{eq:Wrong}
Q(f;N) \ \sim \  \frac{1}{2} \Li(N).
\end{equation}
However, the numerical evidence disagrees; see Table \ref{Table:Disagree}.
On the positive side, the loss is not total since it appears that our estimate is only
off by a constant factor.  What is this constant factor and where does it come from?

\begin{table}
\begin{equation*}
\begin{array}{c|ccc}
N & Q(f;N) & \frac{1}{2}\Li(N) & Q(f;N) / \frac{1}{2}\Li(N) \\
\hline
100  & 19 &  15 & 1.3067\\
1{,}000 & 112 & 88  & 1.26866 \\
10{,}000 & 841 & 623 & 1.3509\\
100{,}000 &  6{,}656 & 4{,}814 & 1.38252\\
1{,}000{,}000 & 54{,}110 & 39{,}313 & 1.37638\\
10{,}000{,}000 & 456{,}362 & 332{,}459 &  1.37269\\
100{,}000{,}000 & 3{,}954{,}181 & 2{,}881{,}104  & 1.37245\\
1{,}000{,}000{,}000 & 34{,}900{,}213 & 25{,}424{,}617 & 1.37269\\
\end{array}
\end{equation*}
\caption{The estimate \eqref{eq:Wrong} is clearly incorrect.  However, the ratio
between the correct answer and our prediction appears to converge slowly to a constant
(the value of $\frac{1}{2}\Li(N)$ is rounded to the nearest integer for display purposes).}
\label{Table:Disagree}
\end{table}

\subsection{Making a correction}
We were too quick to celebrate the fact that $f$ does not vanish identically modulo any prime.  
Our prediction needs to take into account how likely it is that $f(n) \equiv 0 \pmod{p}$.
For example, $f(n) \equiv n+1 \pmod{2}$ and hence $f(n)$ is even with probability $\frac{1}{2}$.

If we assume for the sake of our heuristic argument that divisibility by distinct primes $p$ and $q$
are independent events, then we should weight our prediction by\footnote{An important convention we adhere to throughout this paper is that 
the letter $p$ always denotes a prime number.
A product or sum indexed by $p$ indicates that that product or sum runs over all prime numbers.}
\begin{equation}\label{eq:BadProduct}
\prod_p \left( 1 - \frac{\omega_f(p)}{p} \right),
\end{equation}
in which $\omega_f(p)$ is the number of solutions to $f(x) \equiv 0 \pmod{p}$.  

However, there is a problem.
The constant factor suggested by Table \ref{Table:Disagree}, approximately $1.372$, is greater than one, whereas \eqref{eq:BadProduct}
is not.
Therefore, the preceding analysis cannot be correct.
More seriously, there are convergence issues with \eqref{eq:BadProduct}; 
see Section \ref{Section:Product} for information about infinite products.

We need to weight the factors in \eqref{eq:BadProduct}
against the probabilities that randomly selected integers are not divisible by $p$.
This suggests that we adjust \eqref{eq:Wrong} by
\begin{equation}\label{eq:CfLandau}
C(f) \ = \ \prod_p \left( 1 - \frac{1}{p} \right)^{-1}
\left(1- \frac{\omega_f(p)}{p} \right) \ = \
\prod_p \frac{p - \omega_f(p)}{p-1}.
\end{equation}
Does this agree with our numerical computations?
To compute $\omega_f(p)$, we need to count the number of solutions to
$x^2 +1 \equiv 0 \pmod{p}$.
Since $-1$ is a square modulo $p$ if and only if $p = 2$ or $p \equiv 1 \pmod{4}$ \cite{Niven}, 
\begin{equation*}
\omega_f(p) = 
\begin{cases}
1 & \text{if $p=2$},\\
2 & \text{if $p \equiv 1 \pmod{4}$},\\
0 & \text{if $p \equiv 3 \pmod{4}$}.
\end{cases}
\end{equation*}
The hundred millionth partial product of \eqref{eq:CfLandau} yields 
\begin{equation*}
C(f)\  \approx \ 1.37281,
\end{equation*}
which agrees with the data in Table \ref{Table:Disagree}.
In particular, this suggests
an affirmative answer to Landau's problem.

Let us pause to summarize the discussion so far.
For a single polynomial $f$, we suspect that
\begin{equation*}
Q(f;x)\  \sim \ \frac{C(f)}{\deg f} \int_2^x \frac{dt}{\log t},
\end{equation*}
in which
\begin{equation}\label{eq:C1Var}
C(f) \ = \ \prod_p \left( 1 - \frac{1}{p} \right)^{-1}
\left(1- \frac{\omega_f(p)}{p} \right).
\end{equation}
This is the Bateman--Horn conjecture for a single polynomial.
What about families of multiple polynomials?

\subsection{More than one polynomial}
Suppose that $f_1,f_2,\ldots,f_k \in \Z[x]$ are distinct irreducible polynomials with positive leading coefficients.
The same reasoning in \eqref{eq:OneLog} tells us the probability that 
all of the $f_i(n)$ are prime is
\begin{equation*}
\prod_{i=1}^k \frac{1}{\log f_i(n)}\ \sim \ \prod_{i=1}^k \frac{1}{d_i \log n} \ = \ \frac{1}{(\prod_{i=1}^k \deg f_i) (\log n)^k}. 
\end{equation*}
Thus, the expected number of $n$ at most $x$ for which $f_1(n), f_2(n),\ldots,f_k(n)$ are simultaneously 
prime is around
\begin{equation*}
\int_2^x \frac{1}{(\prod_{i=1}^k \deg f_i) (\log n)^k} \ = \ \frac{1}{\prod_{i=1}^k \deg f_i} \int_2^x \frac{dt}{(\log t)^k}.
\end{equation*}
As before, we must amend this with a suitable correction factor.

Although perhaps no single $f_i$ vanishes identically modulo a prime,
these polynomials might conspire to make 
\begin{equation}\label{eq:ffffk}
f = f_1 f_2 \cdots f_k
\end{equation}
vanish identically modulo some prime.
For example, neither $f_1(x) = x$ nor $f_2(x) = x-1$ vanish identically modulo a prime, although their
product $f(x) = x(x-1)$ vanishes identically modulo $2$.  This ``congruence obstruction'' prevents
$n$ and $n+1$ from being simultaneously prime infinitely often.
Consequently, we must require that $f$ does not vanish identically modulo any prime.

With $f$ as in \eqref{eq:ffffk}, one final adjustment to \eqref{eq:C1Var} is necessary.  
Instead of dividing by $1 - 1/p$ in \eqref{eq:C1Var}, we must now divide by $(1-1/p)^k$, the probability that
a randomly selected $k$-tuple of integers has no element divisible by $p$.

\subsection{The Bateman--Horn conjecture}
The preceding heuristic deductions make a compelling argument in favor of the following conjecture.

\bigskip\noindent\fbox{\begin{minipage}{0.975\textwidth}
\medskip\noindent\textbf{Bateman--Horn Conjecture.}
\emph{Let $f_1,f_2,\ldots,f_k \in \Z[x]$ be distinct irreducible polynomials with positive leading coefficients, and let
\begin{equation}\label{eq:Q}
Q(f_1,f_2,\ldots,f_k;x) \ =\  \#\{ n \leq x : \text{$f_1(n),f_2(n),\ldots,f_k(n)$ are prime}\}.
\end{equation}
Suppose that $f = f_1f_2\cdots f_k$ does not vanish identically modulo any prime.
Then
\begin{equation}\label{eq:BH}
Q(f_1,f_2,\ldots,f_k;x)\ \sim\  \frac{C(f_1,f_2,\ldots,f_k) }{\prod_{i=1}^k \deg f_i} 
\int _{2}^{x}\frac{dt}{(\log t)^k},
\end{equation}
in which 
\begin{equation}\label{eq:C}
C(f_1,f_2,\ldots,f_k) \ = \ \prod_p \left( 1 - \frac{1}{p} \right)^{-k}
\left(1- \frac{\omega_f(p)}{p} \right)
\end{equation}
and $\omega_f(p)$ is the number of solutions to $f(x) \equiv 0 \pmod{p}$.
}
\end{minipage}}
\bigskip

Under the hypotheses of the Bateman--Horn conjecture,
the infinite product \eqref{eq:C} always converges.  However, the proof is
delicate and nontrivial; see Section \ref{Section:Converge} for the details. 

The only case of the Bateman--Horn conjecture that has been proven is the
prime number theorem for arithmetic progressions (Theorem \ref{Theorem:PNTAP}). 
However, an upper bound similar to \eqref{eq:BH} is known to be true.
The Brun sieve provides a 
constant $B$ that depends only on $k$ and the degrees of the polynomials involved such that
\begin{equation*}
Q(f_1,f_2,\ldots,f_k;x) \, \leq \, \frac{B\,C(f_1,f_2,\ldots,f_k) }{\prod_{i=1}^k \deg f_i} \int_2^x \frac{dt}{(\log t)^k}
\end{equation*}
for sufficiently large $x$ \cite[Thm.~3, Sect.~I.4.2]{Tenenbaum}.  Thus, the prediction afforded by the Bateman--Horn
conjecture is not unreasonably large.

\section{Historical background}\label{Section:History}

Before proceeding to applications and examples of the Bateman--Horn conjecture,
we first discuss its historical context.  In particular, we briefly examine several important antecedents
that the conjecture generalizes.  We are  fortunate to have available the personal recollections
of Roger A.~Horn, who was kind enough to provide his account of the events leading up to the formulation
of the conjecture.

\subsection{Predecessors of the conjecture}\label{Section:Prehistory}

The Bateman--Horn conjecture is the culmination of hundreds of years of theorems
and conjectures about the large-scale distribution of the prime numbers \cite{Guy}.
In Section \ref{Section:Heuristic} we arrived at the conjecture from the prime number theorem
and heuristic reasoning based upon the Cram\'er probabilistic model of the primes (see \cite{granville}
for a nice exposition of this model). Although this is easy to do in hindsight, in reality the Bateman--Horn 
conjecture evolved naturally from a family of interrelated conjectures, all of which remain open.
We state these conjectures in modern terminology and with our present notation for the sake
of uniformity and clarity.

\medskip\noindent\textbf{Bunyakovsky conjecture \cite{Bouniakowsky} (1854)}:
\emph{Suppose that $f \in \Z[x]$ is irreducible, $\deg f \geq 1$, the leading coefficient of $f$ is positive,
and the sequence $f(1),f(2),\ldots$ is relatively prime.  Then $f(n)$ is prime infinitely often.}
\medskip

This conjecture, which concerns prime values assumed by a single polynomial,
was proposed by Viktor Yakovlevich Bunyakovsky (1804--1889).  It implies, for example,
Landau's conjecture on the infinitude of primes of the form $n^2+1$.
The condition that $f(1),f(2),\ldots$ is relatively prime is equivalent to the assumption that
$f$ does not vanish identically modulo any prime, which appears in the Bateman--Horn conjecture.
Dirichlet's theorem on primes in arithmetic progressions (1837) is the degree-one case of the Bunyakovsky conjecture.

\medskip\noindent\textbf{Dickson's conjecture \cite{Dickson} (1904)}:
\emph{If $f_1,f_2,\ldots,f_k \in \Z[x]$ are of the form $f_i(x) = a_i x + b_i$, 
in which each $a_i$ is positive, and there is no congruence obstruction,
then $f_1(n), f_2(n),\ldots,f_k(n)$ are simultaneously prime infinitely often.}
\medskip

This was conjectured by Leonard Eugene Dickson (1874--1954)
as an extension of Dirichlet's theorem.
By a ``congruence obstruction'' we mean that the $f_1,f_2,\ldots,f_k$ are not prevented
from assuming infinitely many prime values by some combination of congruences.  For example,
$f_1(x) = x+3$, $f_2(x) = x+7$, and $f_3(x) = x-1$ are congruent modulo $3$ to
$x$, $x+1$, and $x+2$, respectively.  For each $n \in \N$,
at least one of $f_1(n), f_2(n), f_3(n)$ is divisible by three.  Since these polynomials
are nonconstant, this prevents them from being simultaneously prime infinitely often.

\medskip\noindent\textbf{First Hardy--Littlewood Conjecture \cite{Hardy} (1923)}:
\emph{
Let $0 < m_1 < m_2 < \cdots < m_k$.  Unless there is a congruence obstruction,
the number of primes $q\leq x$ such that $q+2m_1, q+2m_2,\ldots,q+2m_k$ are prime is
asymptotic to
\begin{equation*}
2^k \prod_{\text{$p$ odd}} \left( 1 - \frac{1}{p} \right)^{-(k+1)}\left(1 - \frac{w(p;m_1,m_2,\ldots,m_k)}{p} \right) \int_2^x \frac{dt}{(\log t)^{k+1}},
\end{equation*}
in which $w(p;m_1,m_2,\ldots,m_k)$ is the number of distinct residues
of $0,m_1,m_2,\ldots,m_k$ modulo $p$.}
\medskip

Unlike the conjectures of Bunyakovsky and Dickson, the first Hardy--Littlewood conjecture
provides an asymptotic expression for the number of primes of a given form.  
It is a special case of the Bateman--Horn conjecture with
\begin{equation*}
f_1(x) = x, \quad  f_2(x) = x+2m_1, \ldots, \quad f_{k+1}(x) = x + 2m_k.
\end{equation*}
There are $k+1$ polynomials involved, which accounts for the power $k+1$ that appears
in the product and the integrand.

The classic paper \cite{Hardy} of Hardy and Littlewood is full of conjectures, labeled ``Conjecture A'' through ``Conjecture P.''
Most of these are subsumed under what is now known as the First Hardy--Littlewood conjecture, which we have just stated.
Hardy and Littlewood end their paper with the remark:
\begin{quote}\small
We trust that it will not be supposed that we attach any exaggerated importance to the speculations which we 
have set out in this last section. We have not forgotten that in pure mathematics, and in the Theory of Numbers in particular, 
`it is only proof that counts'. It is quite possible, in the light of the history of the subject, that the whole of 
our speculations may be ill-founded. Such evidence as there is points, for what it is worth, in the 
opposite direction. In any case it may be useful that, finding ourselves in possession of an apparently fruitful method, 
we should develop some of its consequences to the full, even where accurate investigation is beyond our powers.
\end{quote}
At least one of their conjectures is ``ill-founded.''
The second Hardy--Littlewood conjecture asserts that $\pi(x+y) \leq \pi(x) + \pi(y)$ for $x \geq 2$.
In 1974, Douglas Hensley and Ian Richards proved that the second conjecture is incompatible with the first;
see \cite{Hensley1}, as well as \cite{Hensley2,Richards}.
It is not known which of the two conjectures is true, although most number theorists favor the first.

\medskip\noindent\textbf{Schinzel's Hypothesis H \cite{Schinzel} (1958)}: 
\emph{Let $f_1,f_2,\ldots,f_k$ be distinct irreducible, integer-valued polynomials that have
positive leading coefficients. 
If for each prime $p$ there exists an $m \in \N$ such that none of the values $f_1(m),f_2(m),\ldots,f_k(m)$ are divisible by $p$, then 
there are infinitely many $n \in \N$ such that $f_1(n),f_2(n),\ldots,f_k(n)$ are prime.}
\medskip

This general qualitative predecessor of the Bateman--Horn conjecture
was formulated by Andrzej Schinzel (1937--) in 1958. At that time, Schinzel was a student of Wac\l{}aw Sierpi\'nski (1882--1969) at Warsaw University, and the hypothesis was first stated in its general form in their joint paper~\cite{Schinzel}. In fact, Schinzel wrote the reviews of the two papers
of Bateman and Horn \cite{Bateman, BatemanHorn-2} for Mathematical Reviews and Zentralblatt MATH, the main reviewing services of the American and European Mathematical Societies, respectively. 

The Bateman--Horn conjecture is a quantitative version of hypothesis H.
The hypotheses of both conjectures are essentially the same.
The condition that for each prime $p$ there exists an integer $m$ 
such that none of the values $f_1(m),f_2(m),\ldots,f_k(m)$ are divisible by $p$ is equivalent to the condition that the product 
$f_1f_2 \cdots f_k$ does not vanish identically modulo any prime. 

The Bateman--Horn conjecture unifies all of the conjectures above in one bold prediction.
It provides an asymptotic expression for the relevant counting function
and, moreover, its predictions agree well with numerical computation.
We will chronicle many consequences of the conjecture in Sections \ref{Section:Single} and \ref{Section:Multiple}.

\subsection{Bateman, Horn, and the ILLIAC}

Paul T. Bateman (1919--2012) earned his Ph.D.~in 1946 under Hans Rademacher 
(1892--1969) at the University of
Pennsylvania.  He joined the mathematics department of the University of Illinois, Urbana-Champaign
in 1950 and stayed there until his retirement in 1989, after which he was Professor Emeritus.
He was department head from 1965 until 1980 and is credited by many for his leadership,
incredible memory, and work ethic.
Harold G.~Diamond \cite{Diamond} tells us 
\begin{quote}\small
Paul is perhaps best known to the number theory community for the Bateman--Horn conjectural asymptotic formula for the number of 
$k$-tuples of primes generated by systems of polynomials\ldots
Their formula extended and quantified several famous conjectures of Hardy and J.E.~Littlewood, and of Andrzej Schinzel, 
and they illustrated its quality with calculations. This topic has been treated in dozens of subsequent papers.
\end{quote}
Hugh Montgomery adds
\begin{quote}\small
Bateman not only organized an active number theory group in Urbana, with such people as John Selfridge, Walter Philipp,
Harold Diamond, and Heini Halberstam, but he also did a lot to promote number theory around the country, and also he did a huge amount of service to the AMS.
Later, when Batemen died, he didn't get all the honor and credit he deserved. He had lived so long, that the (comparatively young) editor of the AMS Notices had
no idea who Bateman had been. He insisted on just a very short (1 page or so) obituary, so many of the reminiscences never saw the light of day. Harold Diamond
may still have drafts of what we wanted to publish.  \cite{MontgomeryPersonal}
\end{quote}
Fortunately, it appears that Diamond was able to publish much of the desired memorial tribute online \cite{Diamond}.

\begin{figure}
    \centering
    \begin{subfigure}[t]{0.525\textwidth}
        \centering
        \includegraphics[height=2.25in]{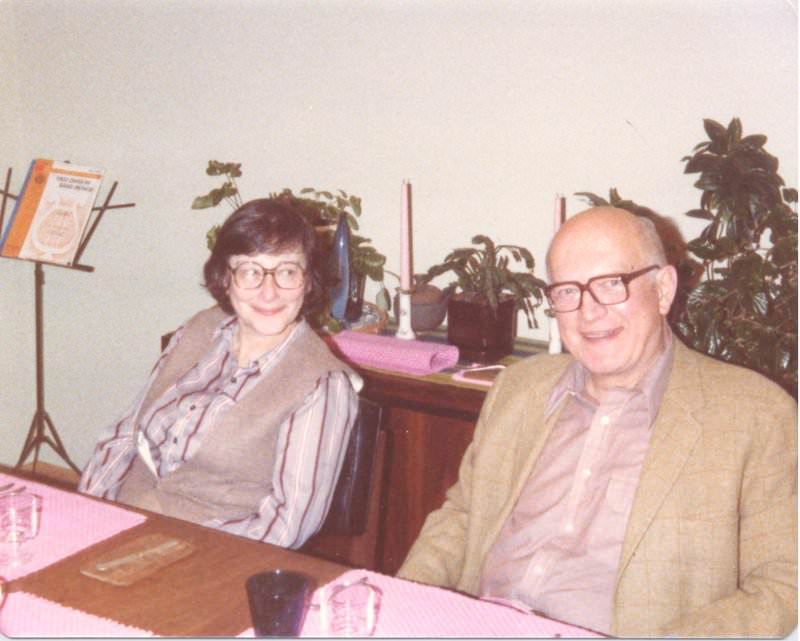}
        \caption{Paul and Felice Bateman in January 1980. 
        Photo courtesy of Harold G.~Diamond.}
    \end{subfigure}
    \quad
    \begin{subfigure}[t]{0.425\textwidth}
        \centering
        \includegraphics[height=2.25in]{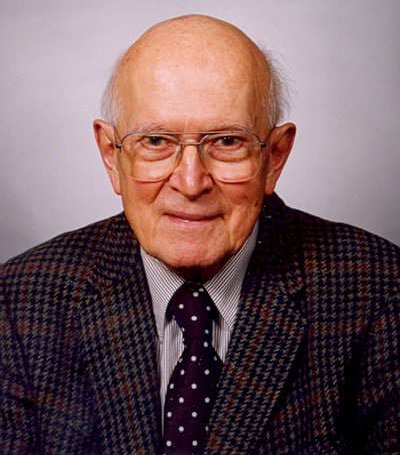}
        \caption{Photo courtesy of the University of Illinois mathematics department}
    \end{subfigure}
    \caption{Paul T.~Bateman (1919--2012)}
    \label{Figure:Bateman}
\end{figure}

Roger A.~Horn (1942--) received his Ph.D. from Stanford in 1967, under the direction of 
Donald Spencer (1912--2001) and Charles Loewner (1893--1968);
see Figure \ref{Figure:Horn}.  He worked briefly at Santa Clara University 
before moving to Johns Hopkins in 1968, where he founded the Department of Mathematical Sciences in 1972.
He remained at Johns Hopkins until 1992, when he moved to the University of Utah as Research Professor.
He retired in 2015 and currently resides in Tampa.

Horn is known best for his long and storied career in matrix analysis.
Among his chief publications are the
classic texts \emph{Matrix Analysis} \cite{MA} and \emph{Topics in Matrix Analysis} \cite{Topics}, both coauthored with Charles Johnson.
Of his many papers, only two are on number theory;
both of these date from the early 1960s and concern the Bateman--Horn conjecture \cite{Bateman, BatemanHorn-2}.
Consequently, many of his close colleagues are unaware of his connection to a famous
conjecture in number theory.\footnote{A common misconception is that Roger Horn is the ``Horn'' from the famed Horn conjecture about the eigenvalues of a sum of two Hermitian matrices, settled in 1999 by Knutson--Tao \cite{Knutson} and Klyachko \cite{Klyachko}.
That distinction belongs to Alfred Horn (1918--2001), who made the conjecture in 1962 \cite{HornWrong}, the
same year in which the Bateman--Horn conjecture appeared \cite{Bateman}.}
For example,
the third named author wrote a linear algebra textbook \cite{GarciaHorn}
with Roger Horn before he learned, in the course of 
a number theory project \cite{GKL, GLS},
that Roger was ``the'' Horn from Bateman--Horn!

\begin{figure}
    \centering
    \begin{subfigure}[t]{0.5195\textwidth}
        \centering
        \includegraphics[width=\textwidth]{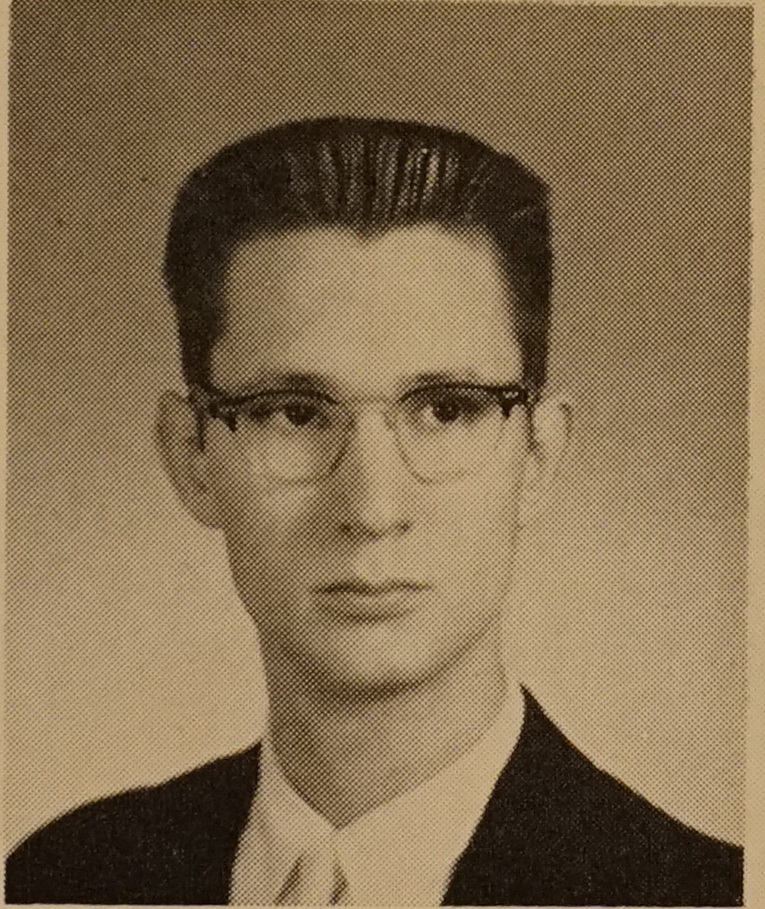}
        \caption{Roger Horn in his 1963 Cornell graduation photo, around the time
        of his work under Bateman.}
    \end{subfigure}
    \begin{subfigure}[t]{0.4105\textwidth}
        \centering
        \includegraphics[width=\textwidth]{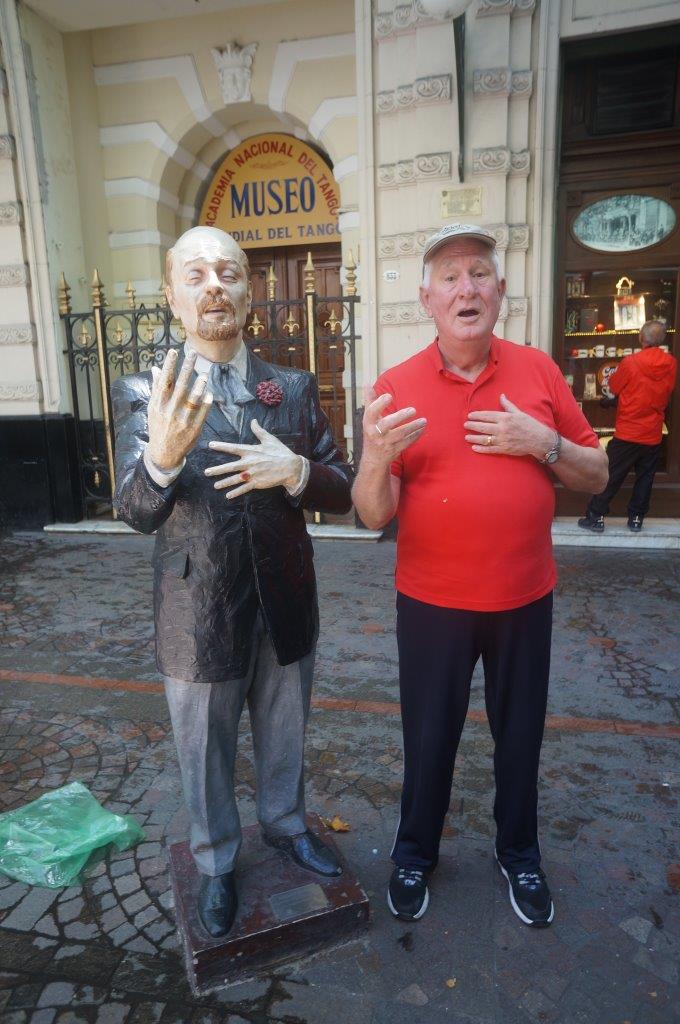}
        \caption{Roger Horn on vacation in Buenos Aires, January 2017. }
    \end{subfigure}

    \caption{Photographs provided courtesy of Roger A.~Horn.}
    \label{Figure:Horn}
\end{figure}

How did Roger Horn co-propose an important conjecture in a field so far from his own?
We are fortunate to have access to his detailed recollections \cite{HornPersonal}.
\begin{quote}\small
In the early 1960s, the National Science Foundation funded several summer programs intended to introduce college mathematics students to computing. In 1962 I applied to, and was accepted into, one of those programs, which was hosted by the Computing Center at the University of Illinois in 
Urbana-Champaign.  

There were about 10 participants, from all over the country. We were housed in 
university dorm rooms, attended classes in the Computing Center, and had 
unlimited access to the hottest computer on campus, the ILLIAC, which later 
was known as the ILLIAC I when its successor, the ILLIAC II was built. 
\end{quote}

The ILLIAC (\textbf{Ill}inois \textbf{A}utomatic \textbf{C}omputer), which powered up 
on September 22, 1952, was the first computer to be built and owned by a United States 
academic institution; see Figure \ref{Figure:ILLIAC}.  It was the second of two identical computers,
the first of which was the ORDVAC (Ordnance Discrete Variable Automatic Computer), built by the 
University of Illinois for the government's Ballistics Research Laboratory.  
The two machines employed the architecture proposed by 
John von Neumann in 1945.

\begin{figure}
    \centering
        \includegraphics[width=0.9\textwidth]{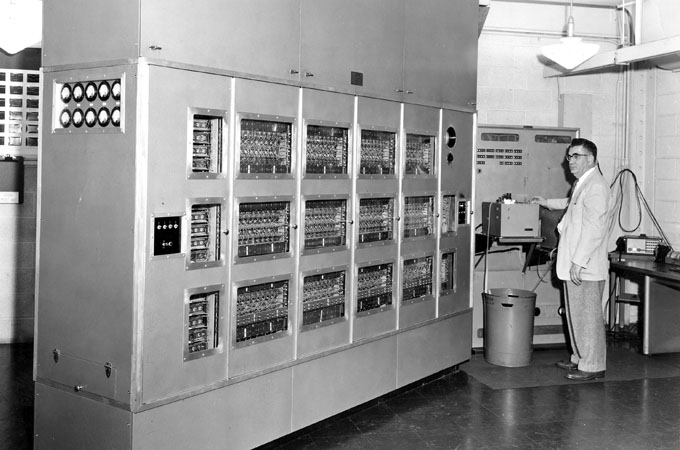}
    \caption{The ILLIAC I computer around 1952.
    Courtesy of the University of Illinois Archives}
    \label{Figure:ILLIAC}
\end{figure}

\begin{quote}\small
In those days, 
universities built their own computers: IBM hardware was of the punch-card variety, 
for which businesses were the primary customers; they were not well suited for scientific work.
It was the size of a small house and it consumed a prodigious amount of electric power. It stopped working frequently when one of its thousands of vacuum tubes died. We programmed it in hexadecimal machine code; no high-level user language (BASIC or FORTRAN, for example) was ever written for it.
\end{quote}

According to the archives of the University of Illinois, the ILLIAC
weighed two tons, measured $10 \times 2 \times 8.5$ feet,\footnote{Horn says ``this was only the console, the big box visible in Figure \ref{Figure:ILLIAC}.
All this stuff, and a huge power supply, was in a big adjoining room (the size of a small house).''} 
and required approximately $2{,}800$
vacuum tubes to operate \cite{Illinois}.  
A later survey, published in 1961 and based upon observations made in 1959,
gives quite different figures:  4{,}427 vacuum tubes of twenty-seven different types \cite{Weik}.
It is likely that the system was somewhat expanded and upgraded in the period since its construction 
in 1951 and the latter figure perhaps more closely approximates the system that Bateman and Horn used.

\begin{quote}\small
Some classes were organized for us. One was on Boolean logic and circuit design, taught by one of the engineers who was working on the design of ILLIAC II. Another was on numerical analysis, taught by Herb Wilf, who was a young assistant professor and author (with Anthony Ralston) of a new numerical analysis textbook
[Mathematical Methods for Digital Computers]. I first learned about interpolation and orthogonal polynomials in that class.

Initially, we were given small problems to program for the ILLIAC to develop our programming skills. Programs were typed onto paper tape with the same Teletype machines used by Western Union. Noisy! We submitted our tapes to the ILLIAC operator, who fed them into the machine. We did that a lot because most of the time our programs crashed. The ILLIAC had a small speaker hooked up to a bit in its accumulator register, and gave out a high-pitched whine when it went into a loop. The operator had to flip a ``kill'' switch to stop it, and that was embarrassing. 
\end{quote}

The ILLIAC could read punched paper tape at a rate of 300 characters per second.  Moreover,
``five hole teletype tape is used. Numerical data are read with a $4$-hole code.
Alphanumerical data employs a $5$-hole code and a special instruction'' \cite{Weik}.
Output appeared on paper tape at 60 characters per second, or on a page
printer at a sluggish 10 characters per second.  

\begin{quote}\small
After a couple of weeks we started work on some projects. The organizers had lined up some faculty who were willing to mentor us and supervise projects. I chose two: One was as part of a team of three supervised by Herb Wilf. We did a lot of calculations in an attempt to find a counterexample to the P\'olya--Schoenberg conjecture (if two normalized univalent analytic functions on the unit disk have the property that each maps the unit disk onto a convex domain, then their Hadamard product has the same property). Part of the computation required testing some very large Hermitian matrices for positive definiteness, so I learned something about that topic. All of our runs produced negative results\ldots no counterexamples found. This was a good thing, because about 10 years later the conjecture became a theorem.
\end{quote}

The conjecture, stated in 1958  by George P\'olya (1887--1985)
and Isaac Schoenberg (1903--1990) \cite{Polya}, became a theorem in 1973 when it was proved by 
Stephan Ruscheweyh and Terence Sheil-Small \cite{Ruscheweyh}.  Although we do not wish to drift too far afield, there 
are a few tangential remarks of mild historical interest that are worth making.
First, Herbert Saul Wilf (1931--2012) was at Illinois from 1959 to 1962, after which he moved to the University of Pennsylvania.  
Thus, Horn must have worked with Wilf just before his departure.  Wilf's 1963 paper on the P\'olya--Schoenberg conjecture
also mentions Horn's contribution and identifies several other participants of the 1962 summer research program:
\begin{quote}\small
The machine program was planned and executed by Messrs. Roger A. Horn (Cornell University), Forrest R. Miller Jr. (University of Oklahoma) and Gerald Shapiro (Massachusetts Institute of Technology) who visited the Digital Computer Laboratory at Illinois during a summer program for undergraduates in Applied Mathematics sponsored by the National Science Foundation. These calculations were made possible largely by their dedication and enthusiasm. \cite{Wilf}
\end{quote}

Now back to number theory and Roger Horn's account of the origins of the Bateman--Horn conjecture  \cite{HornPersonal}.
\begin{quote}\small
My other project was a lone effort supervised by Paul T. Bateman, 
a famous analytical number theorist; I think he was chair of the math department at the time. 
His Ph.D. advisor was Hans Rademacher. He had me read some papers that dealt with a variety of number-theoretic conjectures (there were then, and still are now, a LOT of them!) with the goal of choosing something that might be amenable to experimental computation. Eventually, we settled on the problem reported on in our 1962 Math.~Comp.~paper. I burned up about 7 hours of ILLIAC time, but the results were very interesting and gave increased confidence in the conjectures. 
\end{quote}

The UIUC mathematics department website and two short biographies of Bateman 
assert that he was department head (not chair) from 1965 until 1980 \cite{Biography, Diamond}.
Hugh Montgomery tells us that ``Bateman was not the chair of the math dept when I arrived as a freshman in 1962.
The chair at that time was M.M.~Day.  But during my sophomore year, Day became
ill with an ulcer, and Bateman was then asked to take over. He was probably chair
first, and then head later'' \cite{MontgomeryPersonal}.\footnote{Montgomery also remarks ``I was interested in number theory already when I was in high school. At Illinois I started taking their honors math courses. I got to know Bateman
during the second half of my sophomore year, when I took his graduate-level problem-solving class. I worked around 40 hrs per week on that one class, while carrying a full load of other courses, but it was worth it. During the summer after my junior year, he
had me stay in Urbana and do a research project, probably on the same grant that
Horn had been on.  It was sort of a precursor of REU.''}

Of greater interest to us are the computations mentioned above.
The paper \cite{Bateman}, in which the Bateman--Horn conjecture is stated, says the following.
\begin{quote}\small 
The second-named author [Roger Horn] used the ILLIAC to prepare a list of the 776 primes of
the form $p^2 + p + 1$ with $p$ a prime less than $113{,}000$. (The program used was a straightforward one, and the running time was about $400$ minutes.) 
The first $209$ of these primes are listed by Bateman and Stemmler who considered primes of the form 
$p^2 + p + 1$ in connection with a problem in algebraic number theory.
\end{quote}

The ``Stemmler'' mentioned above is Rosemarie M.S. Stemmler, a student of Bateman who received her Ph.D. in 1959 \cite{BatemanStemmler}.  
Bateman and Horn
computed $Q(f_1,f_2;x)$ for various $x \leq 113{,}000$ with
$f_1(t)  = t$ and $f_2(t) = t^2+t+1$.
On the third named author's late-2013 iMac, the same computation takes only a tenth of a second!

Although the summer drew to a close, Horn continued to work on the project:
\begin{quote}\small
When the summer was over, I went back to Cornell for my senior year and found that they had taken delivery of a brand new CDC [Control Data Corporation] 1604 computer.
It took a while for folks to discover that it was in operation and move their work to a new programming environment, so I was able to get quite a lot of overnight time on the machine, which was much faster than the ILLIAC and a lot more reliable. It had FORTRAN, too! I ran a lot of additional experiments that were reported in our 1965 Symposia in Pure Math VIII paper \cite{BatemanHorn-2}.
And then I graduated, went to graduate school, took other directions in my research, and haven't thought about these number theory issues since 1963.
\end{quote}

We wrap things up with a humorous anecdote connected to the Bateman--Horn conjecture.
Serge Lang (1927--2005), in his book \emph{Math Talks for Undergraduates} provides one of 
the few expositions of the conjecture \cite{Lang}.  
In the introduction, he claims that his tone was too conversational and informal for certain editors:
\begin{quote}\small
[Paul] Halmos once characterized this style as ``vulgar'', and obstructed publication of excerpts in the \emph{Math Monthly}.
  A decade later, in the 1990s, the present talk was offered for publication again in the \emph{Math Monthly}, and was turned down by the editor (Roger Horn, 
this time) because of the spoken style. 
Well, I like the spoken style, and I find it effective.  Go figure.  \cite[p.1]{Lang}
\end{quote}

There is a remarkable confluence here.
Paul Halmos, the academic grandfather of the third named author, 
was editor(-in-chief) of the \emph{American Mathematical Monthly} from 1982 to 1986.
Herbert S. Wilf, who we met above in connection to the P\'olya--Schoenberg conjecture, was the editor from 1987-1991.
Roger Horn was editor from 1997 to 2001!

Horn recalls that he ``had a memorable bad experience once with Lang, while I was Editor of the Monthly.''
Although he has no recollection of a submission related to the Bateman--Horn conjecture, he does remember several submissions on other topics.
He also vividly remembers a phone call in which
``[Lang] shouted at me for ten minutes or so, and then hung up.''

\section{Why does the product converge?}
\label{Section:Converge}

We now discuss the convergence of the product \eqref{eq:C} that defines the Bateman--Horn constant 
$C(f_1,f_2,\ldots,f_k)$.  This is a delicate argument that requires elements of both algebraic and analytic number theory,
along with a few tricks to deal with conditionally convergent infinite products. 
In \cite[p.~36]{luca}, the authors state:
\begin{quote} \small
It is not even clear that in formula (2.18) the expression $C(f_1,f_2,\ldots,f_k)$ represents a product which converges to a positive limit.
\end{quote}

\noindent
We wish to provide a thorough account here since most of these details are suppressed in the original source \cite{Bateman}.  

\subsection{Infinite products}\label{Section:Product}

Before we can proceed with the proof that the product \eqref{eq:C} that defines the Bateman--Horn constant converges,
we require a few general words about infinite products.  

The only way that a zero factor can appear in the evaluation of $C(f_1,f_2,\ldots,f_k)$
is if $\omega_f(p) = p$ for some prime $p$; that is, if $f$ vanishes identically modulo $p$.
This is prohibited by the hypotheses of the Bateman--Horn conjecture,
so we can safely ignore this possibility.  
Let $a_n$ be a sequence in $\mathbb{C} \backslash \{-1\}$.
Fix a branch of $\log z$ the logarithm with $\log 1 =0$ and for which $\log (1+a_n)$ is defined.
\begin{itemize}
\item We say that $\prod_{n=1}^{\infty} (1+a_n)$ \emph{converges} to $L \neq 0$
if and only if $\sum_{n=1}^{\infty} \log (1+a_n)$ converges to $\log L$.  
Otherwise the infinite product \emph{diverges}.

\item If $a_n$ is a sequence of real numbers and
$\sum_{n=1}^{\infty} \log (1+a_n)$ diverges to $-\infty$, 
then we say that $\prod_{n=1}^{\infty} (1+a_n)$ \emph{diverges to zero}.
In particular, this means that the partial products $\prod_{n=1}^{N} (1+a_n)$ tend to zero as $N \to \infty$.
\end{itemize}

It turns out that the infinite products that arise in the Bateman--Horn conjecture
are often rather finicky.  To handle them, 
we require the following convergence criterion.  Although it is well known in analysis circles
as a folk theorem, we are unable to find a reference that contains a proof.
For the sake of completeness, we provide the proof below.

\begin{lemma}\label{Lemma:Trick}
Let $a_n$ be a sequence in $\mathbb{C} \backslash\{-1\}$.
If $\ \sum_{n=1}^{\infty} |a_n|^2 < \infty$, then 
$\sum_{n=1}^{\infty} a_n$ and $\prod_{n=1}^{\infty}(1+a_n)$ converge or diverge together.
\end{lemma}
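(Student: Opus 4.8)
\textbf{The plan} is to pass from the product to the series that defines its convergence and then compare that series termwise with $\sum_{n=1}^\infty a_n$. By the definition of product convergence given just above the statement, $\prod_{n=1}^\infty(1+a_n)$ converges (to a nonzero limit) if and only if $\sum_{n=1}^\infty \log(1+a_n)$ converges, where the branch is chosen with $\log 1 = 0$. Thus the entire lemma reduces to the single comparison: under the hypothesis $\sum_{n=1}^\infty |a_n|^2 < \infty$, the series $\sum_{n=1}^\infty \log(1+a_n)$ and $\sum_{n=1}^\infty a_n$ converge or diverge together. I would establish this by showing that their termwise difference $\sum_{n=1}^\infty\big(\log(1+a_n)-a_n\big)$ converges \emph{absolutely}, since two series differing by an absolutely convergent series share the same convergence behavior.

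The key estimate is the elementary Taylor bound for the logarithm near $1$. First I would observe that $\sum_{n=1}^\infty |a_n|^2 < \infty$ forces $|a_n|\to 0$, so there is an index $N_0$ with $|a_n|\leq \tfrac12$ for all $n\geq N_0$; in particular $1+a_n$ lies near $1$ and the principal branch of $\log$ applies to the entire tail. For $|z|\leq \tfrac12$ the series expansion $\log(1+z)=\sum_{k\geq 1}(-1)^{k+1}z^k/k$ gives
\begin{equation*}
|\log(1+z)-z| \ \leq\ \sum_{k\geq 2}\frac{|z|^k}{k} \ \leq\ \frac{1}{2}\sum_{k\geq 2}|z|^k \ =\ \frac{|z|^2}{2(1-|z|)} \ \leq\ |z|^2 .
\end{equation*}
Applying this with $z=a_n$ for $n\geq N_0$ yields $|\log(1+a_n)-a_n|\leq |a_n|^2$, and therefore
\begin{equation*}
\sum_{n\geq N_0} |\log(1+a_n)-a_n| \ \leq\ \sum_{n\geq N_0}|a_n|^2 \ <\ \infty .
\end{equation*}
The finitely many terms with $n<N_0$ do not affect convergence, so $\sum_{n=1}^\infty\big(\log(1+a_n)-a_n\big)$ converges absolutely, as desired.

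From here the conclusion is immediate in both directions. If $\sum_{n=1}^\infty a_n$ converges, then adding the convergent series $\sum_{n=1}^\infty\big(\log(1+a_n)-a_n\big)$ shows $\sum_{n=1}^\infty \log(1+a_n)$ converges, whence the product converges; conversely, convergence of $\sum_{n=1}^\infty \log(1+a_n)$ yields convergence of $\sum_{n=1}^\infty a_n$ by subtracting the same absolutely convergent series. Consequently the two series, and hence the sum and the product, converge or diverge together. I expect the only genuinely delicate point to be the bookkeeping around the complex logarithm: one must confirm that the chosen branch makes $\log(1+a_n)$ well defined and that the tail lies in the disk $|z|\leq\tfrac12$ where both the branch and the Taylor bound are valid. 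Since $|a_n|\to 0$ secures exactly this, the remaining computation is routine, and the hypothesis $\sum |a_n|^2<\infty$ is seen to be precisely what is needed to control the second-order discrepancy between $\log(1+a_n)$ and $a_n$.
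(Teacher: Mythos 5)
Your proof is correct and follows essentially the same route as the paper: both expand $\log(1+z)=z+O(|z|^2)$ on the disk $|z|\leq\tfrac12$ (the paper writes $\log(1+z)=z+z^2L(z)$ with $|L(z)|<1$, you bound $|\log(1+z)-z|\leq|z|^2$ directly), use $\sum|a_n|^2<\infty$ to make the discrepancy series absolutely convergent, and conclude that $\sum a_n$ and $\sum\log(1+a_n)$, hence the product, converge or diverge together. No gaps.
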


\begin{proof}
For $|z| \leq \frac{1}{2}$, 
\begin{equation}\label{eq:Lazy}
\log(1+z) = \sum_{n=1}^{\infty}\frac{(-1)^{n-1} z^n}{n}
= z + \left(-\frac{1}{2} + \frac{z}{3} - \frac{z^2}{4} + \cdots\right)z^2 
= z + z^2 L(z),
\end{equation}
in which 
\begin{equation*}
|L(z)|  \leq \sum_{n=0}^{\infty} \frac{1}{(n+2)2^n} = -2 + \log 16 = 0.77258\ldots < 1.
\end{equation*}
If $\sum_{n=1}^{\infty} |a_n|^2 < \infty$, then there is an $N$ such that $|a_n| \leq \frac{1}{2}$ for $n\geq N$.  Therefore,
\begin{equation*}
\sum_{n=N}^{\infty} \log(1+a_n) 
= \sum_{n=N}^{\infty} a_n + \sum_{n=N}^{\infty} a_n^2 L(a_n),
\end{equation*}
in which the second series on the right-hand side converges absolutely
by the comparison test.  Thus,
\begin{equation*}
\text{$\sum_{n=1}^{\infty} a_n$ converges}
\iff
\text{$\sum_{n=1}^{\infty} \log(1+a_n)$ converges}
\iff
\text{$\prod_{n=1}^{\infty} (1+a_n)$ converges}. \qedhere
\end{equation*}
\end{proof}

\begin{example}\label{Example:Weird}
The hypothesis $\sum_{n=1}^{\infty} |a_n|^2 < \infty$ is necessary in Lemma \ref{Lemma:Trick}.
If
\begin{equation*}
a_n = \frac{(-1)^n}{\sqrt{n \log n}}
\end{equation*}
for $n\geq 2$,
then 
\begin{equation}\label{eq:LCT}
\sum_{n=2}^{\infty} |a_n|^2 = \sum_{n=2}^{\infty} \frac{1}{n \log n}
\end{equation}
diverges by the integral test.  However, $\sum_{n=2}^{\infty} a_n$ converges by the alternating series test while
the second series on the right-hand side of
\begin{equation*}
\sum_{n=4}^{\infty} \log(1+ a_n) = \sum_{n=4}^{\infty} a_n + \sum_{n=4}^{\infty} \frac{L(a_n)}{n \log n}
\end{equation*}
diverges by the limit comparison test against \eqref{eq:LCT}
since $L(a_n) \to -\frac{1}{2}$ by \eqref{eq:Lazy}.\footnote{To use \eqref{eq:LCT} we require $|a_n| \leq \frac{1}{2}$.
Note that $|a_3| > \frac{1}{2}$ and $|a_n| \leq \frac{1}{2}$ for $n \geq 4$.}
\end{example}

The infinite product $\prod_{n=1}^{\infty} (1+a_n)$ \emph{converges absolutely} if
$\prod_{n=1}^{\infty} (1+|a_n|)$ converges; this is equivalent to the convergence of $\sum_{n=1}^{\infty} |a_n|$.
An infinite product that converges but does not converge absolutely is
\emph{conditionally convergent}.

\subsection{Algebraic prerequisites}

Let $\K$ be a number field; that is, a finite algebraic extension of $\Q$.
This implies that each element of $\K$ is algebraic over $\Q$ and that the dimension 
of $\K$ as a $\Q$-vector space is finite.  This dimension is called the \emph{degree} of $\K$ over $\Q$ and denoted by $[\K:\Q]$.

For each $\alpha \in \K$, there is a unique irreducible polynomial $m_{\alpha}(x) \in \Z[x]$ 
with relatively prime coefficients and positive leading coefficient such that $m_{\alpha}(\alpha) = 0$.
This is the \emph{minimal polynomial} of $\alpha$.  
The \emph{degree of $\alpha$}, denoted by $\deg \alpha$, is the degree of the polynomial $m_{\alpha}$, which is at most $[\K:\Q]$.
One can show that 
\begin{equation*}
\O_{\K} := \left\{ \alpha \in \K :  \text{$m_{\alpha}(x)$ is monic} \right\}
\end{equation*}
is a subring of $\K$ (see, for instance, Theorem~2.9 of~\cite{StewartTall} or p.16 of~\cite{Marcus}); it is the \emph{ring of algebraic integers of $\K$}. 
Since $m_n(x) = x-n$ is irreducible for each $n \in \Z$, it follows that $\Z \subseteq \O_{\K}$.

For $\alpha \in \K$, let $\Q(\alpha)$ denote the smallest (with respect to inclusion) subfield of $\K$ that contains
$\Q$ and $\alpha$. 
The following important theorem asserts that every number field is 
generated by a single algebraic integer \cite[Thm.~2.2 \& Cor.~2.12]{StewartTall}.

\begin{theorem}[Primitive Element Theorem]
If $\K$ is a number field, then there is a $\theta \in \O_{\K}$ such that $\K = \Q(\theta)$.
\end{theorem}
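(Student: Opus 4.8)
The plan is to first produce \emph{some} rational generator $\gamma \in \K$ with $\K = \Q(\gamma)$, and only afterward arrange for the generator to be an algebraic integer. The second adjustment is routine, so I would dispatch it first: once $\K = \Q(\gamma)$, write the minimal polynomial as $m_\gamma(x) = a_d x^d + \cdots + a_1 x + a_0 \in \Z[x]$ with $a_d \neq 0$. Multiplying the relation $m_\gamma(\gamma) = 0$ by $a_d^{d-1}$ and regrouping shows that $\theta := a_d \gamma$ is a root of a monic polynomial in $\Z[x]$, whence $\theta \in \O_{\K}$; and since $\theta$ and $\gamma$ differ by the nonzero rational $a_d$, we have $\Q(\theta) = \Q(\gamma) = \K$. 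Thus the entire content lies in finding a single rational generator.

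Because $[\K:\Q]$ is finite, we may write $\K = \Q(\alpha_1, \ldots, \alpha_m)$ for finitely many $\alpha_i \in \K$ (for instance, the elements of a $\Q$-basis). An easy induction on $m$ then reduces everything to the two-generator statement: \emph{if $\alpha, \beta \in \K$, then $\Q(\alpha, \beta) = \Q(\gamma)$ for a single $\gamma$}. Indeed, granting this, $\Q(\alpha_1, \alpha_2) = \Q(\gamma_2)$, then $\Q(\gamma_2, \alpha_3) = \Q(\gamma_3)$, and so on until all generators are absorbed.

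For the two-generator case I would seek $\gamma$ of the special shape $\gamma = \alpha + c\beta$ with $c \in \Q$ to be chosen. The inclusion $\Q(\gamma) \subseteq \Q(\alpha, \beta)$ is automatic, so the real task is to recover $\beta$ (and hence $\alpha = \gamma - c\beta$) from $\gamma$. Let $f = m_\alpha$ and $g = m_\beta$, with full root sets $\alpha = \alpha_1, \ldots, \alpha_r$ and $\beta = \beta_1, \ldots, \beta_s$ in $\qbar$; since $\Q$ has characteristic zero, both polynomials are separable, so these roots are distinct. Now consider $h(x) = f(\gamma - cx) \in \Q(\gamma)[x]$. By construction $h(\beta) = f(\alpha) = 0$ and $g(\beta) = 0$, so $\beta$ is a common root of $g$ and $h$. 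Any common root $\beta_j$ of $g$ forces $\gamma - c\beta_j = \alpha_i$ for some $i$, i.e. $c(\beta - \beta_j) = \alpha_i - \alpha$; choosing $c$ to avoid the finitely many values $(\alpha_i - \alpha)/(\beta - \beta_j)$ with $\beta_j \neq \beta$ (possible because $\Q$ is infinite) rules out every $\beta_j \neq \beta$. Hence $\beta$ is the \emph{only} common root of $g$ and $h$, and by separability it is a simple root of $g$. It follows that $\gcd(g, h) = x - \beta$ up to a constant; but the gcd of two polynomials over $\Q(\gamma)$ lies in $\Q(\gamma)[x]$, since the Euclidean algorithm never leaves the base field, so $x - \beta \in \Q(\gamma)[x]$ and therefore $\beta \in \Q(\gamma)$.

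The main obstacle — the one step that genuinely uses field structure rather than bookkeeping — is the gcd argument at the end: I must know both that $g$ and $h$ share \emph{exactly one} root and that this root is \emph{simple}, so that their gcd is forced to be the linear factor $x - \beta$ rather than something of higher degree. Both facts hinge on separability, which is precisely where characteristic zero is essential, while the infinitude of $\Q$ is what permits the avoidance of the finitely many forbidden values of $c$. Once $\beta \in \Q(\gamma)$ is established, the two-generator claim follows immediately, the induction collapses $\K$ to $\Q(\gamma)$, and the clearing-denominators step of the first paragraph delivers the required $\theta \in \O_{\K}$.
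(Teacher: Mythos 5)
Your proof is correct. Note, though, that the paper does not actually prove this theorem: it states it and cites Stewart--Tall [Thm.~2.2 \& Cor.~2.12], so there is no in-paper argument to compare against. What you have written is the standard classical proof (essentially the one in the cited reference and in van der Waerden): reduce by induction to two generators, take $\gamma = \alpha + c\beta$, and recover $\beta$ as the unique common root of $g = m_\beta$ and $h(x) = f(\gamma - cx)$ via the gcd computed over $\Q(\gamma)$. The delicate points are all handled: the finitely many forbidden values $(\alpha_i - \alpha)/(\beta - \beta_j)$ with $\beta_j \neq \beta$ can be avoided because $\Q$ is infinite (and $c=0$ is automatically among the excluded values, so you never degenerate to $\gamma = \alpha$); separability of $g$ in characteristic zero forces $\gcd(g,h)$ to be exactly $x-\beta$ rather than a higher power or a product of several linear factors; and the Euclidean algorithm keeps the gcd in $\Q(\gamma)[x]$ while the gcd is unchanged under passage to $\qbar$. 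The final step, multiplying a generator $\gamma$ by the leading coefficient $a_d$ of its minimal polynomial to land in $\O_{\K}$, is also correct: $a_d^{d-1} m_\gamma(x)$ rewritten in the variable $y = a_d x$ is monic with integer coefficients. Your write-up thus supplies, in full, the proof the paper outsources to the literature.
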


If $\K = \Q(\theta)$, then we have the field isomorphism
\begin{equation*}
\K \cong \Q[x] /\langle m_{\theta}(x) \rangle,
\end{equation*}
in which $\langle  m_{\theta}(x) \rangle$ is the (maximal) ideal in $\Q[x]$ generated by 
the irreducible polynomial $m_{\theta}(x)$.  In this case, $[\K:\Q] = \deg \theta$.
Observe that $\Z[\theta]$, the set of integral linear combinations of powers of $\theta$,
is a subring of $\O_{\K}$ and hence $\O_{\K}$ is a ring extension of $\Z[\theta]$. 
The index of $\Z[\theta]$ inside $\O_{\K}$ (as abelian groups), which is finite,
is denoted $[\O_{\K} : \Z[\theta]]$.

We say that $p$ is a \emph{rational prime} if it is a prime in the ring $\Z$;
that is, if $p$ is prime in the traditional sense.
For each rational prime $p$, the set $p\O_{\K}$ is an ideal in $\O_{\K}$.
Although this ideal might not be a prime ideal in $\O_{\K}$, it can be factored
as a product of prime ideals \cite[Thm.~5.6]{StewartTall}.  
Thus, for each rational prime $p$
there exist distinct prime ideals $\PP_1,\PP_2,\ldots,\PP_k \subset \O_{\K}$ 
and positive integers $e_1,e_2,\ldots,e_k$ such that
\begin{equation}\label{eq:IdealFactor}
p\O_{\K} = \PP_1^{e_1} \PP_2^{e_2}\cdots \PP_k^{e_k}.
\end{equation}
This factorization is unique up to permutation of factors.  
Each prime ideal $\PP\subset\O_{\K}$ 
can be present in the factorization for only one rational prime \cite[Thm.~5.14c]{StewartTall}.   

If $e_i > 1$ for some $i$ in \eqref{eq:IdealFactor}, then $p$ \emph{ramifies} in $\K$; 
the exponents $e_1,e_2,\ldots,e_k$ are called \emph{ramification indices}. 
There are only finitely many rational primes $p$ that ramify in a given number field \cite[Cor.~2, p.~73]{Marcus}.
Since prime ideals in $\O_{\K}$ are maximal \cite[Thm.~5.3d]{StewartTall}, it follows that
$\O_{\K}/\PP_i$ is a field for each $\PP_i$ in the factorization \eqref{eq:IdealFactor}.
In fact, it is a finite field of characteristic $p$ \cite[p.~56]{Marcus} and hence
its cardinality is $p^{f_i}$ for some $f_i$, which is called the \emph{inertia degree} of $p$ at $\PP_i$
(the notation $f_i$ is standard and should not be confused with the polynomials in the statement of the Bateman--Horn conjecture).
The \emph{norm} of the ideal $\PP_i$ is 
\begin{equation}\label{eq:NormIdeal}
N(\PP_i) = \left| \O_{\K}/\PP_i \right| = p^{f_i}
\end{equation}
and there are only finitely many prime ideals in $\O_{\K}$ of a given norm \cite[Thm.~5.17c]{StewartTall}.
The factorization \eqref{eq:IdealFactor} is related to the factorization of $m_{\theta}(x)$ modulo $p$.
This connection is given by the Dedekind factorization criterion (see \cite[Prop.~25, p.~27]{Lang}).

\begin{theorem}[Dedekind Factorization Criterion]\label{Theorem:Dedekind} 
Let $\K=\Q(\theta)$, in which $\theta \in \O_{\K}$, and let $p$ a rational prime whose ideal $p\O_{\K}$ factors as in \eqref{eq:IdealFactor}.
If $p \nmid [\O_{\K} : \Z[\theta]]$, then there is a factorization 
\begin{equation*}
m_{\theta}(x) \equiv g_1(x)^{e_1}  g_2(x)^{e_2} \cdots g_k(x)^{e_k} \pmod{p}
\end{equation*}
into powers of irreducible polynomials $g_i(x)$ modulo $p$, in which $\deg g_i(x) = f_i$, 
the inertia degree of $p$ at the corresponding prime ideal $\PP_i$.
\end{theorem}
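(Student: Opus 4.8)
The plan is to deduce the factorization from a chain of ring isomorphisms whose linchpin is
\[
\Z[\theta]/p\Z[\theta] \;\cong\; \O_{\K}/p\O_{\K},
\]
which holds precisely because $p\nmid[\O_{\K}:\Z[\theta]]$; everything else is bookkeeping with the Chinese Remainder Theorem. I would prove this isomorphism first. Composing the inclusion $\Z[\theta]\hookrightarrow\O_{\K}$ with reduction modulo $p$ gives a ring homomorphism $\Z[\theta]\to\O_{\K}/p\O_{\K}$ whose kernel is $\Z[\theta]\cap p\O_{\K}$. Writing $m=[\O_{\K}:\Z[\theta]]$ and using $\gcd(p,m)=1$, I pick integers $u,v$ with $up+vm=1$. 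For surjectivity, given $\alpha\in\O_{\K}$ we have $m\alpha\in\Z[\theta]$, and $\alpha\equiv v(m\alpha)\pmod{p\O_{\K}}$ exhibits $\alpha$ in the image. For the kernel, if $p\gamma\in\Z[\theta]$ with $\gamma\in\O_{\K}$, then $\gamma=u(p\gamma)+v(m\gamma)\in\Z[\theta]$, so $\Z[\theta]\cap p\O_{\K}=p\Z[\theta]$; together these give the displayed isomorphism.

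Next I would identify each side concretely. Since $\theta$ is an algebraic integer, $m_{\theta}$ is monic and $\Z[\theta]\cong\Z[x]/\langle m_{\theta}\rangle$, so reduction modulo $p$ yields $\Z[\theta]/p\Z[\theta]\cong\ff_p[x]/\langle\overline{m_{\theta}}\rangle$, where $\overline{m_{\theta}}$ denotes $m_{\theta}$ taken modulo $p$. Factoring $\overline{m_{\theta}}=\prod_i\overline{g_i}^{\,a_i}$ into powers of distinct monic irreducibles over $\ff_p$ and applying the Chinese Remainder Theorem gives $\prod_i\ff_p[x]/\langle\overline{g_i}^{\,a_i}\rangle$. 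On the other side, the Chinese Remainder Theorem applied to $p\O_{\K}=\PP_1^{e_1}\cdots\PP_k^{e_k}$ from \eqref{eq:IdealFactor} gives $\O_{\K}/p\O_{\K}\cong\prod_j\O_{\K}/\PP_j^{e_j}$.

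Finally I would match local factors. Each $\ff_p[x]/\langle\overline{g_i}^{\,a_i}\rangle$ and each $\O_{\K}/\PP_j^{e_j}$ is a finite local ring, so the isomorphism exhibits one ring as a product of local Artinian pieces in two ways; such a decomposition is unique up to reordering, since the primitive idempotents are canonical. In particular the two products have the same number of factors, giving a bijection between the $\overline{g_i}$ and the $\PP_j$. Matched factors have isomorphic residue fields, so $\ff_{p^{\deg g_i}}\cong\O_{\K}/\PP_j\cong\ff_{p^{f_j}}$ forces $\deg g_i=f_j$ by \eqref{eq:NormIdeal}; comparing $\ff_p$-dimensions then gives $a_i\deg g_i=e_jf_j$ (using $\lvert\O_{\K}/\PP_j^{e_j}\rvert=p^{e_jf_j}$), whence $a_i=e_j$. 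After reindexing, this is exactly $m_{\theta}\equiv\prod_i g_i^{e_i}\pmod{p}$ with $\deg g_i=f_i$.

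The step I expect to be the main obstacle is the isomorphism $\Z[\theta]/p\Z[\theta]\cong\O_{\K}/p\O_{\K}$: it is the only place the hypothesis $p\nmid[\O_{\K}:\Z[\theta]]$ enters, and it is genuinely needed, since when $p$ divides the index the reduction $\overline{m_{\theta}}$ can fail to record the splitting type of $p$. Once the isomorphism is available, the remainder is routine comparison of the two Chinese Remainder decompositions together with the uniqueness of the decomposition into local rings.
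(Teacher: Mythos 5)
The paper does not actually prove this theorem; it states it with a citation to \cite[Prop.~25, p.~27]{Lang}, so there is no in-paper argument to compare against. Your proposal is a correct and complete proof, and it is essentially the standard one found in the cited sources: the index hypothesis gives $\Z[\theta]/p\Z[\theta]\cong\O_{\K}/p\O_{\K}$, and the two Chinese Remainder decompositions of this Artinian ring are matched via the uniqueness of its splitting into local factors. The isomorphism argument (surjectivity via $\alpha\equiv v(m\alpha)$, kernel via $\gamma=u(p\gamma)+v(m\gamma)$) is exactly right, and you correctly isolate it as the only place where $p\nmid[\O_{\K}:\Z[\theta]]$ is used. Two small points you assert without proof but which are genuinely needed: the identification $\Z[\theta]\cong\Z[x]/\langle m_{\theta}\rangle$ uses that $m_{\theta}$ is monic (division algorithm), and the count $\lvert\O_{\K}/\PP_j^{e_j}\rvert=p^{e_jf_j}$ rests on the Dedekind-domain fact that each quotient $\PP_j^{\ell}/\PP_j^{\ell+1}$ is a one-dimensional vector space over $\O_{\K}/\PP_j$. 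Both are standard and their inclusion as citations or one-line remarks would make the argument self-contained at the level of the rest of the paper.
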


One immediate and important implication of this theorem is that
\begin{equation*}
\deg \theta = \sum_{i=1}^k e_if_i.
\end{equation*}
Observe also that $m_{\theta}(a) \equiv 0\pmod{p}$ for some $a \in \Z$ if and only if 
$(x-a) \mid m_{\theta}(x)$ modulo $p$.  This occurs if and only if $g_i(x) = x-a$ for some $i$,
in which case $f_i = \deg g_i = 1$ and \eqref{eq:NormIdeal} tells us that the corresponding prime ideal $\PP_i$ 
in the factorization \eqref{eq:IdealFactor} has norm $p$. 
Since there are only finitely many primes that divide the index $[\O_{\K} : \Z[\theta] ]$, we have the following corollary.

\begin{corollary}\label{Corollary:Ideals} Let $g(x) \in \Z[x]$ be a monic irreducible polynomial with root $\theta$ and let $\K = \Q(\theta)$.
For all but finitely many rational primes $p$, the number $\omega_g(p)$ of solutions to 
$g(x) \equiv 0\pmod{p}$ equals the number of prime ideals of norm $p$ in the prime ideal factorization of $p\O_{\K}$.
\end{corollary}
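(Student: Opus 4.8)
The plan is to deduce the statement almost entirely from the Dedekind factorization criterion (Theorem~\ref{Theorem:Dedekind}) together with the remarks immediately preceding it. First I would observe that, because $g$ is monic, irreducible over $\Q$, and has $\theta$ as a root, it is precisely the minimal polynomial $m_{\theta}$ of $\theta$: minimal polynomials are unique, so $g = m_{\theta}$, and in particular $\deg g = [\K:\Q]$. This lets me apply Theorem~\ref{Theorem:Dedekind} to $g$ directly.

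Next I would discard the bad primes. The index $[\O_{\K} : \Z[\theta]]$ is finite, so only finitely many rational primes $p$ divide it. For every $p$ outside this finite set, Theorem~\ref{Theorem:Dedekind} gives a factorization $g(x) \equiv g_1(x)^{e_1} \cdots g_k(x)^{e_k} \pmod{p}$ into powers of distinct irreducible polynomials $g_i$ modulo $p$, where $\deg g_i = f_i$ is the inertia degree of $p$ at the prime ideal $\PP_i$ appearing in the factorization \eqref{eq:IdealFactor} of $p\O_{\K}$, and this establishes a one-to-one correspondence between the factors $g_i$ and the ideals $\PP_i$. It then remains only to count, on each side, the objects relevant to the statement.

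The heart of the argument is the following bijection, which I would spell out in both directions. An integer $a$ is a solution of $g(x) \equiv 0 \pmod{p}$ if and only if the linear polynomial $x-a$ divides $g(x)$ modulo $p$, that is, if and only if $x-a$ equals one of the irreducible factors $g_i$. A factor $g_i$ is linear exactly when $f_i = \deg g_i = 1$, and by the norm formula \eqref{eq:NormIdeal} this happens precisely when the associated prime ideal $\PP_i$ has norm $N(\PP_i) = p^{f_i} = p$. Thus distinct solutions $a \neq a'$ give distinct linear factors $x-a \neq x-a'$, hence distinct prime ideals of norm $p$; conversely, each prime ideal of norm $p$ forces $f_i = 1$ and so yields a linear factor $x - a_i$, i.e.\ a root $a_i$. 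The count $\omega_g(p)$ of distinct residues $a$ therefore equals the number of prime ideals of norm $p$ in \eqref{eq:IdealFactor}, which is exactly what is claimed.

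The one point I would be careful about --- the only real obstacle --- is that $\omega_g(p)$ counts \emph{distinct} solutions, not factors with multiplicity, so I must check that the correspondence is genuinely a bijection and not merely a surjection. A root $a$ of high multiplicity (arising when $p$ ramifies, so the corresponding $e_i > 1$) still contributes a single distinct linear factor $x-a$, and hence a single prime ideal, so no overcounting occurs on either side; the ramification exponents $e_i$ are invisible both to $\omega_g(p)$ and to the tally of norm-$p$ primes. Once this is noted, the equality is immediate for all $p \nmid [\O_{\K} : \Z[\theta]]$, which is all but finitely many primes.
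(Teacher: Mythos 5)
Your proposal is correct and follows essentially the same route as the paper, which derives the corollary directly from the Dedekind factorization criterion by discarding the finitely many primes dividing $[\O_{\K}:\Z[\theta]]$ and matching roots $a$ of $g$ modulo $p$ with linear factors $x-a$, hence with prime ideals of inertia degree $1$ and norm $p$. Your additional care about distinctness and ramification multiplicities is a sound elaboration of the same argument rather than a different approach.
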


\subsection{Analytic prerequisites}
Later on we will need the following theorem of Leonhard Euler.
We present a proof due to Clarkson \cite{Clarkson};
see \cite{Vanden} for a survey of various proofs.

\begin{theorem}[L.~Euler, 1737]\label{Theorem:Euler}
$\displaystyle\sum_p \frac{1}{p}$ diverges.
\end{theorem}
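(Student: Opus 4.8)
The plan is to prove that $\sum_p \frac{1}{p}$ diverges by relating the sum over primes to the harmonic series, whose divergence is elementary. The cleanest route is via Clarkson's argument, which proceeds by contradiction. First I would assume, for the sake of contradiction, that $\sum_p \frac{1}{p}$ converges. Under this assumption, the tail of the series can be made arbitrarily small: there exists an index $N$ such that $\sum_{j > N} \frac{1}{p_j} < \frac{1}{2}$, where $p_1 < p_2 < \cdots$ enumerates the primes. The key idea is to split the primes into the finite ``head'' $p_1, \ldots, p_N$ and the ``tail'' $p_{N+1}, p_{N+2}, \ldots$, and then exploit the fact that the tail sum is bounded by $\frac{1}{2}$ to control a geometric-type series.

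Next I would set $M = p_1 p_2 \cdots p_N$, the product of the first $N$ primes, and consider the numbers of the form $1 + kM$ for $k = 1, 2, 3, \ldots$. The crucial observation is that no prime among $p_1, \ldots, p_N$ divides any $1 + kM$, since $1 + kM \equiv 1 \pmod{p_j}$ for each $j \leq N$. Consequently, every prime factor of every $1 + kM$ lies in the tail $\{p_{N+1}, p_{N+2}, \ldots\}$. This means that for each fixed $k$, the quantity $\frac{1}{1+kM}$ is dominated by a term appearing in the expansion of powers of the tail sum. More precisely, summing over all $k$, the series $\sum_{k=1}^{\infty} \frac{1}{1+kM}$ is bounded above by $\sum_{t=1}^{\infty} \left( \sum_{j > N} \frac{1}{p_j} \right)^t$, because each reciprocal $\frac{1}{1+kM}$ expands as a product of reciprocals of tail primes, and every such product appears somewhere in the expanded geometric series on the right.

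The contradiction then emerges from comparing growth rates. Since $\sum_{j > N} \frac{1}{p_j} < \frac{1}{2}$, the geometric series $\sum_{t=1}^{\infty} \left( \sum_{j > N} \frac{1}{p_j} \right)^t$ converges (its value is at most $\frac{1/2}{1 - 1/2} = 1$). Therefore $\sum_{k=1}^{\infty} \frac{1}{1+kM}$ would have to converge as well. But this is absurd: since $\frac{1}{1+kM} \sim \frac{1}{kM}$ as $k \to \infty$, the series $\sum_{k=1}^{\infty} \frac{1}{1+kM}$ diverges by comparison with the harmonic series $\frac{1}{M} \sum_{k=1}^{\infty} \frac{1}{k}$. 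This contradiction shows that the original assumption was false, and hence $\sum_p \frac{1}{p}$ diverges.

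The main obstacle — and the step requiring the most care — will be rigorously justifying the bound $\sum_{k=1}^{\infty} \frac{1}{1+kM} \leq \sum_{t=1}^{\infty} \left( \sum_{j > N} \frac{1}{p_j} \right)^t$. I would need to argue that each integer $1 + kM$ factors entirely into tail primes, and then verify that the reciprocal of each such integer appears as a distinct term when the powers of the tail sum are multiplied out. One must be attentive to the fact that repeated prime factors can occur, so the correspondence is an injection into the multiset of monomials rather than a clean bijection; fortunately, since all terms are positive, the inequality direction we need follows regardless of whether every monomial on the right is actually hit. Once this combinatorial bookkeeping is handled, the remainder of the argument is a routine convergence-divergence comparison.
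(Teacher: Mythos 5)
Your proposal is correct and is precisely the argument the paper gives (Clarkson's proof): assume convergence, isolate a tail with sum less than $\tfrac12$, form $Q = p_1\cdots p_K$ and observe that the integers $1+kQ$ have only tail primes as factors, then bound $\sum_k \frac{1}{1+kQ}$ by the convergent geometric series $\sum_m \left(\tfrac12\right)^m$ while it must diverge by comparison with the harmonic series. The only cosmetic difference is that the paper invokes the integral test rather than direct comparison with $\frac{1}{M}\sum_k \frac1k$ for the final divergence step.
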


\begin{proof}
		Let $p_n$ denote the $n$th prime number and suppose toward a contradiction that
		$\sum_{n=1}^{\infty} \frac{1}{p_n}$ converges.  Since the tail end of a
		convergent series tends to zero, let $K$ be so large that
		\begin{equation*}
			\sum_{j=K+1}^{\infty} \frac{1}{p_j}\ <\ \frac{1}{2}.
		\end{equation*}
		Let $Q = p_1 p_2 \cdots p_K$
		and note that none of the numbers
		$$Q+1,\, 2Q+1,\, 3Q+1,\ldots$$
		is divisible by any of the primes $p_1,p_2,\ldots,p_K$.
		Now observe that
		\begin{equation*}
			\sum_{n=1}^N \frac{1}{nQ+1}\ \leq\ \sum_{m=1}^{\infty}
			\bigg(  \sum_{j=K+1}^{\infty} \frac{1}{p_j} \bigg)^m
			\ < \ \sum_{m=1}^{\infty} \left( \frac{1}{2} \right)^m \ = \ 1
		\end{equation*}
		 for $N \geq 1$; the reason for the first inequality is the fact that the sum in the middle, when expanded
		term-by-term, includes every term on the left-hand side
		(and with a coefficient greater than
or equal to $1$).
		This is a contradiction, since 
		$\sum_{n=1}^{\infty} \frac{1}{nQ+1}$ diverges by the integral test.
\end{proof}

A more precise version of the preceding lemma was obtained by 
Franz Mertens (1840--1927). 
Since the proof of Mertens' theorem would draw us too far afield, we refer the reader to 
Terence Tao's exposition for details \cite{TaoMertens}.

\begin{theorem}[Mertens, 1874]\label{Theorem:Mertens}
\begin{equation*}
\sum_{p \leq x} \frac{1}{p} = \log \log x + B + O\left( \frac{1}{\log x} \right).
\end{equation*}
in which $B = 0.2614972128476\ldots$ is the \emph{Meissel--Mertens constant}. 
\end{theorem}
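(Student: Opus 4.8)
The plan is to reduce the statement to a cleaner weighted estimate and then recover the target sum by partial summation. Specifically, I would first establish \emph{Mertens' first theorem},
\begin{equation*}
\sum_{p \leq x} \frac{\log p}{p} = \log x + O(1),
\end{equation*}
and then transfer the weight $\log p$ onto the test function $1/\log t$ by Abel summation, which is what generates the $\log\log x$ main term together with an absolutely convergent remainder.

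For the first theorem, the idea is to evaluate $\log(n!) = \sum_{m \leq n}\log m$ in two ways. Comparison with $\int_1^n \log t\,dt$ (equivalently, Stirling's formula) gives $\log(n!) = n\log n - n + O(\log n)$. On the other hand, Legendre's formula for the exponent of a prime $p$ in $n!$ yields
\begin{equation*}
\log(n!) = \sum_{p \leq n}\left(\sum_{k \geq 1}\left\lfloor \frac{n}{p^k}\right\rfloor\right)\log p.
\end{equation*}
Writing $\lfloor n/p \rfloor = n/p + O(1)$ in the $k=1$ term and bounding the total contribution of higher prime powers by $n\sum_p \log p/(p(p-1)) = O(n)$, the right-hand side becomes $n\sum_{p\leq n}\frac{\log p}{p} + O(\theta(n))$, where $\theta(n) = \sum_{p\leq n}\log p = O(n)$ is Chebyshev's bound (available here from the prime number theorem, since $\theta(x)\sim x$, and in any case elementary). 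Dividing by $n$ and comparing with the Stirling estimate gives Mertens' first theorem, which I would then extend from integers to real $x$ by monotonicity.

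Next I would apply partial summation. Setting $A(t) = \sum_{p\leq t}\frac{\log p}{p} = \log t + R(t)$ with $R(t)=O(1)$, and taking $\phi(t) = 1/\log t$, Abel summation gives
\begin{equation*}
\sum_{p\leq x}\frac{1}{p} = \frac{A(x)}{\log x} + \int_2^x \frac{A(t)}{t(\log t)^2}\,dt.
\end{equation*}
The boundary term is $1 + O(1/\log x)$. Splitting $A(t) = \log t + R(t)$ inside the integral, the $\log t$ piece contributes $\int_2^x \frac{dt}{t\log t} = \log\log x + O(1)$, while the $R(t)$ piece has integrand of size $O(1/(t(\log t)^2))$ and is therefore absolutely convergent, so it equals a fixed constant minus a tail of size $O(1/\log x)$. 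Collecting all terms produces exactly $\log\log x + B + O(1/\log x)$ for some constant $B$.

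The main obstacle is not the shape of the asymptotic but the \emph{identification} of the constant: showing that the $B$ produced above coincides with the Meissel--Mertens constant $B = \gamma + \sum_p\big(\log(1-1/p) + 1/p\big)$. I expect this to require comparing the partial-summation constant against the harmonic expansion $\sum_{n\leq x}1/n = \log x + \gamma + O(1/x)$ together with the logarithm of the Euler product for $\zeta(s)$, carefully tracking how $\gamma$ and the convergent prime series combine as $s\to 1^+$. This bookkeeping, rather than any single hard estimate, is the delicate part, which is presumably why the authors defer to Tao's exposition \cite{TaoMertens}.
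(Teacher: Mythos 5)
The paper does not actually prove this theorem: it states it and explicitly defers to Tao's exposition \cite{TaoMertens}, remarking that a proof ``would draw us too far afield.'' So there is no in-paper argument to compare against; what you have written is the classical elementary proof, and it is essentially correct. The two-way evaluation of $\log(n!)$ (Stirling versus Legendre's formula), the $O(n)$ control of the floor-function errors via Chebyshev's bound $\theta(n)=O(n)$ and of the higher prime powers via the convergent series $\sum_p \log p/(p(p-1))$, and the subsequent Abel summation with $A(t)=\log t + O(1)$ against $\phi(t)=1/\log t$ all go through as you describe; the split of $\int_2^x A(t)\,t^{-1}(\log t)^{-2}\,dt$ into a $\log\log x$ main term plus an absolutely convergent remainder whose tail is $O(1/\log x)$ is exactly right. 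The one piece you leave as a sketch is the identification of the resulting constant with the Meissel--Mertens constant $\gamma+\sum_p\bigl(\log(1-1/p)+1/p\bigr)$ and its decimal value; you are correct that this requires an extra comparison with $\sum_{n\le x}1/n=\log x+\gamma+O(1/x)$ and the Euler product for $\zeta(s)$ as $s\to 1^+$, and it is genuinely the fiddliest part. For the purposes of this paper that gap is harmless: the only use of the theorem is in Lemma \ref{Lemma:Converge}, where $B$ enters solely as an unspecified constant that cancels into $A-B+C$, so the shape of the asymptotic is all that is needed.
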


Much of the analytic theory of prime numbers goes through to prime ideals,
\emph{mutatis mutandis}.  Define
\begin{equation*}
\pi_{\K}(x) = \left| \left\{ \PP \subset \O_{\K} : \PP \text{ is a prime ideal and } N(\PP) \leq x \right\} \right|,
\end{equation*}
which is a generalization of the usual prime counting function $\pi(x) = \pi_{\Q}(x)$.  The 
prime number theorem asserts that
$\pi(x) \sim x/\log x$.
This is a special case of Landau's prime ideal theorem \cite{Landau}, \cite[p.~194, p.~267]{MontgomeryVaughan}.

\begin{theorem}[Prime Ideal Theorem]
If $\K$ is a number field, then $\displaystyle \pi_{\K}(x) \sim \Li(x)$.
\end{theorem}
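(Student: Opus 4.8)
The plan is to transplant the classical Hadamard--de la Vall\'ee Poussin proof of the prime number theorem (Theorem~\ref{Theorem:PNT}) to the number field $\K$, replacing the Riemann zeta function by the \emph{Dedekind zeta function}
\[
\zeta_{\K}(s) \;=\; \sum_{\mathfrak{a}} \frac{1}{N(\mathfrak{a})^s} \;=\; \prod_{\PP} \left( 1 - \frac{1}{N(\PP)^s} \right)^{-1},
\]
in which the sum runs over all nonzero ideals $\mathfrak{a}\subset\O_{\K}$ and the product over all prime ideals $\PP$. The Euler product is the analytic incarnation of the unique factorization of ideals recorded in \eqref{eq:IdealFactor}, and convergence for $\Re s > 1$ follows by comparison with $\zeta_{\Q}(s)^{[\K:\Q]}$, since each rational prime lies below at most $[\K:\Q]$ prime ideals, each of norm at least $p$. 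Taking the logarithmic derivative yields the Dirichlet series
\[
-\frac{\zeta_{\K}'(s)}{\zeta_{\K}(s)} \;=\; \sum_{\PP}\sum_{m\geq 1} \frac{\log N(\PP)}{N(\PP)^{ms}} \;=\; \sum_{\mathfrak{a}} \frac{\Lambda_{\K}(\mathfrak{a})}{N(\mathfrak{a})^s},
\]
where $\Lambda_{\K}(\PP^m)=\log N(\PP)$ and $\Lambda_{\K}(\mathfrak{a})=0$ otherwise. I would then deduce the theorem from the asymptotic $\psi_{\K}(x):=\sum_{N(\mathfrak{a})\leq x}\Lambda_{\K}(\mathfrak{a}) \sim x$ by partial summation.

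The first analytic input to establish is that $\zeta_{\K}(s)$ continues holomorphically to the half-plane $\Re s > 1 - 1/[\K:\Q]$, apart from a single simple pole at $s=1$. This follows from a lattice-point count for the number of ideals of norm at most $x$ (equivalently, from the analytic class number formula), of which one needs only that the main term is $\kappa x$ with $\kappa>0$; subtracting it exhibits $\zeta_{\K}(s)-\kappa/(s-1)$ as a series convergent past the line. Because the pole is simple, $-\zeta_{\K}'/\zeta_{\K}$ has a simple pole at $s=1$ with residue exactly $1$, regardless of the value of $\kappa$.

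The hard part will be the nonvanishing of $\zeta_{\K}(s)$ on the line $\Re s = 1$, the crux of every result of this type. Following the classical trick, I would use the Euler product to write, for $\sigma>1$,
\[
\log|\zeta_{\K}(\sigma+it)| \;=\; \sum_{\PP}\sum_{m\geq 1} \frac{\cos(m t \log N(\PP))}{m\, N(\PP)^{m\sigma}},
\]
and invoke the elementary inequality $3+4\cos\theta+\cos 2\theta \geq 0$ to obtain
\[
3\log|\zeta_{\K}(\sigma)| + 4\log|\zeta_{\K}(\sigma+it)| + \log|\zeta_{\K}(\sigma+2it)| \;\geq\; 0.
\]
A hypothetical zero of order $\mu\geq 1$ at $1+it_0$ with $t_0\neq 0$ would contribute $4\mu$ to the coefficient of $\log(\sigma-1)$ in the middle term, which overpowers the coefficient $3$ coming from the simple pole in the first term; the left-hand side would then tend to $-\infty$ as $\sigma\to 1^+$, contradicting the inequality. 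This rules out zeros on the line.

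With these inputs in hand, the series $-\zeta_{\K}'/\zeta_{\K}$ has nonnegative coefficients, converges for $\Re s>1$, and differs from $1/(s-1)$ by a function continuous on $\Re s \geq 1$; the Wiener--Ikehara Tauberian theorem then yields $\psi_{\K}(x)\sim x$. Finally I would pass from $\psi_{\K}$ to $\pi_{\K}$ exactly as in the rational case: the higher prime-ideal powers $\PP^m$ with $m\geq 2$ contribute only $O(\sqrt{x}\log x)$, since $\pi_{\K}(y)\leq [\K:\Q]\,\pi(y)$, so that $\sum_{N(\PP)\leq x}\log N(\PP)\sim x$, and a standard partial summation gives $\pi_{\K}(x)\sim x/\log x$. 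Since $x/\log x \sim \Li(x)$ by Lemma~\ref{Lemma:Log}, the proof is complete.
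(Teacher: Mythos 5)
Your outline is correct, but note that the paper does not prove this theorem at all: it is stated as a known result with citations to Landau and to Montgomery--Vaughan, so there is no in-paper argument to compare against. What you have written is precisely the standard proof from those references --- Dedekind zeta function, Euler product over prime ideals, logarithmic derivative, continuation past $\Re s=1$ via the ideal-counting asymptotic, the $3+4\cos\theta+\cos 2\theta$ nonvanishing argument, Wiener--Ikehara, and partial summation back to $\pi_{\K}$ --- and every step you describe is sound. Two places where your sketch leans on inputs at least as deep as anything you actually carry out: (i) the continuation of $\zeta_{\K}(s)-\kappa/(s-1)$ past the line $\Re s=1$ requires not just the main term $\kappa x$ in the ideal count but a power-saving error term $O\bigl(x^{1-1/[\K:\Q]}\bigr)$, which is itself a nontrivial lattice-point theorem (your parenthetical ``one needs only that the main term is $\kappa x$'' undersells this; the subtraction argument you then invoke silently uses the error bound); and (ii) the Wiener--Ikehara theorem is quoted, not proved. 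Neither is a gap in the logic --- this is exactly how the theorem is established in the literature --- but a reader should understand that the proposal is a correct roadmap through standard heavy machinery rather than a self-contained proof, which is presumably why the paper elects simply to cite the result.
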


Thus, the asymptotic distribution of prime ideals (by norm) in a number field mirrors that of the prime numbers
in the integers.  Therefore, it is not surprising to find an analogue of Mertens' theorem (Theorem \ref{Theorem:Mertens})
that holds for prime ideals \cite[Lemma 2.4]{Rosen} or \cite[Prop.~2]{Lebacque}.

\begin{theorem}[Mertens theorem for number fields]\label{Theorem:AlgebraicMertens}
If $\K$ is an algebraic number field, then there is a constant $C$ such that
\begin{equation*}
\sum_{N(\PP) \leq x} \frac{1}{N(\PP)} = \log \log x + C + O\left(\frac{1}{\log x} \right),
\end{equation*}
in which the sum runs over all nonzero prime ideals $\PP$ in $\O_{\K}$ of norm at most $x$.
\end{theorem}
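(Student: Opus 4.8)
The plan is to obtain the asymptotic by partial summation (Abel summation in its Riemann--Stieltjes form) applied directly to the prime-ideal counting function $\pi_{\K}$, exactly as one proves the classical Mertens theorem from the prime number theorem. Writing the sum as a Stieltjes integral against $\pi_{\K}$ and integrating by parts gives
\begin{equation*}
\sum_{N(\PP) \leq x} \frac{1}{N(\PP)} = \int_{2^-}^x \frac{d\pi_{\K}(t)}{t} = \frac{\pi_{\K}(x)}{x} + \int_2^x \frac{\pi_{\K}(t)}{t^2}\,dt,
\end{equation*}
where the boundary term at $2^-$ vanishes because the smallest norm of a prime ideal is at least $2$. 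Note that this identity already accounts for every prime ideal, so the higher-degree primes (those with $N(\PP) = p^f$, $f \geq 2$) require no separate treatment.

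First I would feed in a quantitative form of the Prime Ideal Theorem, namely $\pi_{\K}(t) = \Li(t) + E_{\K}(t)$ with an error term of de la Vall\'ee Poussin type; for our purposes even $E_{\K}(t) = O\!\left(t/(\log t)^2\right)$ suffices, and such a bound is standard in the references cited for the Prime Ideal Theorem. Substituting and using $\Li'(t) = 1/\log t$, an integration by parts yields
\begin{equation*}
\int_2^x \frac{\Li(t)}{t^2}\,dt = -\frac{\Li(x)}{x} + \int_2^x \frac{dt}{t \log t} = -\frac{\Li(x)}{x} + \log\log x - \log\log 2,
\end{equation*}
since $\Li(2) = 0$. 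The crucial feature is that, when this is combined with the leading part $\Li(x)/x$ of $\pi_{\K}(x)/x$, the two copies of $\Li(x)/x$ cancel, leaving
\begin{equation*}
\sum_{N(\PP) \leq x} \frac{1}{N(\PP)} = \log\log x - \log\log 2 + \frac{E_{\K}(x)}{x} + \int_2^x \frac{E_{\K}(t)}{t^2}\,dt.
\end{equation*}

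It remains to read off the constant and the error. Since $E_{\K}(t)/t^2 = O\!\left(1/(t(\log t)^2)\right)$ is integrable at infinity, the integral $\int_2^\infty E_{\K}(t)/t^2\,dt$ converges and is absorbed into the constant, while its tail satisfies $\int_x^\infty E_{\K}(t)/t^2\,dt = O(1/\log x)$ and $E_{\K}(x)/x = O(1/(\log x)^2)$. Setting $C = -\log\log 2 + \int_2^\infty E_{\K}(t)/t^2\,dt$ then gives precisely the claimed expansion with error $O(1/\log x)$.

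The main obstacle is that the Prime Ideal Theorem as stated above is only the bare asymptotic $\pi_{\K}(x) \sim \Li(x)$, which is not strong enough: asymptotic equivalence alone produces the main term $\log\log x$ and convergence to a constant, but it does not control the rate and so cannot deliver $O(1/\log x)$. The real content of the proof is therefore the effective error term in the Prime Ideal Theorem. An alternative that shifts the burden onto the classical Mertens theorem (Theorem \ref{Theorem:Mertens}) is to compare with $\Q$ directly: splitting off the degree-one primes via Corollary \ref{Corollary:Ideals} and writing $a_p$ for the number of prime ideals of norm exactly $p$, one bounds $\sum_{f \geq 2} 1/N(\PP)$ by a convergent sum (there are at most $[\K:\Q]$ primes above each $p$ and only finitely many admissible $f$), and then shows $\sum_{p \leq x}(a_p - 1)/p$ converges by partial summation, the point being that the main terms in the asymptotics for $\sum_{p\le x} a_p$ and for $\pi(x)$ both equal $\Li(x)$ and hence cancel. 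Either route ultimately relies on quantitative prime-counting estimates beyond the stated $\sim$, which is where the genuine difficulty lies.
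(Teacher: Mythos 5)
Your derivation is correct, but there is nothing in the paper to compare it against: Theorem \ref{Theorem:AlgebraicMertens} is stated without proof and simply cited from the literature (Rosen and Lebacque), so you have supplied an argument where the paper defers to references. What you wrote is essentially the standard derivation that those references carry out. The mechanics all check: the Stieltjes integration by parts with vanishing boundary term at $2^-$, the cancellation of the two copies of $\Li(x)/x$, the evaluation $\int_2^x \frac{dt}{t\log t} = \log\log x - \log\log 2$ using $\Li(2)=0$, the absorption of $\int_2^\infty E_{\K}(t)\,t^{-2}\,dt$ into the constant with tail $O(1/\log x)$, and your observation that higher-degree prime ideals need no separate treatment because $\pi_{\K}$ already counts by norm. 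You also correctly isolate the one real dependency: the Prime Ideal Theorem as stated in the paper is only the asymptotic $\pi_{\K}(x)\sim\Li(x)$, which by your argument yields $\log\log x + C + o(1)$ but not the claimed $O(1/\log x)$. This is not a gap so much as a citation to be made precise: Landau's original form of the Prime Ideal Theorem comes with a de la Vall\'ee Poussin error term $E_{\K}(x)=O\bigl(x\exp(-c\sqrt{\log x})\bigr)$, far stronger than the $O\bigl(x/(\log x)^2\bigr)$ you need, so invoking the quantitative version closes the proof completely. It is worth noting that for the only use the paper makes of this theorem---the convergence of $\sum_p(\omega(p)-1)/p$ in Lemma \ref{Lemma:Converge}, which requires only that the difference of the two Mertens-type sums tend to a limit---even the weaker $o(1)$ conclusion obtainable from the bare asymptotic would suffice. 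Your alternative route via Corollary \ref{Corollary:Ideals}, splitting off degree-one primes and comparing with the classical Mertens theorem, is also sound and is in fact closer in spirit to how the paper actually deploys the result in Lemma \ref{Lemma:Converge}.
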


We are now in a position to prove the following convergence result
(recall that $p$ always denotes a prime number and that $\sum_p$ means that we sum over all primes).

\begin{lemma}\label{Lemma:Converge}
Let $g(x) \in \Z[x]$ be monic irreducible. For each rational prime $p$, let 
$\omega(p)$ denote the number of solutions to
$g(x) \equiv 0\pmod{p}$.
Then 
\begin{equation*}
\sum_{p} \frac{\omega(p) - 1}{p}
\end{equation*}
converges.
\end{lemma}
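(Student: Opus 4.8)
The plan is to reinterpret $\omega(p)$ in terms of prime ideals and then exploit the cancellation between two Mertens-type asymptotics. Fix a root $\theta$ of $g$, set $\K = \Q(\theta)$, and let $d = [\K:\Q] = \deg g$. By Corollary \ref{Corollary:Ideals}, for all but finitely many rational primes $p$ the quantity $\omega(p)$ equals the number of prime ideals $\PP \subset \O_{\K}$ of norm exactly $p$; that is, the number of degree-one primes lying above $p$. The guiding idea is that the sum $\sum_p \omega(p)/p$ should therefore behave like the prime-ideal analogue of Mertens' theorem, whose divergent $\log\log x$ part exactly matches that of the ordinary prime sum $\sum_p 1/p$.

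First I would control the prime ideals of higher norm. Theorem \ref{Theorem:AlgebraicMertens} gives
\begin{equation*}
\sum_{N(\PP) \leq x} \frac{1}{N(\PP)} = \log\log x + C + O\left(\frac{1}{\log x}\right),
\end{equation*}
where the sum runs over all nonzero prime ideals of $\O_{\K}$. Splitting this sum according to whether $N(\PP) = p$ or $N(\PP) = p^f$ with $f \geq 2$, I claim the second piece converges. Indeed, at most $d$ prime ideals lie above each rational prime $p$, and any contributing a norm $p^f$ with $f \geq 2$ has $N(\PP) \geq p^2$, so the higher-norm contribution is dominated by $d \sum_p p^{-2} < \infty$, with tail beyond $x$ of size $O(1/\log x)$. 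Writing $S(x)$ for the partial sum over degree-one primes (those with $N(\PP) = p \leq x$), it follows that $S(x) = \log\log x + C' + O\left(\tfrac{1}{\log x}\right)$ for some constant $C'$.

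Next I would pass from $S(x)$ back to $\sum_{p \leq x} \omega(p)/p$. Since $\omega(p)$ agrees with the number of norm-$p$ prime ideals outside a finite exceptional set of primes, the two sums differ by a bounded quantity, whence
\begin{equation*}
\sum_{p \leq x} \frac{\omega(p)}{p} = \log\log x + C'' + O\left(\frac{1}{\log x}\right)
\end{equation*}
for some constant $C''$. Subtracting the classical Mertens asymptotic (Theorem \ref{Theorem:Mertens}) cancels the $\log\log x$ terms:
\begin{equation*}
\sum_{p \leq x} \frac{\omega(p) - 1}{p} = (C'' - B) + O\left(\frac{1}{\log x}\right).
\end{equation*}
Letting $x \to \infty$, the partial sums converge to $C'' - B$, which is exactly the assertion.

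The main obstacle is careful bookkeeping rather than any deep estimate: one must verify that the higher-degree prime ideals and the finitely many exceptional primes (those where Corollary \ref{Corollary:Ideals} can fail, including primes dividing $[\O_{\K} : \Z[\theta]]$ and ramifying primes) together contribute only a bounded amount, so that they may be absorbed into the additive constant. I would also flag that the series is in general only conditionally convergent, since $(\omega(p)-1)/p$ is negative whenever $\omega(p) = 0$; thus ``converges'' must be read as convergence of the partial sums ordered by increasing $p$, which is precisely what the displayed asymptotic delivers.
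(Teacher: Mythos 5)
Your proposal is correct and follows essentially the same route as the paper: apply Corollary \ref{Corollary:Ideals} to identify $\omega(p)$ with the number of degree-one prime ideals above $p$, then subtract the classical Mertens asymptotic from the number-field Mertens asymptotic so that the $\log\log x$ terms cancel. If anything you are slightly more careful than the paper, which silently absorbs both the finitely many exceptional primes and the prime ideals of norm $p^f$ with $f\geq 2$ into a single additive constant, whereas you bound the latter contribution explicitly by $d\sum_p p^{-2}$.
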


\begin{proof}
Let $\K = \Q(\theta)$, in which $\theta$ is a root of $g$.
Then Corollary \ref{Corollary:Ideals} implies that
\begin{equation*}
\sum_{p \leq x} \frac{\omega(p)}{p} = \sum_{N(\PP) \leq x} \frac{1}{N(\PP)} + A,
\end{equation*}
in which the constant $A$ arises from the finitely many rational primes $p$ that are excluded
from Corollary \ref{Corollary:Ideals}.
Theorems \ref{Theorem:Mertens} and \ref{Theorem:AlgebraicMertens} imply that
\begin{align*}
\sum_{p \leq x} \frac{\omega(p) - 1}{p}
&= \sum_{N(\PP) \leq x} \frac{1}{N(\PP)} - \sum_{p\leq x} \frac{1}{p}  + A \\
&= \left[ \log \log x + C + O\left(\frac{1}{\log x} \right) \right] 
- \left[\log \log x + B + O\left(\frac{1}{\log x} \right) \right] + A\\
&= A-B+C + O\left(\frac{1}{\log x} \right) 
\end{align*}
converges to $A-B+C$ as $x \to \infty$.
\end{proof}

\subsection{Convergence of the product}

We are now ready to prove the convergence of the product \eqref{eq:C} that defines
the Bateman--Horn constant. Let
$f_1,f_2,\ldots,f_k \in \Z[x]$ be irreducible
and define $f = f_1f_2\cdots f_k$.  Let
$\omega_i(p)$ and $\omega(p)$ denote the number of solutions in $\Z/p\Z$ to $f_i(x) \equiv 0 \pmod{p}$
and $f(x) \equiv 0 \pmod{p}$, respectively.

\begin{lemma} \label{Lemma:OmegaSplit} 
For all but finitely many primes $p$,
\begin{equation}\label{eq:OmegaSum}
\omega(p) = \omega_1(p) + \cdots + \omega_k(p).
\end{equation}
\end{lemma}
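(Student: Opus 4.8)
The plan is to show that the number of roots of $f=f_1f_2\cdots f_k$ modulo $p$ equals the sum of the numbers of roots of the individual factors, for all but finitely many primes $p$. The key observation is that $a \in \Z/p\Z$ is a root of $f$ modulo $p$ if and only if it is a root of at least one $f_i$ modulo $p$. Thus \eqref{eq:OmegaSum} will hold precisely when no residue $a$ is a common root of two distinct factors $f_i$ and $f_j$ modulo $p$, because such a shared root would be counted once on the left but at least twice on the right, breaking the equality. So the entire problem reduces to controlling simultaneous roots of distinct factors.

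First I would fix a pair $i \neq j$ and argue that $f_i$ and $f_j$ share a root modulo $p$ for only finitely many $p$. The natural tool is the resultant $R_{ij} = \operatorname{Res}(f_i,f_j) \in \Z$. Since $f_i$ and $f_j$ are distinct irreducible polynomials in $\Z[x]$, they are coprime in $\Q[x]$, and hence their resultant is a \emph{nonzero} integer. A standard property of the resultant is that $f_i$ and $f_j$ have a common root modulo $p$ only if $p \mid R_{ij}$ (one must be slightly careful here: reduction modulo $p$ can drop the degree of a factor if $p$ divides a leading coefficient, so the clean statement requires also excluding the finitely many primes dividing the leading coefficients of the $f_i$). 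Since $R_{ij} \neq 0$, only finitely many primes divide it.

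Next I would take the union over all $\binom{k}{2}$ pairs of the (finite) bad sets of primes, together with the finitely many primes dividing any leading coefficient of the $f_i$. This union is finite. For every prime $p$ outside this finite set, no two distinct factors share a root modulo $p$, so the root sets of $f_1,\ldots,f_k$ in $\Z/p\Z$ are pairwise disjoint, and the roots of $f$ are exactly their disjoint union. Counting gives \eqref{eq:OmegaSum} immediately for all such $p$.

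I expect the main obstacle to be the precise bookkeeping around reduction modulo $p$ rather than any deep idea. The resultant criterion is cleanest over a field, so I must make sure the degrees of the $f_i$ do not collapse when passing to $\Z/p\Z$; this is why the leading coefficients must be excluded along with the divisors of the resultants. Once those finitely many exceptional primes are removed, the argument is purely a disjointness-of-root-sets count, and no analytic input is needed. (Irreducibility of each $f_i$ guarantees distinct $f_i,f_j$ are coprime in $\Q[x]$, which is exactly what makes each $R_{ij}$ nonzero; this is where the hypothesis that the $f_i$ are distinct and irreducible is used.)
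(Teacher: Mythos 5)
Your proposal is correct and follows essentially the same route as the paper: both arguments reduce the lemma to showing that two distinct irreducible factors $f_i,f_j$ can share a root modulo $p$ for only finitely many primes $p$, and then count the roots of $f$ as a disjoint union. The paper certifies the pairwise disjointness with an explicit B\'ezout identity $g_{ij}(x)f_i(x)+h_{ij}(x)f_j(x)=d_{ij}$ over $\Z$ (so a common root modulo $p$ forces $p\mid d_{ij}$), which is exactly the nonzero-integer obstruction your resultant $R_{ij}$ packages; in fact your caveat about leading coefficients is unnecessary for the direction you need, since $R_{ij}=A(x)f_i(x)+B(x)f_j(x)$ with $A,B\in\Z[x]$ already yields $p\mid R_{ij}$ from any common integer root.
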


\begin{proof}
Since $p$ is prime, each zero of $f$ in $\Z/p\Z$
is a zero of some $f_i$.  Thus,
\begin{equation*}
\omega(p) \leq \omega_1(p) + \cdots + \omega_k(p).
\end{equation*}
On the other hand, every zero of each $f_i$ in $\Z/p\Z$ is a zero of $f$.
Hence it suffices to show that
$f_i$ and $f_j$ have no common zeros in $\Z/p\Z$ if $p$ is sufficiently large.
Since the polynomials $f_i(x)$ are irreducible in $\Z[x]$ they are irreducible in $\Q[x]$. 
If $i \neq j$ then $\gcd(f_i,f_j) = 1$ in $\Q[x]$, which is a Euclidean domain. Hence there exist 
polynomials $u_{ij}(x)$ and $v_{ij}(x)$ in $\Q[x]$ such that
$$u_{ij}(x) f_i(x) + v_{ij}(x) f_j(x) = 1.$$
Let $d_{ij}$ be the least common denominator of the coefficients of $u_{ij}(x)$ and $v_{ij}(x)$,
then $g_{ij}(x) = d_{ij} u_{ij}(x)$ and $h_{ij}(x) = d_{ij} v_{ij}(x)$ are in $\Z[x]$, and we have:
$$g_{ij}(x) f_i(x) + h_{ij}(x) f_j(x) = d_{ij}.$$
Suppose that $f_i(x)\ \md p$ and $f_j(x)\ \md p$ have a common root $r \in \Z/p\Z$ for some prime $p$.
Substituting $r$ for $x$ in the equation above and reducing modulo $p$ yields
$$d_{ij} \equiv 0\ (\md p),$$
meaning that $p$ divides $d_{ij}$, which is only possible for finitely many primes $p$, e.g. $p$ has to be smaller than $d_{ij}$. 
Hence for all sufficiently large primes $p$ the polynomials $f_i$ and $f_j$ have no common zeros in $\Z/p\Z$. This completes proof.
\end{proof}

The product that defines the Bateman--Horn constant need not converge absolutely.
Consequently, we must take care to justify its convergence.  
We are now ready to prove the main result of this section.

\begin{theorem}\label{Theorem:ProductConverge}
The product that defines $C(f_1,f_2,\ldots,f_k)$ converges.
\end{theorem}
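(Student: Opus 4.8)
The plan is to replace the infinite product by an infinite series, which is easier to handle, and then to exploit the cancellation already packaged in Lemma~\ref{Lemma:Converge}. Write the $p$-th factor as $1+a_p$, where
\[
1 + a_p = \left(1 - \frac{1}{p}\right)^{-k}\left(1 - \frac{\omega(p)}{p}\right).
\]
(Only finitely many factors can misbehave: a zero factor requires $\omega(p)=p$, which is excluded by the Bateman--Horn hypotheses, and since $\omega(p)\le\deg f$ this cannot happen for $p>\deg f$ anyway.) Because $\omega(p)$ is bounded, the expansion $(1-1/p)^{-k}=1+k/p+O(1/p^2)$ yields
\[
a_p = \frac{k-\omega(p)}{p} + O\!\left(\frac{1}{p^2}\right),
\]
so $a_p=O(1/p)$ and hence $\sum_p|a_p|^2<\infty$. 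By Lemma~\ref{Lemma:Trick} the product converges if and only if $\sum_p a_p$ converges, and since the $O(1/p^2)$ remainder is absolutely convergent, this reduces to the convergence of $\sum_p (k-\omega(p))/p$.

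This last series is the heart of the matter, and the difficulty is that it is only \emph{conditionally} convergent: the pieces $\sum_p k/p$ and $\sum_p \omega(p)/p$ each diverge like a constant times $\log\log x$, so convergence can come only from cancellation between them. To expose that cancellation I would apply Lemma~\ref{Lemma:OmegaSplit}, which gives $\omega(p)=\omega_1(p)+\cdots+\omega_k(p)$ for all but finitely many $p$. For such $p$,
\[
\frac{k-\omega(p)}{p} = -\sum_{i=1}^k \frac{\omega_i(p)-1}{p},
\]
so after adjusting finitely many terms it suffices to show that each $\sum_p (\omega_i(p)-1)/p$ converges, the outer sum over $i$ being finite.

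One gap must be bridged before Lemma~\ref{Lemma:Converge} applies, since it is stated for a \emph{monic} irreducible polynomial whereas $f_i$ need only be irreducible. I would pass to a monic model: if $c_i>0$ is the leading coefficient of $f_i$ and $\theta_i$ a root, then $c_i\theta_i$ is an algebraic integer with monic minimal polynomial $g_i\in\Z[x]$, and concretely one may take $g_i(y)=c_i^{\deg f_i-1}f_i(y/c_i)$. For every $p\nmid c_i$ the substitution $x\mapsto c_i x$ is a bijection on $\Z/p\Z$ carrying roots of $f_i$ to roots of $g_i$, so $\omega_i(p)=\omega_{g_i}(p)$ off a finite set of primes. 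Hence $\sum_p(\omega_i(p)-1)/p$ and $\sum_p(\omega_{g_i}(p)-1)/p$ converge together, and Lemma~\ref{Lemma:Converge} applied to $g_i$ furnishes the former. Combining these facts shows $\sum_p a_p$ converges, whence the product converges.

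The main obstacle is therefore not a single computation but the conditional nature of the convergence: one cannot bound the terms $(k-\omega(p))/p$ absolutely, and the decisive cancellation is precisely the matching of the two $\log\log x$ divergences effected by Lemma~\ref{Lemma:Converge} (which in turn pairs the classical Mertens theorem with its number-field analogue). Beyond invoking that lemma, the only genuinely new bookkeeping is the reduction to monic polynomials and the careful excision, at each stage, of the finitely many exceptional primes.
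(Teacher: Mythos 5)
Your proof is correct and follows essentially the same route as the paper's: the binomial expansion of each factor, Lemma~\ref{Lemma:Trick} to reduce the product to the series $\sum_p (k-\omega(p))/p$, and Lemmas~\ref{Lemma:OmegaSplit} and~\ref{Lemma:Converge} to supply the cancellation between the two $\log\log x$ divergences. The one place you go beyond the paper is the passage from each $f_i$ to the monic model $g_i(y)=c_i^{\deg f_i-1}f_i(y/c_i)$ before invoking Lemma~\ref{Lemma:Converge}; the paper applies that lemma to the $f_i$ directly even though it is stated only for monic polynomials, so your extra step correctly fills a detail the paper leaves implicit.
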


\begin{proof}
Lemma \ref{Lemma:OmegaSplit} implies that
\begin{equation*}
\sum_{p\leq x} \frac{\omega(p) - k}{p} = \sum_{i=1}^k \sum_{p \leq x} \frac{\omega_i(p) - 1}{p}  + D
\end{equation*}
for all $x \geq 0$; the constant $D$ arises because of the finitely many exceptions to \eqref{eq:OmegaSum}.
The preceding lemma and Lemma \ref{Lemma:Converge} ensure that
\begin{equation}\label{eq:WeDidThat}
\sum_{p} \frac{k - \omega(p)}{p} \quad \text{converges}.
\end{equation}
Then a binomial expansion yields 
\begin{equation*}
\left(1 - \frac{1}{p} \right)^{-k} \left(1 - \frac{\omega(p)}{p} \right) = 1 + \frac{k - \omega(p)}{p} 
+ \frac{B(p)}{p^2} ,
\end{equation*}
in which
\begin{equation*}
B(p) = \frac{k(k-1)}{2} - \omega(p) + O\left( \frac{1}{p}\right)
\end{equation*}
is uniformly bounded because $|\omega(p)| \leq \deg f$.
Let
\begin{equation*}
a_p = \frac{k - \omega(p)}{p} + \frac{B(p)}{p^2}
\end{equation*}
and observe that 
\begin{equation*}
\sum_p a_p = \sum_p\frac{k - \omega(p)}{p} + \sum_p \frac{B(p)}{p^2}
\end{equation*}
converges by \eqref{eq:WeDidThat} and the comparison test.  Since
\begin{equation*}
|a_p|^2 
= \left(\frac{k - \omega(p)}{p} + \frac{B(p)}{p^2} \right)^2
= \frac{(k - \omega(p))^2}{p^2} + \frac{2B(p)(k - \omega(p))}{p^3} + \frac{B(p)^2}{p^4},
\end{equation*}
the comparison test ensures that $\sum_p |a_p|^2$ converges.
Consequently, Lemma \ref{Lemma:Trick} tells us that
$\prod_p (1+a_p)$, the product that defines $C(f_1,f_2,\ldots,f_k)$, converges.
\end{proof}

The preceding argument, first envisioned in its general form by Bateman and Horn (but also in some special cases by Nagell (1921), Rademacher (1924) and Ricci (1937); see~\cite{DavenportSchinzel} for a discussion), shows that the constant $C(f_1,f_2,\ldots,f_k)$ is well defined.
However it is still hard to compute due to the fact that the convergence of the product in question is not necessarily absolute or rapid. 
This consideration leaves an open problem: express the constant $C(f_1,f_2,\ldots,f_k)$ in terms of an absolutely convergent product. This was done in some special cases in a subsequent paper~\cite{BatemanHorn-2} of Bateman and Horn, and then generally by Davenport and Schinzel~\cite{DavenportSchinzel}. 
Several methods to accelerate the convergence rate of infinite products 
for approximation purposes use $L$-functions; see~\cite{moree, Jacobson}.

\section{Single polynomials}\label{Section:Single}

The Bateman--Horn conjecture implies a wide range of known theorems and 
unproved conjectures.  In this section we examine several such results in the case of a single polynomial. 
This provides us with some practical experience computing Bateman--Horn constants
and it also highlights some delicate convergence issues.
Applications of the Bateman--Horn conjecture to families of two or more polynomials
are studied in Section \ref{Section:Multiple}. 

\subsection{Prime number theorem for arithmetic progressions}
In 1837, Peter Gustav Lejeune Dirichlet (1805--1859) proved 
that if $a$ and $b$ are relatively prime natural numbers, 
then there are infinitely many primes of the form $at+b$, in which $t\in \N$.
For example, there are infinitely many primes that end in $123{,}456{,}789$.
To see this, apply Dirichlet's result with $a = 10{,}000{,}000$ and
$b = 123{,}456{,}789$.\footnote{The values of $t \leq 100$ for which
$at+b$ is prime are $11, 29, 43, 50, 59, 64, 68, 73, 97, 98$.}

Let $\pi_{a,b}(x)$ denote the number of primes at most $x$ that are of the form $at+b$.
The complex-variables proof of the prime number theorem can be modified to provide the following
asymptotic formulation of Dirichlet's result \cite{TaoDirichlet} (see \cite{Selberg} and
the discussion on \cite[p.~236]{Pollack} for information about elementary approaches).

\begin{theorem}[Prime Number Theorem for Arithmetic Progressions]\label{Theorem:PNTAP}
If $a$ and $b$ are relatively prime natural numbers, then
\begin{equation}\label{eq:PNTAP}
\pi_{a,b}(x) \ \sim \ \frac{1}{\phi(a)} \Li(x).
\end{equation}
\end{theorem}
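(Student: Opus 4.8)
The plan is to adapt the complex-analytic proof of the prime number theorem (Theorem \ref{Theorem:PNT}) by replacing the Riemann zeta function with the family of Dirichlet $L$-functions attached to characters modulo $a$. Recall that a \emph{Dirichlet character} modulo $a$ is a homomorphism $\chi\colon (\Z/a\Z)^\times \to \C^\times$, extended to all of $\Z$ by setting $\chi(n)=0$ when $\gcd(n,a)>1$; there are exactly $\phi(a)$ of them, and whenever $\gcd(b,a)=1$ they satisfy the orthogonality relation
\begin{equation*}
\frac{1}{\phi(a)}\sum_{\chi} \overline{\chi}(b)\,\chi(n) = \begin{cases} 1 & \text{if } n\equiv b \pmod{a}, \\ 0 & \text{otherwise.}\end{cases}
\end{equation*}
This identity lets me isolate the progression $b \pmod{a}$ as a linear combination over all characters. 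First I would pass from $\pi_{a,b}(x)$ to the Chebyshev-type sum $\psi_{a,b}(x) = \sum_{n\le x,\ n\equiv b} \Lambda(n)$, where $\Lambda$ is the von Mangoldt function, since $\psi$ is far more amenable to analytic techniques; partial summation recovers the counting function at the end.

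Next I would bring in the Dirichlet $L$-functions
\begin{equation*}
L(s,\chi)=\sum_{n=1}^{\infty}\frac{\chi(n)}{n^s}=\prod_p\left(1-\frac{\chi(p)}{p^s}\right)^{-1},\qquad \Re s>1,
\end{equation*}
whose logarithmic derivatives encode $\sum_n \chi(n)\Lambda(n)\, n^{-s}$ through the Euler product. Applying orthogonality gives
\begin{equation*}
\sum_{n\le x,\ n\equiv b}\Lambda(n) = \frac{1}{\phi(a)}\sum_{\chi}\overline{\chi}(b)\sum_{n\le x}\chi(n)\Lambda(n),
\end{equation*}
so the asymptotics of $\psi_{a,b}$ are governed by the behavior of each $L(s,\chi)$ near the line $\Re s = 1$. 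The crucial structural facts are these: for the principal character $\chi_0$, the function $L(s,\chi_0)$ differs from the Riemann zeta function by only finitely many Euler factors and so inherits a simple pole at $s=1$; while for every non-principal $\chi$ the function $L(s,\chi)$ extends analytically across $\Re s =1$ and, most importantly, $L(1,\chi)\neq 0$.

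With these facts in hand, I would run the same Tauberian argument that yields the prime number theorem---either the Wiener--Ikehara theorem or a Perron-formula contour shift past the line $\Re s = 1$. The pole of $L(s,\chi_0)$ at $s=1$ contributes the main term $x$, weighted by $\frac{1}{\phi(a)}\overline{\chi_0}(b)=\frac{1}{\phi(a)}$, while every non-principal character contributes only a lower-order term, precisely because $L(1,\chi)\neq 0$ removes any pole on $\Re s =1$. This yields $\psi_{a,b}(x)\sim x/\phi(a)$. A final application of Abel summation converts the $\Lambda$-weighted count into $\pi_{a,b}(x)$ and, exactly as in the passage from $\psi(x)\sim x$ to $\pi(x)\sim\Li(x)$, replaces $x/\phi(a)$ by $\frac{1}{\phi(a)}\Li(x)$.

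The hard part is the non-vanishing $L(1,\chi)\neq 0$ for non-principal characters: this is the one ingredient with no analogue in the proof of the prime number theorem that uses only the zeta function, and it is exactly what guarantees that the primes equidistribute among the $\phi(a)$ reduced residue classes rather than clustering. The case of complex $\chi$ follows from a short argument using the nonnegativity of the Dirichlet coefficients of $\prod_\chi L(s,\chi)$, but the case of real (quadratic) $\chi$ is genuinely delicate and classically requires either a Dirichlet-series positivity argument or an appeal to the class number formula.
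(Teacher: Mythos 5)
Your outline is correct and is precisely the argument the paper points to: the text does not prove Theorem~\ref{Theorem:PNTAP} itself but cites ``the complex-variables proof of the prime number theorem \ldots modified'' via Dirichlet $L$-functions, which is exactly your route (character orthogonality, the pole of $L(s,\chi_0)$ giving the main term, non-vanishing of $L(1,\chi)$ for non-principal $\chi$, a Tauberian step, and partial summation back to $\pi_{a,b}$ and $\Li$). The paper's subsequent computation with $f(t)=at+b$ only derives the statement conditionally from the Bateman--Horn conjecture, so your unconditional sketch is the appropriate proof and matches the cited approach.
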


Here 
\begin{equation*}
\phi(n) \ = \ 
\#\big\{ k \in \{1,2,\ldots,n\} : \gcd(k,n) = 1\big\}
\end{equation*}
denotes the \emph{Euler totient function}.  Its value equals the order of the group
$(\Z/n\Z)^{\times}$ of units in $\Z/n\Z$.  The totient function is multiplicative, in the sense that
$\phi(mn) = \phi(m) \phi(n)$ whenever $\gcd(m,n) = 1$.  It enjoys the product decomposition
\begin{equation}\label{eq:PhiProduct}
\phi(n) = n\prod_{p|n} \left(1 - \frac{1}{p} \right),
\end{equation}
in which the expression $p|n$ denotes that the product is taken over all primes $p$ that divide $n$.
For example, $\phi(6) = 2$ since only $1$ and $5$ are in the range $\{1,2,\ldots,6\}$ and relatively prime to $6$.
The product formulation \eqref{eq:PhiProduct} tells us the same thing:
\begin{equation*}
\phi(6) = 6 (1 - 1/2)(1 - 1/3) = 6(\tfrac{1}{2})(\tfrac{2}{3}) = 2.
\end{equation*}

What is the intuitive explanation behind the prime number theorem for arithmetic progressions?
If $\gcd(a,b) \neq 1$, then $a$ and $b$ share a common factor and hence
$at+b$ is prime for at most one $t$.  Thus, $\gcd(a,b) = 1$ is a necessary condition for the polynomial $at+b$
to generate infinitely many primes.  For each fixed $a$, 
this yields exactly $\phi(a)$ admissible values of $b \pmod{a}$.
Since the prime number theorem tells us that $\pi(x) \sim \Li(x)$,
\eqref{eq:PNTAP} tells us that each of the $\phi(a)$ admissible congruence classes modulo $a$
receives an approximately equal share of primes.

The prime number theorem for arithmetic progressions
(Theorem \ref{Theorem:PNTAP}) is a straightforward consequence of the Bateman--Horn conjecture.
Let $f(t) = at+b$, in which $\gcd(a,b) = 1$.  Then 
\begin{equation}\label{eq:DAPCong}
f(t) \equiv 0 \pmod{p}
\quad\iff\quad
at \equiv - b \pmod{p}.
\end{equation}
If $p \nmid a$, then $a$ is invertible modulo $p$ and the preceding congruence has a unique solution.
If $p|a$, then \eqref{eq:DAPCong} has no solutions since $\gcd(a,b)=1$.  Therefore,
\begin{equation*}
\omega_f(p) =
\begin{cases}
1 & \text{if $p \nmid a$},\\
0 & \text{if $p| a$},
\end{cases}
\end{equation*}
and hence
\begin{equation*}
C(f;p) 
= \prod_p \left( 1 - \frac{1}{p} \right)^{-1}
\left(1- \frac{\omega_f(p)}{p} \right) 
= \prod_{p|a} \left( 1 - \frac{1}{p} \right)^{-1}
= \frac{a}{\phi(a)}
\end{equation*}
by \eqref{eq:PhiProduct}.  In particular, the potentially infinite product reduces to a finite product
indexed only over the prime divisors of $a$.  
Since
\begin{equation*}
at+b \leq x 
\quad\iff\quad
t \leq \frac{x-b}{a},
\end{equation*}
we have
\begin{align*}
\pi_{a,b}(x)
&= Q\bigg(f;\frac{x-b}{a}\bigg)\\
&\sim \frac{a}{\phi(a)} \cdot \frac{(x-b)/a}{\log((x-b)/a)} \\
&= \frac{a}{\phi(a)} \cdot \frac{(x/a) - (b/a)}{\log(x-b) - \log a} \\
&\sim \frac{a}{\phi(a)} \cdot  \frac{x/a}{\log(x-b)} \\
&\sim \frac{x}{\phi(a)\log x} \sim \frac{1}{\phi(a)} \Li(x),
\end{align*}
which is the desired result. 

The weaker statement about simply the infinitude of primes in an arithmetic progression is a special case of the Bunyakovsky conjecture and is currently the only case of this conjecture that has been settled. 
The conjecture is open for quadratic and cubic polynomials, as we discuss next.

\subsection{Landau's conjecture and its relatives}\label{Section:Landau}
In our heuristic argument (Section \ref{Section:Heuristic}),
we explained how Landau's conjecture (there are infinitely many primes of the form $n^2+1$)
follows from the Bateman--Horn conjecture.  For $f(t) = t^2+1$, we showed that
\begin{equation*}
Q(f;x) \ \sim \ (0.68640\ldots) \Li(x);
\end{equation*}
in particular, the conjecture suggests that Landau's intuition was correct.  
Let $\pi_{\text{Landau}}(x)$ denote the number of primes of the form $n^2+1$ that are at most $x$.  Since
\begin{equation*}
t^2 + 1 \leq x
\quad \iff \quad
t \leq \sqrt{x-1},
\end{equation*}
it follows that
\begin{align*}
\pi_{\text{Landau}}(x)
&= Q(f; \sqrt{x-1}) \sim (0.68640\ldots) \Li(\sqrt{x-1}) \\
&\sim (0.68640\ldots) \frac{\sqrt{x-1}}{\log(\sqrt{x-1})} \\
& \sim ( 1.3728\ldots) \frac{\sqrt{x}}{\log x}.
\end{align*}
Thus, $\pi_{\text{Landau}}(x)$ grows like a constant times $\pi(x) / \sqrt{x}$.  

The Bateman--Horn conjecture also implies important variants of Landau's conjecture.
For example, Friedlander and Iwaniec proved that there are infinitely many primes of the form
$x^2 + y^4$ (they also provided asymptotics for the counting function of such primes) \cite{FI2}.
For each fixed $y\geq 1$, the Bateman--Horn conjecture suggests that there are infinitely many primes of the
form $x^2 + y^4$.  
A result of Heath-Brown~\cite{heath-brown} guarantees the existence of infinitely many primes
(with an asymptotic formula for the growth of their number) of the form $x^3+2y^3$, thereby confirming the conjecture of Hardy and Littlewood on the infinitude of primes expressible as a sum of three cubes. These are results in the interesting and promising direction of representing primes
by multivariate polynomials, see the survey \cite{moroz} 
and the recent preprint \cite{Destagnol}.

Let us briefly turn to cubic polynomials in one variable. A result of~\cite{foo_zhao} states, roughly speaking, that on the average polynomials of the form $t^3+k$ for \highlight{squarefree $k > 1$} assume infinitely many prime values at integer points, in some well-defined sense.
We are not aware of a definitive published result on any specific example of such a polynomial: an existence of infinitude of prime values of a cubic polynomial is a special case of the Bunyakovsky conjecture that is sometimes called the ``cubic primes conjecture.'' 
For example, $f(t) = t^3-2$ is irreducible and does not vanish identically modulo any prime.
The Bateman--Horn conjecture predicts that this polynomial assumes prime values infinitely often.

\subsection{Tricking Bateman--Horn?}
What happens if we replace $n^2+1$ with $n^2-1 = (n-1)(n+1)$?
The only prime of this form is $3$.
Of course, the polynomial in question is reducible and hence is
not even a permissible candidate for the conjecture.  
Does the Bateman--Horn conjecture ``detect this'' attempted fraud, 
or does it just plow ahead and suggest to the unwary that there are infinitely many primes of this form?  
For the sake of curiosity, let us try it and see what happens.

If $f(n) = n^2-1$, then $f(n) \equiv 0 \pmod{p}$ becomes $n^2 \equiv 1 \pmod{p}$ and hence
\begin{equation*}
\omega_f(p) = 
\begin{cases}
1 & \text{if $p=2$},\\
2 & \text{otherwise}.
\end{cases}
\end{equation*}
Thus,
\begin{equation}\label{eq:Cfp1}
C(f) = \prod_{p \geq 3} \frac{p-2}{p-1} = \prod_{p \geq 3} \left(1 - \frac{1}{p-1}\right).
\end{equation}
Let $P_n$ denote the set of the first $n$ odd primes.
For example, $P_1 = \{3\}$, $P_2 = \{3,5\}$, $P_3 = \{3,5,7\}$, and so forth.
Numerical evidence (Table \ref{Table:DivergeZero}) suggests that 
\begin{equation}\label{eq:PrimeEuler}
\lim_{n\to\infty} \prod_{p \in P_n} \left(1 - \frac{1}{p-1}\right) \ = \ 0;
\end{equation}
that is, the product that defines $C(f)$ diverges to zero (this is the case).
If this application of the Bateman--Horn conjecture were admissible (it is not since $f$ is reducible), 
we would expect no primes of the form $n^2-1$.
This is not too far from the truth:  we were off by only one.
The Bateman--Horn conjecture is surprisingly robust; in some sense, it detected
our trickery and rejected it.

\begin{table}
\begin{equation*}
\begin{array}{c|c}
n & \prod_{p \in P_n} \big(1 - \frac{1}{p-1}\big) \\
\hline
10 & 0.210114 \\
100 & 0.117208 \\
1{,}000 & 0.0824772 \\
10{,}000 & 0.0641136 \\
100{,}000 & 0.0526554 \\
1{,}000{,}000 & 0.044777\\
10{,}000{,}000 & 0.0390052\\
\end{array}
\end{equation*}
\caption{The partial products $\prod_{p \in P_n} \left(1 - \frac{1}{p-1}\right)$ appear to
diverge to zero.}
\label{Table:DivergeZero}
\end{table}

Why does \eqref{eq:Cfp1} diverge to zero? 
Euler's result (Theorem \ref{Theorem:Euler}) ensures that
\begin{equation*}
\sum_{p \in P_n} \frac{1}{p-1} > \sum_{p \in P_n} \frac{1}{p},
\end{equation*}
which diverges as $n\to\infty$.  An application of Lemma \ref{Lemma:Trick}
implies that \eqref{eq:Cfp1} diverges (to zero); that is, $C(f) = 0$.
Thus, the Bateman--Horn conjecture detected, in a subtle way, the difference between
the polynomials $n^2+1$ (which is believed to generate infinitely many prime values)
and $n^2-1$, which is prime exactly once.

\subsection{Prime-generating polynomials}
Euler observed in 1772 that the polynomial $f(t) = t^2+t+41$ assumes prime values for $t=0,1,\ldots,39$.
However, $f(40) = 1681 = 41^2$
is composite.
Is there a nonconstant polynomial that assumes only prime values?

\begin{theorem}
Let $f \in \Z[x]$.  If $f(n)$ is prime for all $n \geq 0$, then $f$ is constant.
\end{theorem}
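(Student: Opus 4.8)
The plan is to exploit the fundamental congruence property of integer polynomials: for $f \in \mathbb{Z}[x]$ and integers $a,b,m$ with $a \equiv b \pmod{m}$, one has $f(a) \equiv f(b) \pmod{m}$. This is because every term of $f(a)-f(b)$ is divisible by $a-b$, so $m \mid (a-b)$ forces $m \mid (f(a)-f(b))$. The idea is to apply this with a well-chosen modulus to trap the values of $f$ and force them to repeat.

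First I would set $p = f(0)$, which is a prime by hypothesis. For each $k \geq 0$ the congruence $kp \equiv 0 \pmod{p}$ yields $f(kp) \equiv f(0) = p \equiv 0 \pmod{p}$, so $p \mid f(kp)$. But $f(kp)$ is itself a prime, hence a positive integer at least $2$, and its only positive divisors are $1$ and itself; being divisible by $p$ therefore forces $f(kp) = p$. Thus $f(kp) = p$ for every integer $k \geq 0$.

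Finally I would observe that the polynomial $f(x) - p$ vanishes at the infinitely many distinct points $x = 0, p, 2p, 3p, \ldots$. Since a nonzero polynomial has only finitely many roots, $f(x) - p$ must be identically zero, so $f$ is the constant polynomial $p$. There is essentially no serious obstacle in this argument; the only point requiring a little care is the step where divisibility by $p$ together with primality forces $f(kp) = p$ rather than some larger multiple of $p$, which is exactly where the hypothesis that each $f(n)$ is \emph{prime} (and not merely a positive integer) gets used.
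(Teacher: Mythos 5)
Your argument is correct and is essentially identical to the paper's proof: both set $p=f(0)$, use the congruence $f(kp)\equiv f(0)\pmod{p}$ to conclude $f(kp)=p$ by primality, and then note that $f(x)-p$ has infinitely many roots. The only difference is that you spell out the divisibility step $m\mid(a-b)\Rightarrow m\mid(f(a)-f(b))$, which the paper leaves implicit.
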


\begin{proof}
Let $p = f(0)$, which is prime by assumption.  
For each $n \geq 0$, the prime $f(pn)$ is divisible by $p$.
Then $f(pn) = p$ for $n\geq 0$ and hence $f(pn) - p$
has infinitely many roots and is therefore zero.  Thus, $f$ is the constant polynomial $p$. 
\end{proof}

This shows that no single-variable polynomial can assume only prime values 
for all natural arguments.  Surprisingly, there is a polynomial of degree twenty-five in twenty-six variables whose positive integral range is precisely the set of prime numbers \cite{Jones}.  This startling fact is related to Matiyasevich's solution to Hilbert's tenth problem
\cite{Matiyasevich} and the work of Davis--Putnam--Robinson \cite{Davis}. It is not known what is the smallest number of variables a prime-generating polynomial must have, but it is definitely less than twenty-six: a polynomial with this property in twelve variables is also known; see \cite[Sect.~2.1]{luca}.

What is so special about $41$?
Suppose that $f(t) = t^2+t+k$ generates primes for the first
few nonnegative integral values of $t$.  Then $k = f(0)$ is prime.
In 1913, Georg Yuri Rainich (1886--1968) proved if $p$ is prime, then
$n^2 + n + p$ is prime for $n=0,1,\ldots,p-2$ if and only if the imaginary quadratic field 
$\Q(\sqrt{1 - 4p})$ has class number one \cite{Rabinowitsch}\footnote{Rainich published 
\cite{Rabinowitsch} under his original birth name, Rabinowitsch.  According to \cite{Palka},
``Rainich was giving a lecture in which he made use of a clever trick which he had discovered. Someone in the audience indignantly interrupted him pointing out that this was the famous Rabinowitsch trick and berating Rainich for claiming to have discovered it. Without a word Rainich turned to the blackboard, picked up the chalk, and wrote `RABINOWITSCH.'
He then put down the chalk, picked up an eraser and began erasing letters. When he was done what remained was `RA IN  I CH.'
He then went on with his lecture.''}; for our purposes it suffices to say that this means that $\Q(\sqrt{1 - 4p})$ is a unique
factorization domain.
The Baker--Heegner--Stark theorem ensures that there are only finitely
many primes $p$ with this property \cite{Baker, Heegner, Stark, StarkGap}.
The largest of these, $p = 41$, corresponds to the quadratic field $\Q(\sqrt{-163})$.
Thus, we cannot beat Euler at his own game.

Perhaps we can beat Euler on average.  Can we find an Euler-type polynomial
that produces an asymptotically greater number of primes than Euler's polynomial?
Let us first examine what the Bateman--Horn conjecture says about $f(t) = t^2 + t + 41$.

Since $f(t)$ is identically $1$ modulo $2$, $\omega_f(2) = 0$.
In what follows we use the ``completing the square'' identity
\begin{equation}\label{eq:CompleteSquare}
4a(at^2+ bt + c) = (2at+b)^2 -(b^2 - 4ac).
\end{equation}
For $p \geq 3$, this ensures that
\begin{equation*}
t^2+t+41 \equiv 0 \pmod{p}
\quad\iff\quad
(2t+1)^2 \equiv -163 \pmod{p}.
\end{equation*}
Thus, everything boils down to whether $-163$ is a quadratic residue or nonresidue modulo the odd prime $p$:
\begin{equation*}
\omega_f(p)
= 1 + \left(\frac{-163}{p} \right).
\end{equation*}
Here $(\frac{-163}{p})$ is a \emph{Legendre symbol}, defined by
\begin{equation*}
\left(\frac{\ell}{p}\right)
= 
\begin{cases}
0 & \text{if $p|\ell$},\\
1 & \text{if $\ell$ is a quadratic residue modulo $p$},\\
-1 & \text{if $\ell$ is a quadratic nonresidue modulo $p$}.
\end{cases}
\end{equation*}
Numerical computation confirms that $-163$ is a quadratic nonresidue modulo 
\begin{equation}\label{eq:Prime37}
3, 5, 7, 11, 13, 17, 19, 23, 29, 31, 37,
\end{equation}
the first eleven odd primes.  Thus, $\omega_f(p) = 0$ for these primes and hence
\begin{align}
C(t^2+t+ 41)
&= \prod_p \left(1 - \frac{1}{p} \right)^{-1} \left(1 - \frac{\omega_f(p)}{p}\right) \nonumber \\
&= 2 \prod_{3 \leq p \leq 37} \frac{p}{p-1} \prod_{p \geq 41} \left(     1 + \frac{1-\omega_f(p)}{p-1} \right) \label{eq:OhNo}\\
&\approx 2 \cdot 3.31993 = 6.63985. \label{eq:EulerConstant}
\end{align}
The factors corresponding to $p=2,3,\ldots,37$ are each greater than one,
which drives $C(f)$ up.  We have little control over the second product, although we may
hope that $1-\omega_f(p)$ changes sign regularly enough to keep it in check.
Although it is not clear at first glance that the second product in \eqref{eq:OhNo}
converges, the product that defines the Bateman--Horn constant
is guaranteed to converge (see Section \ref{Section:Converge}) and thus the second product must as well.

The Bateman--Horn conjecture suggests that
\begin{equation}\label{eq:EulerLi}
Q(t^2+t+41;x)
\sim (3.31993\ldots) \Li(x).
\end{equation}
Can we find a second-degree polynomial $f(t)$ for which $Q(f;x)$ exceeds this amount
asymptotically?  
To this end, we want each factor in the product \eqref{eq:C} to be as large as possible.
Unfortunately, we cannot arrange for $\omega_f(p) = 0$ for all primes $p$ since the corresponding infinite product 
\begin{equation*}
\prod_p \left(1 - \frac{1}{p}\right)^{-1} = \prod_p \frac{p}{p-1} = \prod_p \left(1 + \frac{1}{p-1}\right)
\end{equation*}
would diverge by Lemma \ref{Lemma:Trick} and Theorem \ref{Theorem:Euler}.  However, this would contradict
Theorem \ref{Theorem:ProductConverge}.

In fairness to Euler, we should try to beat him with a polynomial of the same type.
Thus, we search for an integer $k$ such that the polynomial $f(t) = t^2+t+k$
satisfies $\omega_f(p) = 0$ for the first several dozen or so primes.
We first need $k \equiv 1 \pmod{2}$ such that $\omega_f(2) = 0$.
The identity \eqref{eq:CompleteSquare} shows that for odd $p$,
\begin{equation*}
f(t) \equiv 0 \pmod{p}
\quad\iff\quad
(2t+1)^2 \equiv 1-4k \pmod{p}.
\end{equation*}
Consequently, we need to choose an odd $k$ such that
$1-4k$ is a quadratic nonresidue modulo $p$ for a long initial string of odd primes.

Let $P_n$ denote the set of odd primes at most $n$.
For each $p \in P_n$, let $r_p$ be a quadratic nonresidue modulo $p$.
The Chinese Remainder Theorem provides an odd $k_n$, unique modulo $2\prod_{p\in P_n}p$,
such that $k_n \equiv 4^{-1}(1-r_p) \pmod{p}$ for each $p \in P_n$.
Then $1-4k_n \equiv r_p \pmod{p}$ is a quadratic nonresidue and hence $\omega_p(f) = 0$
for each $p \in P_n$.
The corresponding Bateman--Horn constant is
\begin{equation*}
C(t^2+t+k_n)
= 2 \prod_{3 \leq p \leq n} \frac{p}{p-1} \prod_{p > n} \frac{p - \omega_f(p)}{p-1}.
\end{equation*}
If $n = 547$, the hundredth odd prime, and we let $r_p$ equal the least primitive root of $p$, the corresponding constant
\begin{equation*}
C(t^2+t+k_{100}) 
\approx 2 \cdot (5.4972\ldots) = 10.9945
\end{equation*}
easily beats the constant \eqref{eq:EulerConstant} corresponding to Euler's polynomial.  Unfortunately, $k_{100}$
is not as easily remembered as Euler's $41$:
\begin{quote}
3682528442873462645493394982418837604455310384084190749577\\
5453041420103519734083583186615204669729662489042369819157\\
7358565650719425670030967384568941667322171286195075149379\\
680113340447535104953498545635385597443028681.
\end{quote}
It is conceivable that other choices of $r_p$ might lead to a smaller constant, although we have not looked into the matter.\footnote{If we choose the least primitive roots
$2, 2, 3, 2, 2, 3, 2, 5, 2, 3, 2$ of the primes \eqref{eq:Prime37}, respectively, and apply the algorithm
above we obtain
$k_{37} = 1{,}448{,}243{,}016{,}041$.}
The Bateman--Horn conjecture suggests that
\begin{equation*}
Q(t^2+t+k_{100};x) \sim (5.4972\ldots) \Li(x),
\end{equation*}
which is asymptotically larger than the corresponding prediction \eqref{eq:EulerLi} for Euler's polynomial.

Before we pat ourselves on the back for beating Euler, we should point out that 
the search for prime-producing polynomials using these sorts of arguments
has a long history \cite{Jacobson, Beeger, Fung}.  Moreover, without the Bateman--Horn conjecture
or one of its weaker relatives (Section \ref{Section:Prehistory}), we do not even know if any
quadratic polynomial produces infinitely many primes.  Thus, this is all speculative.

\subsection{A conjecture of Hardy and Littlewood}
A general conjecture about the asymptotic distribution of prime values assumed by quadratic polynomials is due to 
G.H.~Hardy (1877--1947) and John E. Littlewood (1885--1977) \cite[p.~48]{Hardy} (see also \cite[p.~19]{HardyWright}).
The more convenient formulation below is from \cite[p.~499]{Jacobson}.

\medskip\noindent\textbf{Hardy--Littlewood Conjecture (F).}
\emph{If $a, b, c$ are relatively prime integers, $a$ is positive, $a+b$ and $c$ are not both even, 
and $b^2-4ac$ is not a perfect square, then there are infinitely many primes of the form 
$f(t) = at^2+bt+c$.
The number of such primes at most $x$ is asymptotic to
\begin{equation}\label{eq:HLF}
\epsilon \prod_{\substack{p \geq 3 \\ p|\gcd(a,b)}} \frac{p}{p-1} \prod_{\substack{p \geq 3 \\ p \nmid a}} \left(1 - \frac{(\Delta/p)}{p-1} \right) \Li(x),
\end{equation}
in which
\begin{equation*}
\epsilon = 
\begin{cases}
\frac{1}{2} & \text{if $2\nmid(a+b)$},\\
1 & \text{otherwise}.
\end{cases}
\end{equation*}
}
\medskip

This is a consequence of the Bateman--Horn conjecture.  Let us see why, paying careful attention to the relevance
of Hardy and Littlewood's hypotheses.  
Suppose that $f(t) = at^2+bt+c$, in which $a>0$. 
What conditions on $a,b,c$ are necessary for $f$ to be prime infinitely often?
Since
\begin{equation*}
at^2+bt+c
\equiv 
\begin{cases}
c & \text{if $t \equiv 0 \pmod{2}$},\\
a+b+c & \text{if $t \equiv 1 \pmod{2}$},
\end{cases}
\end{equation*}
we want either $a+b$ or $c$ (or both) to be odd.
Consequently, 
\begin{equation}\label{eq:HardyOmega2}
\omega_f(2) = 
\begin{cases}
0 & \text{if $a+b$ is even and $c$ is odd},\\
1 & \text{if $a+b$ is odd and $c$ is odd},\\
1 & \text{if $a+b$ is odd and $c$ is even}.
\end{cases}
\end{equation}
Suppose that $p$ is an odd prime.  There are two cases.
\begin{itemize}
\item If $p|a$, then $f(t) \equiv bt + c \pmod{p}$.  Since $\gcd(a,b,c) = 1$, we conclude that
\begin{equation*}
\omega_f(p) = 
\begin{cases}
0 & \text{if $p|b$},\\
1 & \text{if $p \nmid b$}.
\end{cases}
\end{equation*}

\item If $p\nmid a$, then \eqref{eq:CompleteSquare} ensures that
\begin{equation*}
f(t) \equiv 0 \pmod{p}
\qquad \iff\ \qquad
(2at+b)^2 \equiv \Delta \pmod{p},
\end{equation*}
in which $\Delta= b^2-4ac$ is the discriminant of $f$.  Thus,
\begin{equation*}
\omega_f(p) = 1+\left(\frac{\Delta}{p} \right).
\end{equation*}
\end{itemize}
Thus, the Bateman--Horn constant \eqref{eq:C} is 
\begin{align}
C(f)
&= \big(2-\omega_f(2) \big) \prod_{\substack{p\geq 3,\,p|a\\p|b}}  \frac{p-0}{p-1}
 \prod_{\substack{p\geq 3,\,p|a\\p\nmid b}}   \frac{p-1}{p-1} 
 \prod_{\substack{p\geq 3\\p \nmid a}} \frac{p-(1+(\Delta/p))}{p-1} \nonumber \\
&= 2 \epsilon  \prod_{\substack{p\geq 3\\p|\gcd(a,b)}}  \frac{p}{p-1}
\prod_{\substack{p\geq 3\\p \nmid a}}  \left(1 - \frac{(\Delta/p)}{p-1} \right). \label{eq:HLC}
\end{align}

There is a subtle point here that we wish to highlight.
If $\Delta$ is a perfect square, then $(\Delta/p) = 1$ and the second factor in \eqref{eq:HLC} diverges (to zero) by
Lemma \ref{Lemma:Trick} and Theorem \ref{Theorem:Euler}.  
This does not contradict the Bateman--Horn conjecture, since $f$ is not irreducible in this case.
If $\Delta$ is a perfect square, then the two roots
\begin{equation*}
\frac{-b + \sqrt{\Delta}}{2a}
\qquad\text{and}\qquad
\frac{-b - \sqrt{\Delta}}{2a}
\end{equation*}
of $f$ belong to $\Q$.  Then $f$ would be reducible over $\Q$ and hence, by Gauss' lemma \cite[Prop.~5, p.~303]{DF}, reducible over $\Z$.
Thus, $\Delta$ cannot be a perfect square if $f$ is to be prime infinitely often:  this is why
Hardy and Littlewood assume that $b^2-4ac$ is not a perfect square.
If $\Delta$ is not a perfect square, then the prediction \eqref{eq:Q} of the Bateman--Horn conjecture
provides the asymptotic formula \eqref{eq:HLF} proposed by Hardy and Littlewood.

\subsection{Ulam's spiral}
\begin{figure}
    \centering
        \includegraphics[width=0.5\textwidth]{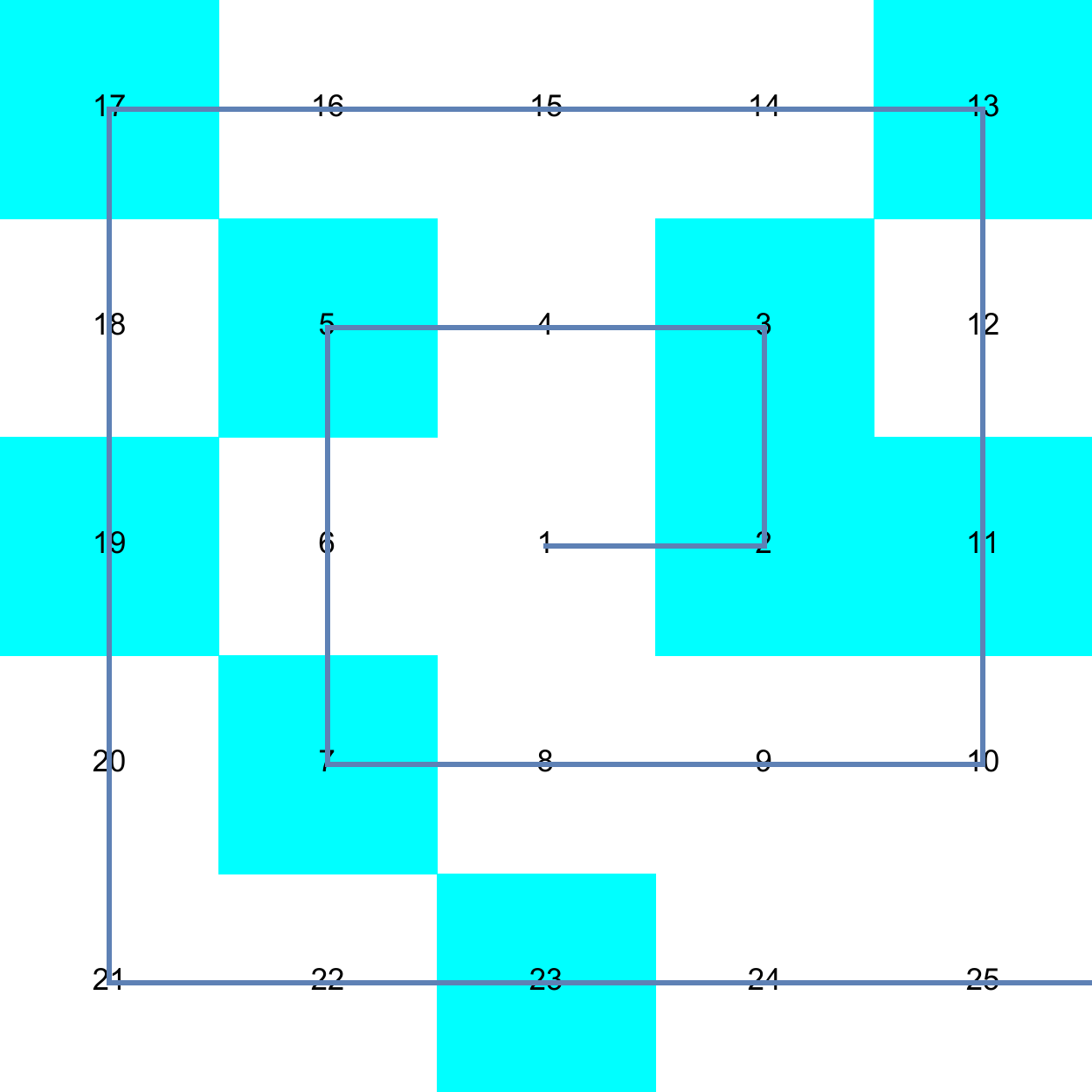}
    \caption{The natural numbers spiral outward counterclockwise from the origin.  A colored box
    is placed over each prime.}
    \label{Figure:Ulam1}
\end{figure}

In 1963, Stanis\l aw Ulam (1909--1984) discovered a startling pattern in the primes, allegedly while doodling at a scientific meeting; see Figure \ref{Figure:Ulam1}.
The story was popularized by Martin Gardner (1914--2010) in his much-loved \emph{Scientific American} column ``Mathematical Games'' \cite{Gardner}:
\begin{quote}\small
Last fall Stanislaw M. Ulam of the Los Alamos Scientific Laboratory, attended a scientific meeting at which he found himself listening to what he describes as a ``long and very boring paper.'' To pass the time he doodled a grid of horizontal and vertical lines on a sheet of paper. 
His first impulse was to compose some chess problems, then he changed his mind and began to number the intersections, starting near the center with $1$ 
and moving out in a counterclockwise spiral. With no special end in view, he began circling all the prime numbers. 
To his surprise the primes seemed to have an uncanny tendency to crowd into straight lines.
\end{quote}

The patterns observed by Ulam are evident in Figure \ref{Figure:Ulam2}.  There are certain diagonals that the primes
prefer and others that they eschew.  Less prominent, but still noticeable, are the scarcity or abundance of primes
on some horizontal or vertical lines.  Others seem to have more than their fair share of primes.  
The primes, which are often assumed to be ``random'' in their overall distribution (Section \ref{Section:PNT}), 
manage to conspire over great distances to form these intriguing patterns.  What is the explanation for this
behavior?

\begin{figure}
    \centering
    \begin{subfigure}[t]{0.475\textwidth}
        \centering
        \includegraphics[width=\textwidth]{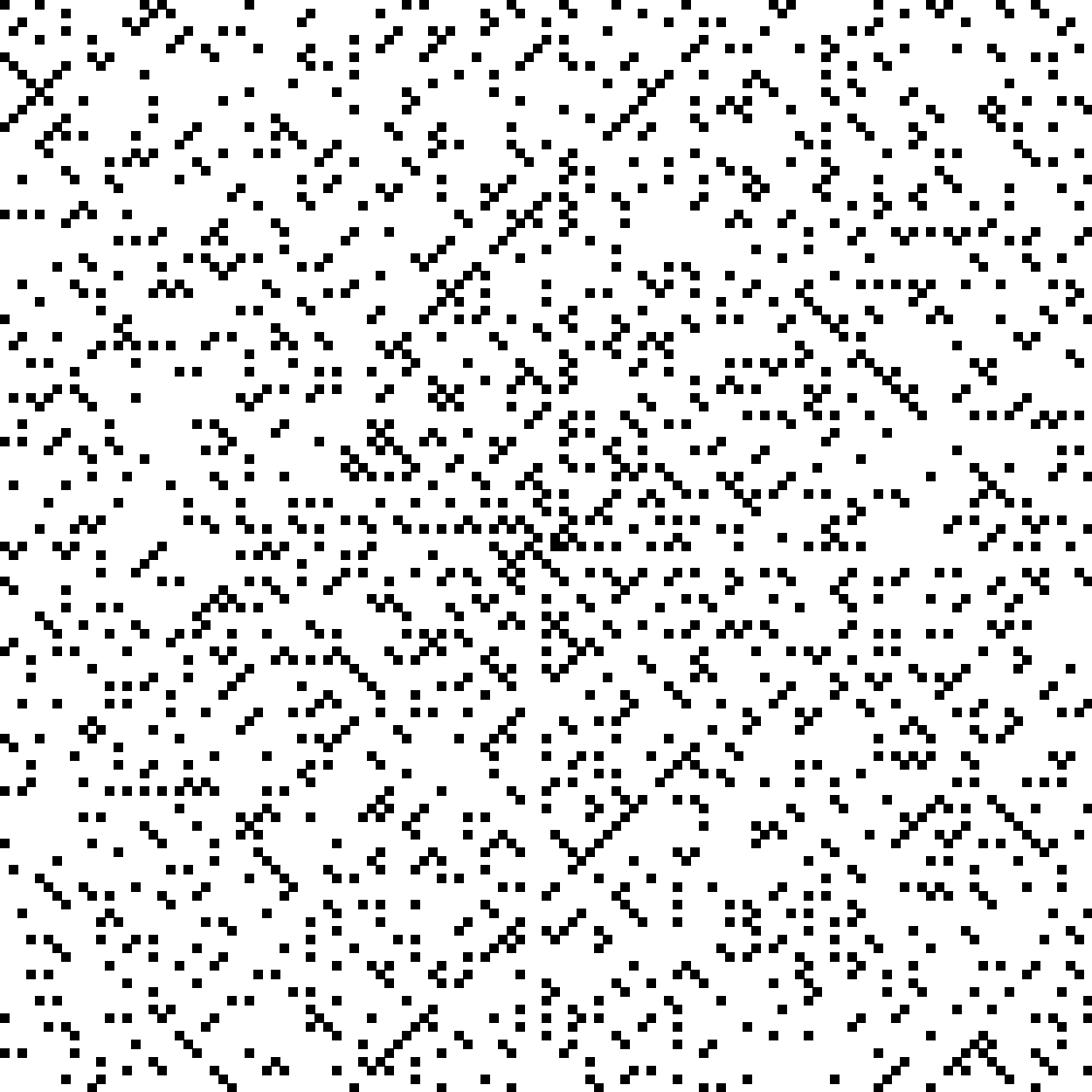}
        \caption{$125 \times 125$}
    \end{subfigure}
    \quad
    \begin{subfigure}[t]{0.475\textwidth}
        \centering
        \includegraphics[width=\textwidth]{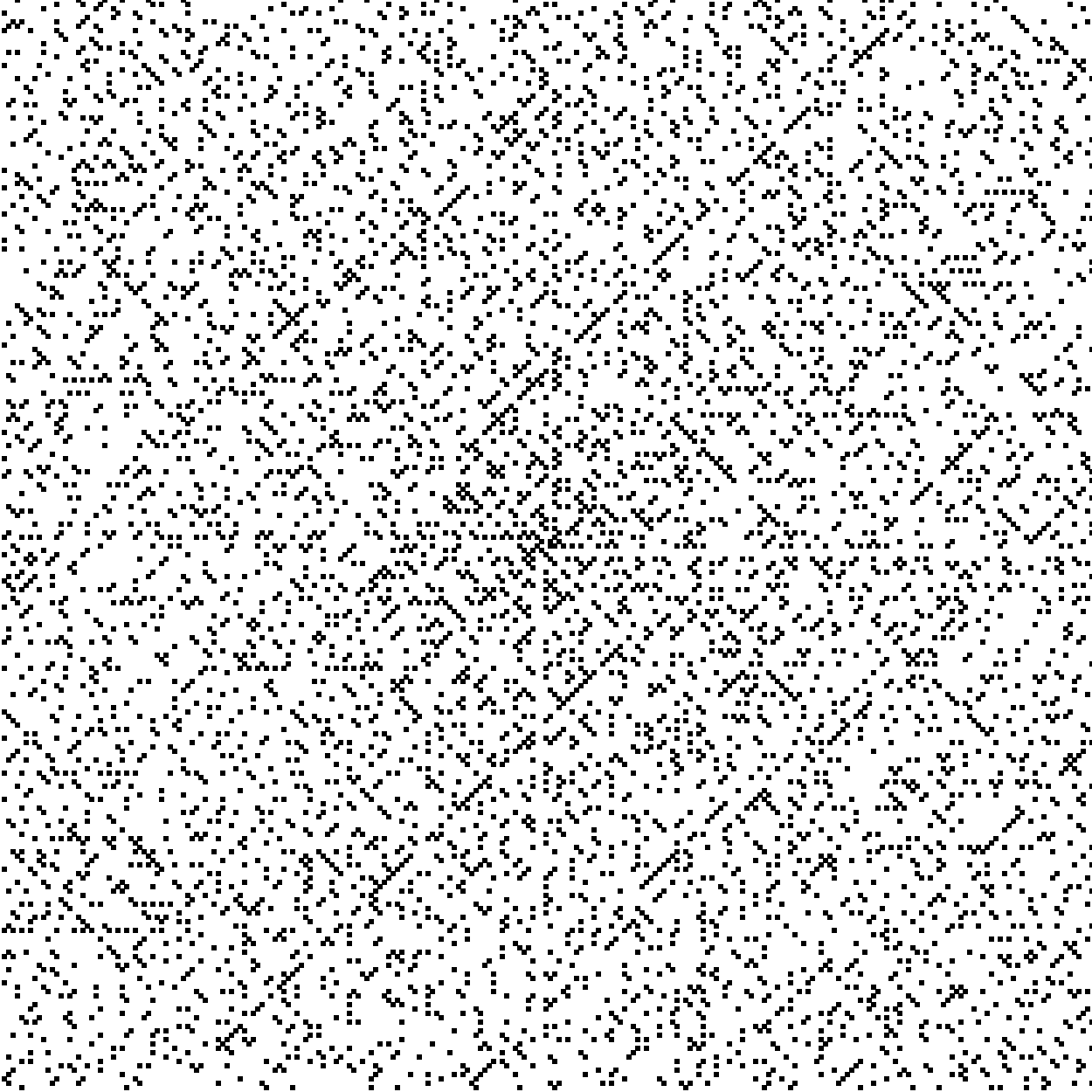}
        \caption{$250 \times 250$}
    \end{subfigure}
    \caption{Plots of the Ulam spiral on grids of several sizes.   
There are certain diagonals that the primes (black)
prefer and others that they eschew.  Less prominent, but still noticeable, are the scarcity or abundance of primes
on some horizontal or vertical lines.  The existence of these patterns is a consequence of the Bateman--Horn conjecture.}
    \label{Figure:Ulam2}
\end{figure}

\begin{figure}
    \centering
        \includegraphics[width=\textwidth]{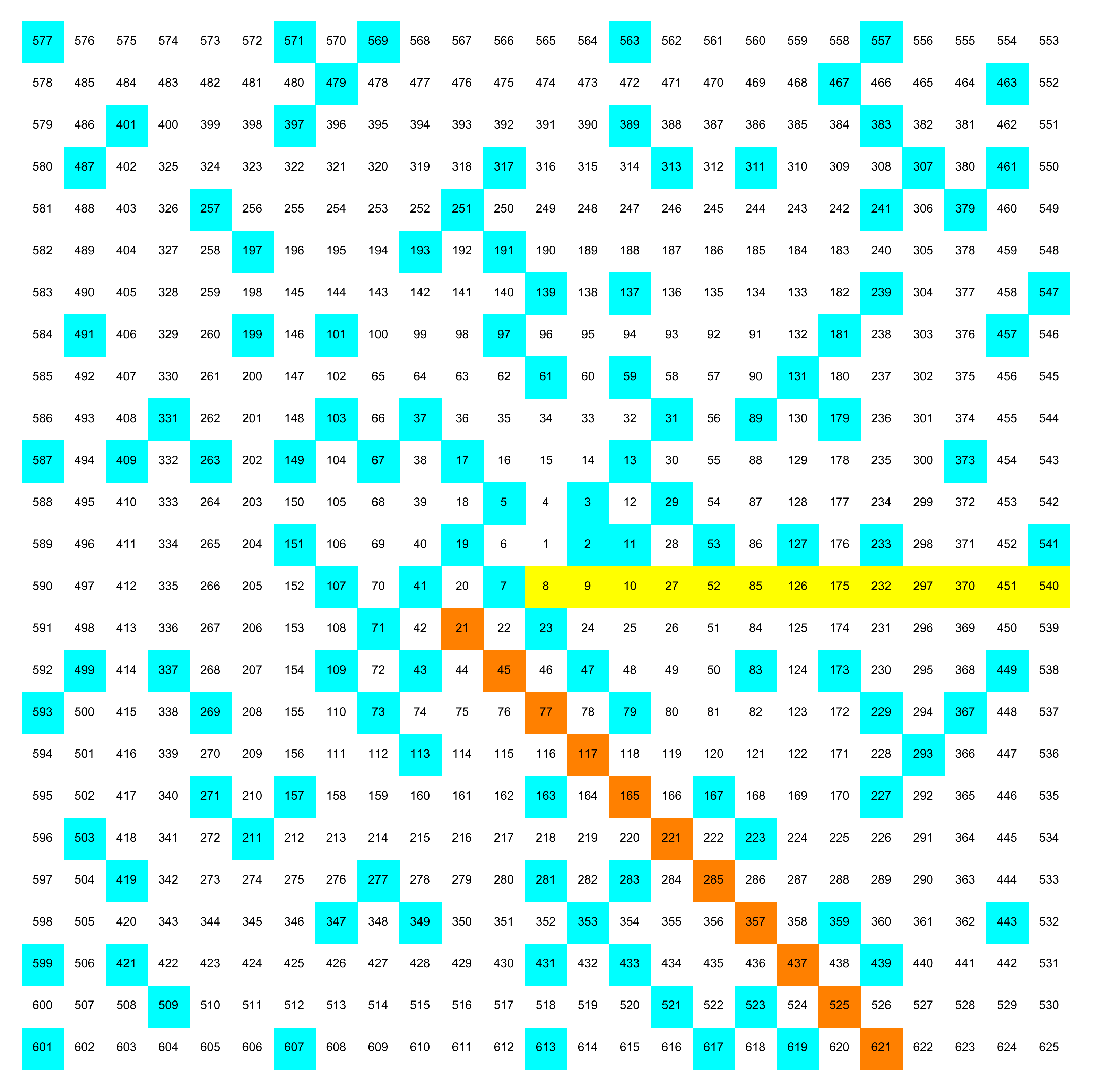}
    \caption{The horizontal ray depicted in yellow is prime free.
    If we ignore the initial $8$ and $9$, and start with $f(1) = 10$, then the $n$th element on this list is $f(n) = (4n+1)(n+1)$, which is 
    composite (see Example \ref{Example:HorizontalPrimeFree}).  
    Similarly, the diagonal ray depicted in orange is prime free.
    The $n$th number on this ray is $4n^2 + 12n + 5 = (2n+1)(2n+5)$   
(see Example \ref{Example:DiagonalPrimeFree}).}
    \label{Figure:Horizontal}
\end{figure}

In what follows, it is more fruitful to consider ``rays'' in the Ulam spiral instead of ``lines.''
This is no loss of generality since each line is the union of two rays.  

\begin{example}\label{Example:HorizontalPrimeFree}
Consider Figure \ref{Figure:Horizontal}, in which the horizontal ray 
\begin{equation}\label{eq:HorrayBad}
8,9,10,27,52,85,126,175,232,297,370,451,540,\ldots
\end{equation}
in the Ulam spiral appears devoid of primes.  Why does this occur?
Let us truncate our sequence slightly to avoid the short stretch of consecutive
integers at the beginning.  This yields the sequence
\begin{equation}\label{eq:UlamHor}
10,27,52,85,126,175,232,297,370,451,540,\ldots.
\end{equation}
To pass from $10$ to $27$, we walk around the exterior of the $3 \times 3$ square
\begin{equation*}
\begin{array}{ccc}
5 & 4 & 3 \\
6 & 1 & 2 \\
7 & 8 & 9
\end{array}
\end{equation*}
and take one more step; this requires $4 \cdot 4 + 1 = 17$ total steps.  Similarly, to pass from $27$ to $52$
we must traverse the exterior of a $5 \times 5$ square and take an additional step; this requires
$4 \times 6 + 1 = 25$ total steps.  Let $f(n)$ denote the $n$th number on the list \eqref{eq:UlamHor}.  Then
induction confirms that
\begin{equation*}
f(n) - f(n-1)  \ = \ 8n+1,
\end{equation*}
and hence
\begin{align}
f(n) 
&= \sum_{i=2}^n \big( f(i) - f(i-1) \big) + f(1)\nonumber \\
&= 10 + \sum_{i=2}^n (8i+1)\nonumber \\
&= 10 + (n-1) + 8\sum_{i=2}^n i \nonumber\\
&= n+9 + 8 \left( \frac{n(n+1)}{2} - 1\right) \nonumber\\
&= 4n^2 + 5n + 1 \nonumber \\
&= (4n+1)(n+1).  \label{eq:UlamBad}
\end{align}
This ensures that none of the numbers on the horizontal ray \eqref{eq:HorrayBad} is prime.
\end{example}

\begin{example}\label{Example:DiagonalPrimeFree}
The diagonal ray $21,45,77,117,165,221,285,357,437,525,621,\ldots$ in Figure \ref{Figure:Horizontal}
is similarly devoid of primes.  An argument similar to that used in Example 
\ref{Example:HorizontalPrimeFree} confirms that the $n$th number on this list is
$4n^2 + 12n + 5 = (2n+1)(2n+5)$.
\end{example}

The prime-free rays of Examples \ref{Example:HorizontalPrimeFree} 
and \ref{Example:DiagonalPrimeFree} (see Figure \ref{Figure:Horizontal})
are governed by a reducible quadratic polynomial.  What about prime-rich rays?

\begin{figure}
    \centering
    \begin{subfigure}[t]{0.475\textwidth}
        \centering
        \includegraphics[width=\textwidth]{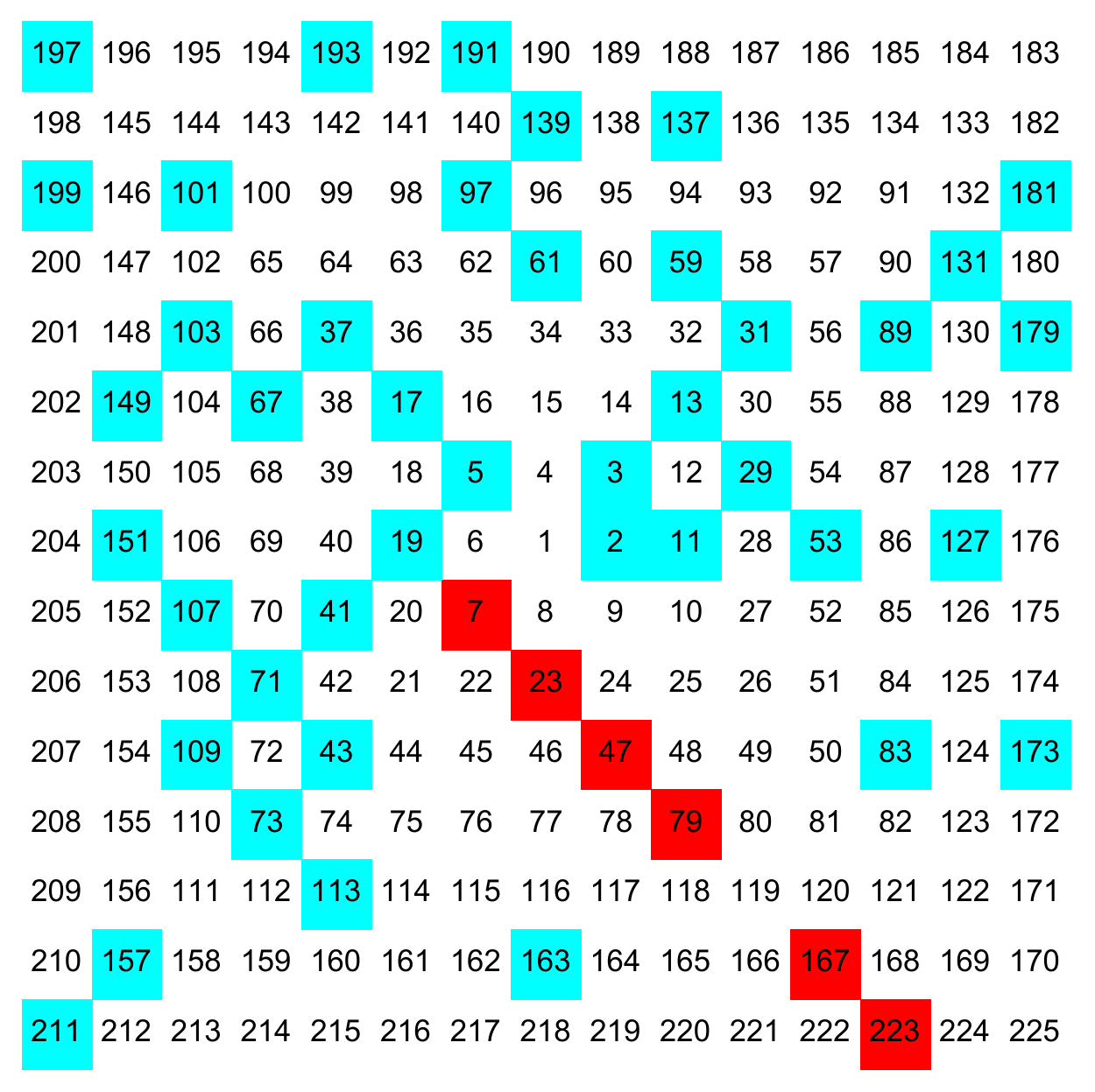}
    \caption{The diagonal ray $7,19,23,47,67,79\ldots$ contains an abundance of primes (red). 
    The $n$th number on the ray is $f(n) =4n^2+4n-1$.  The Bateman--Horn constant of this polynomial 
    is approximately $3.70$.}
    \end{subfigure}
    \quad
    \begin{subfigure}[t]{0.475\textwidth}
        \centering
        \includegraphics[width=\textwidth]{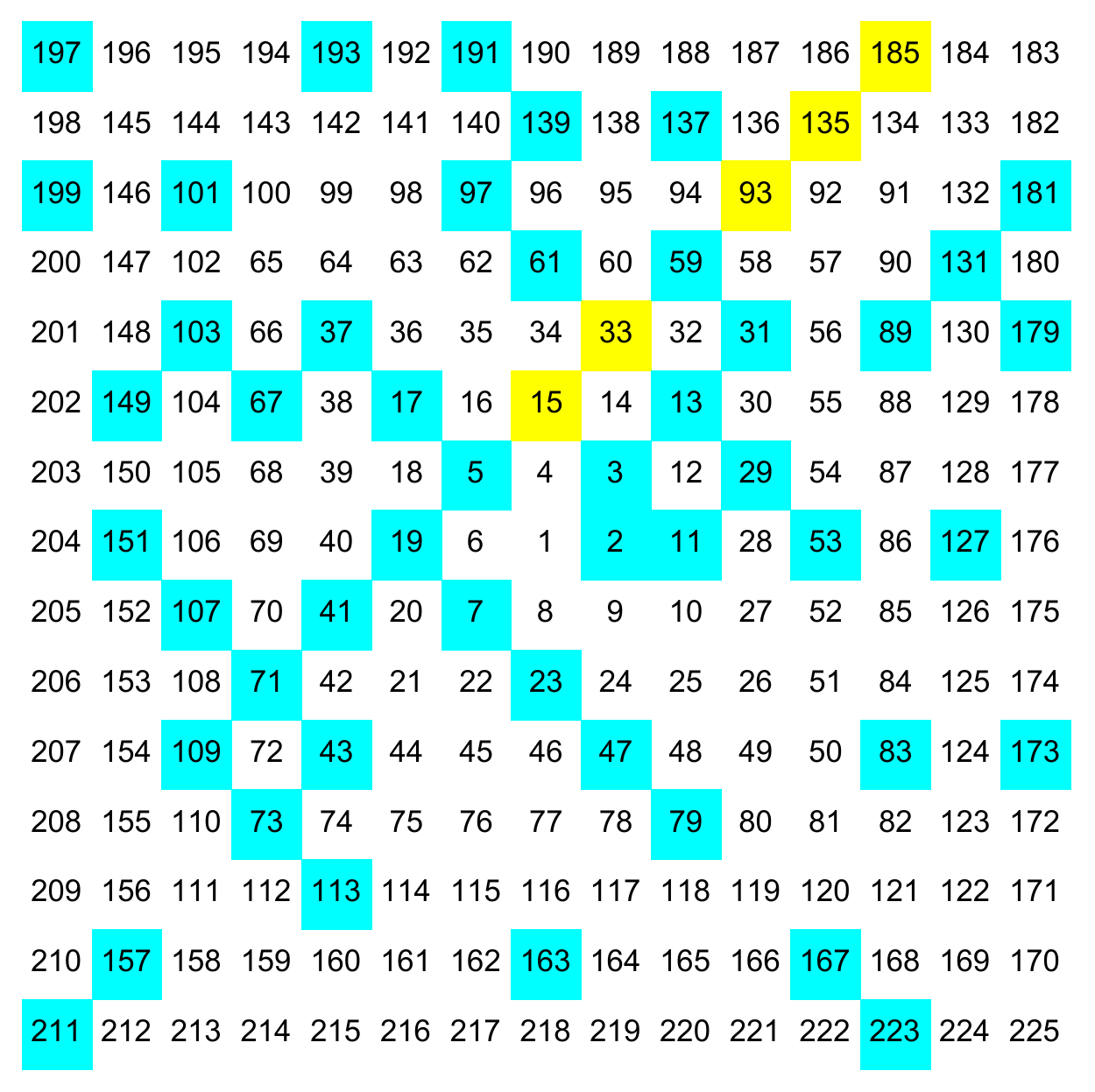}
    \caption{The diagonal ray $5,15,33,59,93,\ldots$ contains a few
    primes.  The $n$th number on the ray is
    $f(n) = 4n^2 - 2n+3$. The Bateman--Horn constant of this polynomial 
    is approximately $1.02$.}
    \end{subfigure}
    \caption{The relative number of primes on diagonal rays is governed by the Bateman--Horn conjecture.}
    \label{Figure:Diagonals}
\end{figure}

\begin{example}\label{Example:DiagonalBig}
Consider Figure \ref{Figure:Diagonals}a, in which the particularly prime-rich diagonal that includes 
the primes $7,19,23,47,67,79,103,167,199,223$ stands out.  As before, it is more convenient
to consider a single ray, in which the first differences increase monotonically.  We therefore study the ray
\begin{equation}\label{eq:NewList}
7,23,47,79,119,167,223,\ldots.
\end{equation}
Of these numbers only $119$ is composite.
If $f(n)$ denotes the $n$th number on the list \eqref{eq:NewList}, 
then an argument similar to that of the Example \ref{Example:HorizontalPrimeFree} shows that
\begin{equation*}
f(n) - f(n-1) = 8n
\end{equation*}
and hence
\begin{align}
f(n) &= \sum_{i=2}^n \big( f(i) - f(i-1) \big) + f(1) \nonumber \\
&=7+ \sum_{i=2}^n 8i \nonumber \\
&= 8 \left(\frac{n(n+1)}{2} - 1\right) + 7 \nonumber \\
&= 4n^2+4n-1.  \label{eq:4nn1}
\end{align}
Unlike \eqref{eq:UlamBad}, this polynomial is irreducible.  Since it has at most two roots modulo any prime and it does not
vanish identically modulo $2$, it does not vanish identically modulo any prime.  Consequently, the Bateman--Horn conjecture 
suggests that it assumes infinitely many prime values.  
Since the discriminant of the polynomial \eqref{eq:4nn1} is $32$, 
the general computation \eqref{eq:HLC} tells us that
\begin{equation*}
Q(f;x) \sim \frac{1}{2}C(f) \Li(x),
\end{equation*}
in which
\begin{equation*}
C(f) = 2 \prod_{p \geq 3} \left( 1 - \frac{(32/p)}{p-1} \right).
\end{equation*}
Among the odd primes at most $67$ we have
\begin{equation*}
\left( \frac{32}{p} \right)
=
\begin{cases}
1 & \text{if $p = 7,17,23,31,41,47$},\\
-1 & \text{if $p = 3,5,11,13,19,29,37,43,53,59,61,67$}.
\end{cases}
\end{equation*}
This substantial imbalance among the first few odd primes makes $C(f)$ unusually large and
explains the particularly prime-rich diagonal that corresponds to this polynomial.  In particular,
numerical computations suggest that $\frac{1}{2}C(f) \approx 3.70$.
\end{example}

\begin{example}
Consider the diagonal ray $5,15,33,59,93,135,185,\ldots$; see Figure \ref{Figure:Diagonals}b.
Although it contains some primes, it does not appear as prime rich as the ray from 
Example \ref{Example:DiagonalBig}.
Its values correspond to $f(t) = 4t^2 - 2t+3$, which has discriminant $-44$.  
Since $(-44/3) = (-44/5) = 1$, the primes $3$ and $5$ conspire to make $C(f)$
smaller; see \eqref{eq:HLC}.
The coefficient of $\Li(x)$ provided by the Bateman--Horn conjecture is 
approximately $1.02$.  This is substantially lower than in the previous example.
\end{example}

In summary, the patterns that Ulam observed can be explained as follows.
If we agree to omit the first several consecutive terms on a given ray, then 
there are integers $b$ and $c$ such that the $n$th number on the ray is
\begin{equation*}
f(n) = 4n^2 + bn + c.
\end{equation*}
If $b$ is even, then the ray is diagonal.
If $b$ is odd, then the ray is horizontal or vertical.
Certain combinations of $b$ and $c$ yield reducible polynomials; in these cases the ray contains
at most one prime.  Other combinations of $b$ and $c$ yield irreducible polynomials; 
the Bateman--Horn conjecture predicts the relative number of primes along each such ray.

\section{Multiple polynomials}\label{Section:Multiple}

We are now ready to apply the Bateman--Horn conjecture to families
of irreducible polynomials $f_1,f_2,\ldots,f_k \in \Z[x]$ with positive leading coefficients, no two of which are multiples of each other.  
Recall that the product $f = f_1 f_2 \cdots f_k$ 
should not vanish modulo any prime.  Then the conjecture predicts that the number $Q(f_1,f_2,\ldots,f_k;x)$
of $n \leq x$ for which $f_1(n), f_2(n),\ldots, f_k(n)$ are simultaneously prime
is asymptotic to
\begin{equation*}
\frac{C(f_1,f_2,\ldots,f_k) }{\prod_{i=1}^k \deg f_i} 
\int _{2}^{x}\frac{dt}{(\log t)^k},
\end{equation*}
in which 
\begin{equation*}
C(f_1,f_2,\ldots,f_k) \ = \ \prod_p \left( 1 - \frac{1}{p} \right)^{-k}
\left(1- \frac{\omega_f(p)}{p} \right).
\end{equation*}
In particular, observe that the number $k$ of polynomials involved appears in the exponents
that occur in the integrand and the product that defines the Bateman--Horn constant.

\subsection{Twin prime conjecture}\label{Section:Twin}
If $p$ and $p+2$ are prime, then $p$ and $p+2$ are \emph{twin primes}.  
The long-standing twin prime conjecture asserts that there are infinitely many twin primes.
Although this question likely puzzled thinkers since Euclid's time, the earliest extant record
of the conjecture (in a more general form, see Section \ref{Section:Cousin}) is from Alphonse de Polignac (1826--63) in 1849.
While it remains unproven, recent years have seen an explosion of closely-related work
\cite{Polymath, Zhang, Maynard}.

In 1919, Viggo Brun (1885-- 1978) proved that the sum
\begin{equation}\label{eq:BrunSum}
\left( \frac{1}{3} + \frac{1}{5}\right) + \left( \frac{1}{5} + \frac{1}{7} \right)
+ \left( \frac{1}{11} + \frac{1}{13} \right) + \left( \frac{1}{17} + \frac{1}{19} \right) + \cdots
\end{equation}
of the reciprocals of the twin primes converges.  This stands in stark contrast to 
Euler's discovery that $\sum_p 1/p$ diverges (Theorem \ref{Theorem:Euler}).  Thus,
the twin primes must be far sparser, in the sense of reciprocal sums, than the primes themselves.  
The sum \eqref{eq:BrunSum}, which is now known as \emph{Brun's constant}, 
is greater than $1.83$ and less than $2.347$ \cite{KlyveThesis}
(numerical evidence suggests a value
of approximately $1.9$).

\begin{figure}
    \centering
    \begin{subfigure}[t]{0.475\textwidth}
        \centering
        \includegraphics[width=\textwidth]{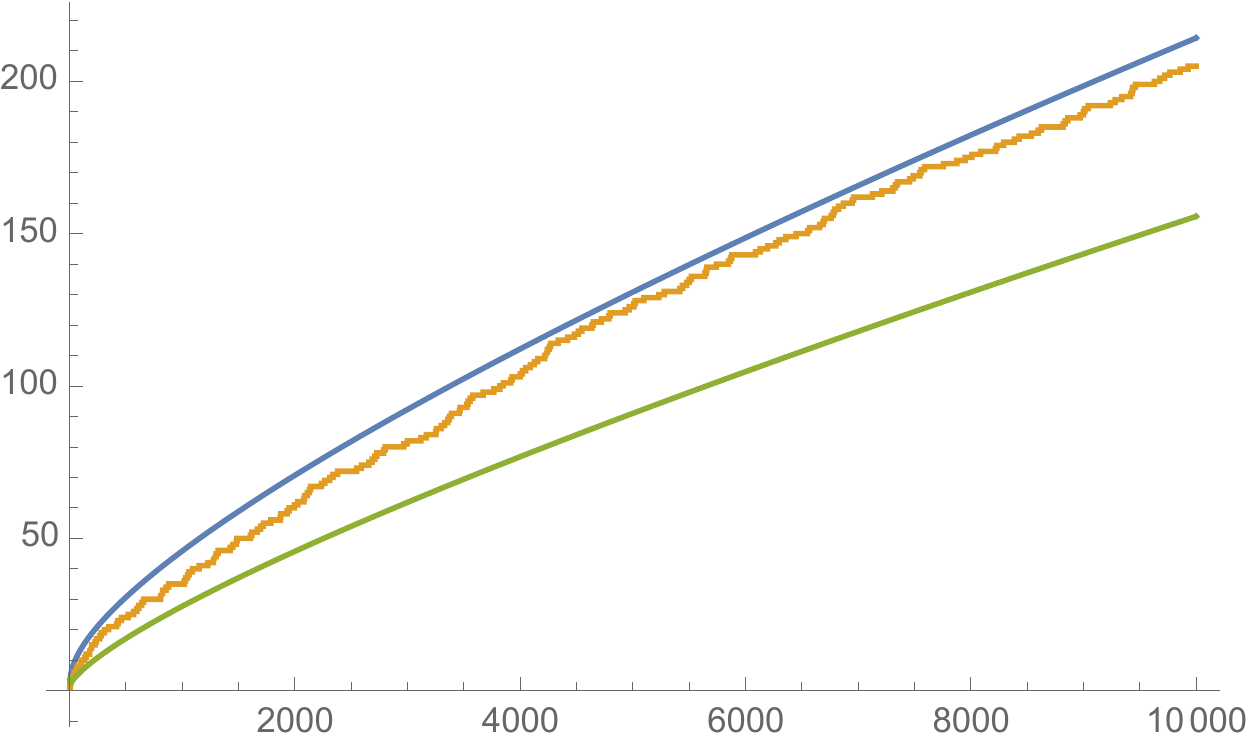}
    \caption{$x \leq 10{,}000$}
    \end{subfigure}
    \quad
    \begin{subfigure}[t]{0.475\textwidth}
        \centering
        \includegraphics[width=\textwidth]{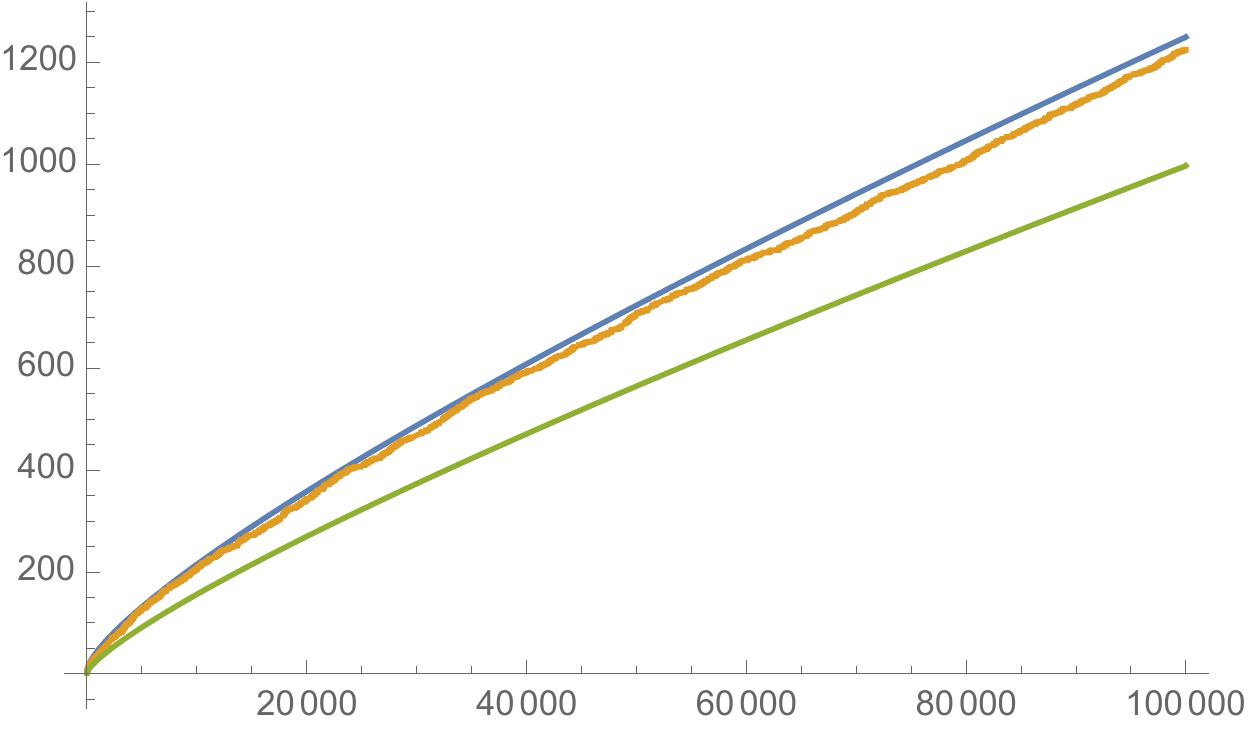}
    \caption{$x \leq 100{,}000$.}
    \end{subfigure}
    \caption{Graph of $\pi_2(x)$ (orange) versus $2C_2 \int_2^x (\log t)^{-2} \,dt$ (blue) and $2C_2 x / (\log x)^2$ (green).
    The more complicated integral expression apparently provides a much better approximation than does the simpler expression.
    }
    \label{Figure:TwinPrimes}
\end{figure}

What does the Bateman--Horn conjecture have to say about twin primes?
Let $f_1(t) = t$ and $f_2(t) = t+2$.  Then $f_1(t)$ and $f_2(t)$ are simultaneously prime
if and only if $t$ is the lesser element of a twin-prime pair.
Let $f = f_1 f_2$.
Then
\begin{equation*}
f(t) \equiv 0 \pmod{p}
\qquad \iff \qquad
t(t-2) \equiv 0 \pmod{p},
\end{equation*}
and hence
\begin{equation*}
\omega_f(p) = 
\begin{cases}
1 & \text{if $p=2$},\\
2 & \text{if $p \geq 3$}.
\end{cases}
\end{equation*}
The corresponding Bateman--Horn constant is
\begin{align*}
C(f_1,f_2)
&= \prod_p \left(1 - \frac{1}{p} \right)^{-2} \left(1 - \frac{\omega_f(p)}{p}\right) \\
&= 2\prod_{p\geq 3} \frac{p^2}{(p-1)^2} \cdot \frac{p-2}{p}\\
&= 2 \prod_{p \geq 3} \frac{p(p-2)}{(p-1)^2} \\
&= 2 C_2,
\end{align*}
in which
\begin{equation*}
C_2 = \prod_{p \geq 3} \frac{p(p-2)}{(p-1)^2} \approx  0.660161815
\end{equation*}
is the \emph{twin primes constant}.  
The Bateman--Horn conjecture predicts that
\begin{equation*}
Q(f_1,f_2;x)
\sim 2C_2 \int _{2}^{x}{dt \over (\log  t)^{2}}.
\end{equation*}
It is more traditional to express this in terms of the twin prime counting function.
Let $\pi_2(x)$ denote the number of primes $p$ at most $x$ for which $p+2$ is prime.
Then $\pi_2(x) = Q(f_1,f_2;x)$ and (by Lemma \ref{Lemma:Log})
\begin{equation*}
 \pi _{2}(x)\,\sim\, 2C_{2}\int _{2}^{x}{dt \over (\log  t)^{2}} 
 \,\sim\, \frac{2C_2x}{(\log x)^2};
\end{equation*}
see Figure \ref{Figure:TwinPrimes}.
This asymptotic estimate for $\pi_2(x)$ was first postulated by Hardy and Littlewood \cite{Hardy}.

\subsection{Cousin primes, sexy primes, and more}\label{Section:Cousin}

If $p$ and $p+4$ are prime, then $p$ and $p+4$ are \emph{cousin primes}.
If $p$ and $p+6$ are prime, then $p$ and $p+6$ are \emph{sexy primes}.
Thankfully the nomenclature appears to expire after this point, although it is still fruitful to consider
prime pairs $p,p+k$, in which $k\geq 2$ is even.  

Alphonse de Polignac conjectured in 1849 that for each even number $k$,
there are infinitely many prime pairs $p,p+k$.  This is now known as \emph{Polignac's conjecture}.
The case $k=2$ of Polignac's conjecture is the twin prime conjecture (Section \ref{Section:Twin}),
which remains unproven.
In light of the work of Yitang Zhang (1955--) \cite{Zhang} and the Polymath8b Project \cite{Polymath} on bounded gaps
between primes, we know that there is an even $k \leq 246$ for which 
infinitely many prime pairs $p,p+k$ exist.  Unfortunately, we do not 
know a specific value of $k$ for which this occurs.

The Bateman--Horn conjecture goes much further than even Polignac's conjecture.  It implies the existence of infinitely many pairs $p,p+k$ of primes
for each even $k$ and also supplies asymptotic predictions that are backed up by numerical computations.  
The following calculations were worked out in \cite{GLS}.
Let $f_1(t) = t$ and $f_2(t) = t+k$, and let $f = f_1f_2$.  Then
\begin{equation*}
f(t) \equiv 0 \pmod{p}
\qquad \iff \qquad
t(t+k) \equiv 0 \pmod{p},
\end{equation*}
and hence
\begin{equation*}
\omega_f(p) = 
\begin{cases}
1 & \text{if $p|k$},\\
2 & \text{if $p \nmid k$}.
\end{cases}
\end{equation*}
The Bateman--Horn constant is
\begin{align}
C(f_1,f_2;x)
&= \prod_p \left(1 - \frac{1}{p}\right)^{-2} \left(1 - \frac{\omega_f(p)}{p}\right) \nonumber \\
&= \prod_{p|k} \left(1 - \frac{1}{p}\right)^{-1} \prod_{p \nmid k} \left(1 - \frac{1}{p}\right)^{-2} \left(1 - \frac{2}{p}\right) \nonumber \\
&= \prod_{p|k} \frac{p}{p-1} \prod_{p \nmid k} \frac{p(p-2)}{(p-1)^2} . \label{eq:PPBH}
\end{align}
To highlight the dependence on $k$ and match the historically established notation in
the twin prime setting (Section \ref{Section:Twin}),
we denote the preceding constant by $2C_k$; that is,
\begin{equation}\label{eq:Ck}
C_k = \prod_{\substack{p|k \\ p \geq 3}}  \frac{p}{p-1} \prod_{p \nmid k} \frac{p(p-2)}{(p-1)^2}.
\end{equation}
We do not define $C_k$ for odd $k$; this would be pointless since for each odd 
$k$ there is at most one prime pair $p,p+k$.
Since $\sum_p 1/p^2$ converges,
the infinite product \eqref{eq:Ck}  that defines $C_k$ converges absolutely since
\begin{equation}\label{eq:pp21p12}
\frac{p(p-2)}{(p-1)^2} = 1 - \frac{1}{(p-1)^2}.
\end{equation}
Numerical approximations for $C_2,C_4,\ldots,C_{150}$ are given in Table \ref{Table:CkList}.
If $\pi_k(x)$ denotes the number of primes $p \leq x$ for which $p+k$ is prime,
then the Bateman--Horn conjecture predicts that
\begin{equation*}
\pi_k(x) \, \sim \, 2C_k \int_2^x \frac{dt}{(\log t)^2} \, \sim \, \frac{2C_k x}{(\log x)^2}.
\end{equation*}

\begin{table}\small
\begin{equation*}
\begin{array}{cl|cl|cl|cl|cl}
k & \multicolumn{1}{c|}{C_k} & k & \multicolumn{1}{c|}{C_k} & k & \multicolumn{1}{c|}{C_k} & k & \multicolumn{1}{c|}{C_k} & k & \multicolumn{1}{c}{C_k} \\
\hline
 2 & 0.660162 & 32 & 0.660162 & 62 & 0.682926 & 92 & 0.691598 & 122 & 0.671351 \\
 4 & 0.660162 & 34 & 0.704173 & 64 & 0.660162 & 94 & 0.674832 & 124 & 0.682926 \\
 6 & 1.32032 & 36 & 1.32032 & 66 & 1.46703 & 96 & 1.32032 & 126 & 1.58439 \\
 8 & 0.660162 & 38 & 0.698995 & 68 & 0.704173 & 98 & 0.792194 & 128 & 0.660162 \\
 10 & 0.880216 & 40 & 0.880216 & 70 & 1.05626 & 100 & 0.880216 & 130 & 0.960235 \\
 12 & 1.32032 & 42 & 1.58439 & 72 & 1.32032 & 102 & 1.40835 & 132 & 1.46703 \\
 14 & 0.792194 & 44 & 0.733513 & 74 & 0.679024 & 104 & 0.720177 & 134 & 0.670318 \\
 16 & 0.660162 & 46 & 0.691598 & 76 & 0.698995 & 106 & 0.673106 & 136 & 0.704173 \\
 18 & 1.32032 & 48 & 1.32032 & 78 & 1.44035 & 108 & 1.32032 & 138 & 1.3832 \\
 20 & 0.880216 & 50 & 0.880216 & 80 & 0.880216 & 110 & 0.978018 & 140 & 1.05626 \\
 22 & 0.733513 & 52 & 0.720177 & 82 & 0.677089 & 112 & 0.792194 & 142 & 0.669729 \\
 24 & 1.32032 & 54 & 1.32032 & 84 & 1.58439 & 114 & 1.39799 & 144 & 1.32032 \\
 26 & 0.720177 & 56 & 0.792194 & 86 & 0.676263 & 116 & 0.684612 & 146 & 0.66946 \\
 28 & 0.792194 & 58 & 0.684612 & 88 & 0.733513 & 118 & 0.671744 & 148 & 0.679024 \\
 30 & 1.76043 & 60 & 1.76043 & 90 & 1.76043 & 120 & 1.76043 & 150 & 1.76043 \\
\end{array}
\end{equation*}
\caption{Numerical approximations of the constants $C_k$ based upon the first $1{,}000{,}000$
terms of the product \eqref{eq:PPBH}.}
\label{Table:CkList}
\end{table}

There are several important observations to make.  
\begin{itemize}
\item The conjectured rate of growth in $\pi_k$ depends only upon the constant $C_k$.
Furthermore, $C_k$ depends only upon the primes that divide $k$.

\item In light of \eqref{eq:pp21p12},
an examination of \eqref{eq:PPBH} reveals that $C_k$ is minimized when $k$ is a power of two,
in which case $C_2 = C_4 = C_8 = C_{16} = \cdots  \approx 0.660162$.

\item $\lim_{p\to\infty} C_{2p} = C_2$.  That is, $C_k$ can be made arbitrarily close 
to the twin primes constant $C_2$ by letting $k=2p$, in which $p$ is a sufficiently
large prime.

\item $C_k$ can be made arbitrarily large by selecting $k$ to have
sufficiently many small prime factors.  The first factor in \eqref{eq:PPBH} is
\begin{equation*}
\prod_{\substack{p|k\\ p \geq 3}} \frac{p}{p-1}
=\prod_{\substack{p|k\\ p \geq 3}} \left(1 + \frac{1}{p-1} \right).
\end{equation*}
If $k$ is the product of the first $n$ primes (that is, $k$ is the $n$th \emph{primorial} $p_n\#$),
then the preceding diverges as $n \to \infty$.
\end{itemize}


The patterns predicted by the Bateman--Horn conjecture
are evident in Table \ref{Figure:Pis}, which provides the numerical values of $\pi_k(10^n)$ for several $k$
and $n=2,3,\ldots,8$.  For example, Table \ref{Table:CkList} suggests that
primes $p$ for which $p+30$ is prime should be about
\begin{equation*}
\frac{1.76043}{0.660162}
\approx 2.6667
\end{equation*}
times more numerous than twin primes.  Among the first $10^8$ primes, Table \ref{Figure:Pis}
gives the proportion
\begin{equation*}
\frac{17{,}331{,}689}{6{,}497{,}407}
\approx 2.66748.
\end{equation*}
The agreement is remarkable.

\begin{table}\small
\begin{equation*}
\begin{array}{c|>{\columncolor{blue!10}}c>{\columncolor{blue!10}}c>{\columncolor{green!10}}c>{\columncolor{blue!10}}c>{\columncolor{yellow!10}}c>{\columncolor{green!10}}c>{\columncolor{orange!10}}c}
n & \pi_2(p_{10^n}) & \pi_4(p_{10^n}) & \pi_6(p_{10^n}) & \pi_8(p_{10^n}) & \pi_{10}(p_{10^n}) & \pi_{12}(p_{10^n}) & \pi_{30}(p_{10^n}) \\
\hline
2 & 25 & 27 & 48 & 24 & 33 & 48 & 61 \\
3 & 174 & 170 & 343 & 178 & 230 & 340 & 456 \\
4 & 1{,}270 & 1{,}264 & 2{,}538 & 1{,}303 & 1{,}682 & 2{,}515 & 3{,}450 \\
5 & 10{,}250 & 10{,}214 & 20{,}472 & 10{,}336 & 13{,}653 & 20{,}462 & 27{,}434 \\
6 & 86{,}027 & 85{,}834 & 170{,}910 & 85{,}866 & 114{,}394 & 171{,}618  & 228{,}548 \\
7  & 738{,}597 & 738{,}718 & 1{,}477{,}321 & 738{,}005 & 984{,}809 & 1{,}477{,}496 & 1{,}970{,}049 \\
8  & 6{,}497{,}407 & 6{,}496{,}372 & 12{,}992{,}625 & 6{,}497{,}273 & 8{,}667{,}364 &
12{,}994{,}918 & 17{,}331{,}689\\
\end{array}
\end{equation*}
\normalsize
\caption{Values of the counting functions $\pi_k(x)$ at $p_{10^n}$, the $10^n$th prime.
The asymptotic predictions of Bateman--Horn conjecture are identical
for $\pi_2$, $\pi_4$, and $\pi_8$ (blue), and for $\pi_6$ and $\pi_{12}$ (green).
The computations appear to corroborate this.  
}
\label{Figure:Pis}
\end{table}

\subsection{Sophie Germain primes}

A prime number $p$ is a \emph{Sophie Germain prime} if $2p+1$ is also a prime. Such primes were first introduced and investigated by the legendary French mathematician, physicist and philosopher Marie-Sophie Germain (1776--1831) in the course of her work on some early cases of Fermat's Last Theorem; see 
\cite[Sect.5.5.5]{shoup} for further information. 

If $p$ is a Sophie Germain prime, then $2p+1$ is the corresponding \emph{safe prime}. 
This terminology reflects the usefulness of such primes in cryptography. Specifically, the famous RSA (Rivest--Shamir--Adleman) cryptosystem is an asymmetric cryptoscheme using a public key to encrypt a message and a private key to decrypt it \cite{crypto}. 
The public key is a product of two large prime numbers
(for example, a product of two safe primes)
and the hardness of a hostile attack is based on the difficulty of factoring such a product.  Factorization is especially difficult if 
the primes in question are of comparable size.
Cryptographic applications provide a strong modern motivation for studying such prime numbers, and it is conjectured that there are infinitely many Sophie Germain (and hence safe) primes. This conjecture is currently open, and the largest Sophie Germain prime known has $51780$ digits \cite{jarai}.

The search for Sophie Germain primes can be rephrased in the language of the Bateman--Horn conjecture. Let $f_1(t) = t$ and $f_2(t) = 2t+1$.  
Then $p$ is a Sophie Germain prime if and only if $f_1(p)$ and $f_2(p)$ are simultaneously prime. 
The infinitude of these primes follows from the Bateman--Horn conjecture, which
also provides an asymptotic estimate on their counting function. 
The polynomial
\begin{equation*}
f(t) = f_1(t) f_2(t) = t(2t+1)
\end{equation*}
does not vanish identically modulo any prime since $f(1) \equiv 1 \pmod{2}$
and $f$ has at most two roots modulo any odd prime.  Since $f$ vanishes at $0$ and 
$(p-1)/2$ for every odd prime $p$, we deduce that
\begin{equation*}
\omega_f(p) \ = \ 
\begin{cases}
1 & \text{if $p = 2$},\\
2 & \text{if $p$ is odd}.
\end{cases}
\end{equation*}
Thus,
\begin{equation*}
C(f_1,f_2) = 2 \prod_{p \neq 2} \left(1 - \frac{1}{p} \right)^{-2} \left( 1 - \frac{2}{p} \right) = 2 \prod_{p \neq 2} \frac{p (p-2)}{(p-1)^2}  \approx 1.32032\ldots.
\end{equation*}
Since $\deg f_1 = \deg f_2 = 1$, we obtain the estimate
\begin{equation*}
Q(f_1,f_1;x) \ \sim \ (1.32032\ldots) \int _{2}^{x}\frac{dt}{(\log t)^2}.
\end{equation*}
This is the same asymptotic prediction as in the twin-prime case (Section \ref{Section:Twin}).

\subsection{Cunningham chains}

A sequence $p_1,p_2,\ldots,p_n$ of primes is a \emph{Cunningham chain} 
of the first kind if $p_{i+1} = 2p_i+1$ for each $1 \leq i \leq n-1$ and of the second kind if $p_{i+1} = 2p_i-1$.
That is, every $p_i$ in a Cunningham chain of the first kind, except for $p_n$, is a Sophie Germain prime 
and every $p_i$, except for $p_1$, is a safe prime. 
Cunningham chains are named after a British mathematician Allan Joseph Champneys Cunningham (1842 -- 1928) who first introduced and studied them \cite{cunningham}.
Here are a few examples of Cunningham chains of the first kind
\begin{equation*}
(2, 5, 11, 23, 47),\ (3, 7),\ (89, 179, 359, 719, 1439, 2879),
\end{equation*}
and of the second kind
\begin{equation*}
(2, 3, 5),\ (7, 13),\ (19, 37, 73).
\end{equation*}
The longest known Cunningham chains have length $19$ \cite{augustin}. 

The existence of arbitrary long Cunningham chains follows from the first Hardy--Littlewood conjecture, and hence from Bateman--Horn conjecture. Indeed, let
\begin{equation*}
f_1(t) = t,\quad f_2(t) = 2f_1(t) \pm 1, \ldots,\quad f_k(t) = 2f_{k-1}(t) \pm 1,
\end{equation*}
then we need them all to be prime simultaneously. Bateman--Horn guarantees the existence of infinitely many such $k$-tuples and even gives an asymptotic estimate on the growth of their number 
that is analogous to the argument above for Sophie Germain primes. 
On the other hand, it has been proved that a Cunningham chain of infinite length cannot exist. 
Indeed, suppose for instance that odd primes $p_1,p_2,\dots$ form a Cunningham chain of the first kind.  Then
$$p_{i+1} = 2p_i + 1 = 2(2p_{i-1}+1)+1 = \dots = 2^i p_1 + \sum_{j=0}^{i-1} 2^j = 2^i p_1 + (2^i-1)$$
and hence $p_{i+1} \equiv 2^i - 1\ (\md p_1)$. On the other hand, Fermat's little theorem implies that
$$2^{p_1-1} -1 \equiv 0\ (\md p_1),$$
meaning that $p_{p_1}$ would be divisible by~$p_1$, and so cannot be prime. This implies that, in fact, a Cunningham chain 
starting with an odd prime $p_1$ cannot have more than~$p_1-1$ terms in it. If $p_1 = 2$, then the same argument can be applied
to the chain $p_2,p_3,\dots$. Further information about Cunningham chains and their use in cryptography can be found in \cite{buhler}.

\subsection{Green--Tao theorem}

One of the most spectacular results in twenty-first century number theory is the Green--Tao theorem \cite{GreenTao},
which asserts that the primes contain arbitrarily long arithmetic progressions.  That is, given $k \geq 1$ there is a $k$-term
arithmetic progression 
\begin{equation*}
b, \quad b+ a,\quad  b+2a,\ldots,\quad b+(k-1)a
\end{equation*}
of prime numbers.  For example, $5,11,17,23,29$ is a $5$-term arithmetic
progression of primes with $b = 5$ and $a = 6$.

Consider the $k$ linear polynomials
\begin{equation*}
f_1(t) = t, \qquad f_2(t) = t+a, \ldots,
\quad f_k(t) = t + (k-1)a,
\end{equation*}
each of which is obviously irreducible.
Let $f= f_1f_2\cdots f_k$ denote their product.
The congruence $f(t) \equiv 0 \pmod{p}$ is
\begin{equation*}
t(t+a)(t+2a)\cdots\big(t+(k-1)a\big) \equiv 0 \pmod{p}.
\end{equation*}
Thus,
\begin{equation*}
\omega_f(p) = 
\begin{cases}
1 & \text{if $p|a$},\\
\min\{k,p\} & \text{if $p \nmid a$}.
\end{cases}
\end{equation*}
If $p \leq k$ and $p\nmid a$, then $f$ vanishes identically modulo $p$.
Consequently, we require that $p|a$ for all primes $p \leq k$.
This suggests that we take $a = p_k\#$, the product of the first $k$ prime numbers.
Then
\begin{equation*}
Q(f_1,f_2,\ldots,f_k;x)
\ \sim \ C(f_1,f_2,\ldots,f_k) \int_2^x \frac{dt}{(\log t)^k},
\end{equation*}
in which
\begin{equation*}
C(f_1,f_2,\ldots,f_k)
= \prod_{n=1}^k \left(1 - \frac{1}{p_n} \right)^{-k+1}
\prod_{n=k+1}^{\infty} \left(1 - \frac{1}{p} \right)^{-k} \left(1 - \frac{k}{p} \right) 
\end{equation*}
is a nonzero constant.  This yields the following famous result
\cite[Thm.~1.1]{GreenTao}.

\begin{theorem}[Green--Tao, 2004]
For each positive integer $k$,
the prime numbers contain infinitely many arithmetic progressions of
length $k$.
\end{theorem}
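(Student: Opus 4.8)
The plan is to derive the theorem directly from the Bateman--Horn conjecture, building on the setup already assembled above. With $f_i(t) = t + (i-1)a$ for $i = 1, 2, \ldots, k$ and $a = p_k\#$, an integer $n$ satisfies ``$f_1(n), f_2(n), \ldots, f_k(n)$ are all prime'' precisely when $n, n+a, \ldots, n+(k-1)a$ is a $k$-term arithmetic progression of primes. Hence it suffices to prove that $Q(f_1, f_2, \ldots, f_k; x) \to \infty$ as $x \to \infty$: infinitely many admissible $n$ yield infinitely many such progressions. The whole argument therefore reduces to showing that the counting function is unbounded, and for that I would invoke the conjecture.

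Before doing so, the key step is to verify that the hypotheses of the conjecture actually hold for this family, since the choice $a = p_k\#$ is engineered for exactly this purpose. The $f_i$ are linear, hence irreducible, with positive leading coefficients and distinct from one another, so only the non-vanishing of $f = f_1 f_2 \cdots f_k$ modulo every prime needs checking; equivalently $\omega_f(p) < p$ for all $p$. I would split into two cases. If $p \mid a$, then $t + (i-1)a \equiv t \pmod{p}$ for every $i$, so $f(t) \equiv t^k \pmod{p}$ and $\omega_f(p) = 1 < p$. If $p \nmid a$, then $a$ is invertible modulo $p$; since the prime divisors of $a$ are exactly the first $k$ primes, such a $p$ exceeds $p_k$ and in particular $p > k$, so the roots $0, -a, -2a, \ldots, -(k-1)a$ are distinct modulo $p$ and $\omega_f(p) = k < p$. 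In every case $\omega_f(p) < p$, so there is no congruence obstruction. This verification is the real heart of the matter and the main place where the special value of $a$ is used.

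With admissibility in hand I would apply the Bateman--Horn conjecture to obtain the asymptotic displayed above. By Theorem~\ref{Theorem:ProductConverge} the product defining $C(f_1, f_2, \ldots, f_k)$ converges, and since every factor $(1 - 1/p)^{-k}(1 - \omega_f(p)/p)$ is a positive real (using $\omega_f(p) < p$), the partial products are positive and the limit $C$ is strictly positive. Lemma~\ref{Lemma:Log} gives $\int_2^x dt/(\log t)^k \sim x/(\log x)^k \to \infty$, whence $Q(f_1, f_2, \ldots, f_k; x) \sim C\, x/(\log x)^k \to \infty$. This produces infinitely many $n$ at which all $f_i(n)$ are prime, which is exactly the assertion of the theorem.

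Finally, an honest caveat about what this argument does and does not accomplish. It establishes the theorem only conditionally, under the still-unproven Bateman--Horn conjecture. The unconditional Green--Tao theorem cited here is a far deeper result whose proof rests on entirely different machinery---Szemer\'edi's theorem combined with a transference principle realizing the primes as a dense subset of a pseudorandom set---none of which is available to us from Bateman--Horn. The purpose of the derivation above is thus to show that Green--Tao is predicted by, and fits neatly within, the Bateman--Horn framework, rather than to furnish an independent proof of it.
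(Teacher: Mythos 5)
Your derivation is correct and follows essentially the same route as the paper: the same polynomials $f_i(t)=t+(i-1)a$, the same choice $a=p_k\#$ to kill the congruence obstruction for primes $p\leq k$, and the same appeal to the Bateman--Horn asymptotic with a nonzero constant to conclude that $Q(f_1,\ldots,f_k;x)\to\infty$. Your explicit verification that $\omega_f(p)<p$ in both cases, and your closing caveat that this is only a conditional derivation (the actual Green--Tao proof being unconditional and entirely different), are welcome additions but do not change the approach.
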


\section{Limitations of the Bateman--Horn conjecture}\label{Section:Limitations}
Although we have touted the Bateman--Horn conjecture as ``one conjecture to rule them all,'' it has its limitations.  We briefly discuss a number of topics in number theory that the conjecture
does not appear to address.

First of all, the Bateman--Horn conjecture is a statement about the overall distribution of prime numbers.  It says little about what happens on small scales.
For example, it does not appear to resolve 
Legendre's conjecture (for each $n$ there is a prime between $n^2$ and $(n+1)^2$).
Bateman--Horn also does not tell us much about the additive properties of the prime numbers.  For instance, it does not seem to imply the Goldbach conjecture (every even number greater than $4$ 
is the sum of two odd prime numbers).

The Bateman--Horn conjecture does an excellent job predicting the asymptotic
distribution of primes generated by families of polynomials.
However, it does not tell us much about primes generated by non-polynomial functions.
For example, it has nothing to say about the number of primes of the form
$2^{2^n}+1$ (Fermat primes) or $2^n-1$ (Mersenne primes).

The conjecture has little to say about diophantine equations, such as the Fermat equation $x^n+y^n=z^n$ \cite{StewartTall}
or the Catalan equation $x^n-y^m = 1$ \cite{Mihailescu}.
For example, the Bateman--Horn conjecture appears to have little overlap with the
$abc$-conjecture and its applications; see \cite[Ch.~11]{luca} or \cite[Ch.~12]{bombieri} for a detailed overview of the far-reaching $abc$-conjecture and its numerous connections.

The Bateman--Horn conjecture provides asymptotics for counting functions
related to primes, but does not bound the size of the error terms.  For example, it implies the prime number theorem
(Theorem \ref{Theorem:PNT}), which asserts that $\pi(x) \sim \li(x)$.
However, BH does not tell us about $|\pi(x) - \li(x)|$.  On the other hand, Schoenfeld \cite{Schoenfeld}
proved that the Riemann hypothesis yields
\begin{equation*}
|\pi(x) - \operatorname{li}(x)| < \frac{1}{8 \pi}\sqrt{x} \log x,\qquad x\geq 2{,}657.
\end{equation*}
Thus, the Riemann hypothesis implies the prime number theorem with a well-controlled error term.
Serge Lang says:
\begin{quote}\small
I regard it as a major problem to give an estimate for the error term in the Bateman--Horn conjecture similar to the Riemann hypothesis.
This could possibly lead to a vast reconsideration of the context for Riemann's explicit formulas. 
\cite[p.11]{Lang}.
\end{quote}

Number theory is one of the central branches of mathematics and connects with analysis, algebra, combinatorics, and many other fields.
It has enjoyed a great number of exciting advances and breakthroughs in recent years, several of which have led to Fields medals and other prestigious awards. 
It also contains a great number of difficult and deep open problems and conjectures.
To a large extent these influence the course of modern mathematics. 
Some problems, like the Riemann hypothesis, the $abc$-conjecture, the twin prime conjecture, or the Goldbach conjecture 
are well known and rightfully celebrated by the mathematical community. 
Others, like the Bateman--Horn conjecture, although of equally great stature, are not as well known. 
The goal of this paper was to present an overview of this important problem, its connections, and its consequences. 
It is our hope that we have convinced the reader that the Bateman--Horn conjecture deserves to be ranked among the most pivotal unproven conjectures in the theory of numbers.

\bibliographystyle{amsplain} 
\bibliography{BHCHHA}

\end{document}